\newtheorem{theorem}{Theorem}[section]
\newtheorem{proposition}[theorem]{Proposition}
\newtheorem{lemma}[theorem]{Lemma}
\newtheorem{corollary}[theorem]{Corollary}
\theoremstyle{definition}
\newtheorem{definition}[theorem]{Definition}
\newtheorem{remark}[theorem]{Remark}
\newtheorem{example}[theorem]{Example}
\numberwithin{equation}{section}
\newcommand{\C}{\mathbb{C}}
\newcommand{\R}{\mathbb{R}}
\newcommand{\Q}{\mathbb{Q}}
\newcommand{\Z}{\mathbb{Z}}
\newcommand{\g}{\mathfrak{g}}
\begin{document} 

\title[Higgs bundles and flat connections on Sasakian manifolds, II]{Higgs bundles and 
flat connections over compact Sasakian manifolds, II: quasi-regular bundles}

\author[I. Biswas]{Indranil Biswas}

\address{Department of Mathematics, Shiv Nadar University, NH91, Tehsil
Dadri, Greater Noida, Uttar Pradesh 201314, India}

\email{indranil29@gmail.com, indranil@math.tifr.res.in}

\author[H. Kasuya]{Hisashi Kasuya}

\address{Department of Mathematics, Graduate School of Science, Osaka University, Osaka,
Japan}

\email{kasuya@math.sci.osaka-u.ac.jp}

\subjclass[2010]{53C43, 53C07, 32L05, 14J60, 58E15}

\keywords{Sasakian manifold, quasi-regular bundle, Higgs bundle, numerically
flat bundle}

\begin{abstract}
In this continuation of \cite{BK} and \cite{BM} we investigate the non-abelian Hodge 
correspondence on compact Sasakian manifolds with emphasis on the quasi-regular case.
We introduce on quasi-regular Sasakian manifolds the notions of quasi-regularity and 
regularity of basic vector bundles. These notions are useful in relating the vector bundles
over a quasi-regular Sasakian manifold with the orbibundles over the orbifold defined
by the orbits of the Reeb foliation of the quasi-regular Sasakian manifold. We note that
the non-abelian Hodge correspondence on any quasi-regular Sasakian manifolds gives a canonical 
correspondence between the semi-simple representations of the orbifold fundamental group 
and the Higgs orbibundles on locally cyclic complex orbifold admitting Hodge metrics. 
Under the assumption of
quasi-regularity of Sasakian manifolds and vector bundles, we extend this 
correspondence to one between the flat bundles and the basic Higgs bundles. We also prove 
a Sasakian analogue of the characterization of numerically flat bundles given by Demailly, 
Peternell and Schneider.
\end{abstract}

\maketitle

\tableofcontents

\section{Introduction}

In \cite{BK}, the authors proved a non-abelian Hodge correspondence on compact Sasakian 
manifolds as an odd-dimensional analogue of the ground-breaking works of Corlette 
\cite{Cor} and Simpson on bundles on compact K\"ahler manifolds
\cite{Si1, Si2}. More precisely, it was shown in \cite{BK} that the harmonic metrics 
on the semi-simple flat bundles over compact Sasakian manifolds provide basic Higgs bundles 
(called b-Higgs bundles in the main text of this paper) over the same compact Sasakian manifold, 
and furthermore, stable basic Higgs bundles over any compact Sasakian manifold admit basic Hermitian 
metrics satisfying an Hermitian--Yang--Mills type equation. Consequently, for a compact 
Sasakian manifold, there is an equivalence between the category of semi-simple flat vector 
bundles on it and the category of polystable Higgs bundles on it with trivial first and 
second basic Chern classes. The purpose of this paper is to study this correspondence in 
the context of the geometric features of Sasakian manifolds.

Sasakian geometry is often qualified as an odd-dimensional ``analogy" of K\"ahler 
geometry. But, recent researches testify that Sasakian geometry is not just an analogue. For example, 
Sasakian geometry gives new aspects of Einstein-metrics (see e.g. \cite{BoG1, BGN, BGK} and 
\cite[Chapter 11]{BoG}). It would not be unreasonable
to hope that our correspondence in \cite{BK} contributes in 
giving new aspects of representations of fundamental groups and Higgs bundles in the
Sasakian context.

Through a series of examples, we provide a comparison between the K\"ahler case and the
Sasakian case. Consider the simplest non-trivial example observed by Hitchin,
\cite{Hit}, on a compact Riemann surface $X$ of genus $g\,\ge\, 2$, where the
Higgs bundle $(E,\,\theta)$ consists of
\begin{itemize}
\item the holomorphic vector bundle $E\,=\,\Omega_{X}^{\frac{1}{2}}\oplus
\Omega_{X}^{-\frac{1}{2}}$ with $\Omega_{X}^{\frac{1}{2}}$
being a square-root of the canonical bundle $\Omega_{X}$ of $X$ (namely, a
theta characteristic), and 
\item $\theta\,=\,\left(
\begin{array}{cc}
0&0 \\
1 & 0
\end{array}
\right)$ as in the proof of \cite[Corollary 4.23]{Hit}.
\end{itemize}
This $(E,\,\theta)$ corresponds to a representation $\Gamma\,:=\,\pi_1(X)\, \longrightarrow\, {\rm 
SL}_{2}(\R)$ which is one of the $2^{2g}$ liftings of the natural representation of 
$\Gamma$ into ${\rm PSL}_{2}(\R)$ given by the
uniformization theorem applied to $X$. For the Sasakian case, we consider the universal 
covering $\widetilde{\rm SL}_{2}(\R)$ of ${\rm PSL}_{2}(\R)$. Let $ 
\widetilde{\Gamma}\,\subset\, \widetilde{\rm SL}_{2}(\R)$ be a discrete subgroup such that the 
corresponding quotient $\widetilde{\Gamma}\backslash\widetilde{\rm SL}_{2}(\R)$ is compact. 
Since ${\rm SL}_{2}(\R)$ is a double covering of ${\rm PSL}_{2}(\R)$, we have the canonical 
representation $ \widetilde{\Gamma}\,\longrightarrow\, {\rm SL}_{2}(\R)$. As an example of 
the correspondence in \cite{BK}, we give a correspondence between this representation of
$\widetilde{\Gamma}$ and an explicit basic Higgs bundle over 
the compact Sasakian manifold $\widetilde{\Gamma}\backslash \widetilde{\rm SL}_{2}(\R)$ whose
fundamental group is $\widetilde{\Gamma}$ (see Examples \ref{exsl1}, \ref{Exsl22}, 
\ref{PSL2g} and \ref{PSL2g2}).

For the fundamental group $\Gamma\,\subset\,{\rm PSL}_{2}(\R)$
of a compact hyperbolic (orbifold) Riemann surface $X$, we can take $\widetilde{\Gamma}$ to be
the inverse image of $\Gamma$ for the natural projection of $\widetilde{\rm SL}_{2}(\R)$
to ${\rm PSL}_{2}(\R)$. In this particular case we have $\widetilde{\Gamma}
\backslash \widetilde{\rm SL}_{2}(\R)\,=\,\Gamma \backslash{\rm PSL}_{2}(\R)$.

Undoubtedly, the notion of quasi-regularity is the most important condition on compact Sasakian 
manifolds. We recall that a compact Sasakian manifold is quasi-regular if every orbit of 
its canonical foliation (Reeb foliation) is closed. In this paper, we define the 
quasi-regularity, and also regularity, of basic vector bundles over quasi-regular 
compact Sasakian manifolds; this is carried out in Section \ref{q-rv}. These notions contribute 
in refining both the set-up and the arguments in \cite{BM}.

We notice that the orbit 
space of the Reeb foliation on a quasi-regular compact Sasakian manifold can be seen as a 
locally cyclic complex orbifold admitting a K\"ahler metric. For example, for the above 
example of $\Gamma \backslash{\rm PSL}_{2}(\R)$, the orbifold Riemann surface 
$\Gamma\backslash H$ is the orbit
space of the Reeb foliation. Any Sasakian structure has the realization as a smoothing of a 
locally cyclic Hodge orbifold \cite{BoG}. One of our motivations for defining the 
quasi-regularity and the regularity of basic vector bundles is to study the K\"ahler 
orbifolds in terms of Sasakian geometry. Orbibundles over the orbit space of a 
quasi-regular compact Sasakian manifold can be seen as the regular basic bundles over the 
quasi-regular compact Sasakian manifold. In view of this, as a consequence of our results 
in \cite{BK}, we can show the existence of Hermite-Einstein metrics on Higgs orbibundles 
over orbifolds. This enables us to obtain a correspondence between the representations
of the orbifold fundamental group and the Higgs bundles over the orbifold; it may
be mentioned that this is achieved 
directly without treating orbifold singularities (see Theorem \ref{orbiHE} and Corollary 
\ref{orbirephi}).

Unlike the usual Hermitian metrics on holomorphic vector bundles, the existence of basic 
Hermitian metrics on basic vector bundles does not hold in general. This aspect causes several 
difficulties. By a standard argument, our assumption of quasi-regularity of basic vector 
bundles over quasi-regular compact Sasakian manifolds has an advantage regarding the question 
of the existence of basic Hermitian metrics, in fact, the quasi-regularity of basic vector 
bundles ensures that a basic Hermitian metric exists (see Lemma \ref{lemSinv}). This advantage 
ensues that on quasi-regular compact Sasakian manifolds, under the assumption of 
quasi-regularity, we obtain a correspondence between the not-necessarily semi-simple flat 
bundles and a certain class of not-necessarily polystable basic Higgs bundles (see Theorem 
\ref{EQQUA} for a precise statement); we note that this is an analogue of \cite[Lemma 
3.5]{Si2}.

Numerically flat bundles were introduced by Demailly, Peternell and Schneider in 
\cite{DPS}. Under the assumption of quasi-regularity, we also obtain an analogue of the 
characterization of the numerically flat bundles given in \cite{DPS} (see Theorem 
\ref{nflatst} and Corollary \ref{flatnflat}).

\section{Preliminaries on Foliations}
\subsection{Basic vector bundles}

Let $(M,\, \mathcal F)$ be a $C^\infty$ foliated manifold such that the co-rank of 
${\mathcal F}$ is $2q$ (so the dimension of the leaves of $\mathcal F$
is $\dim M-2q$). This foliation $(M, \mathcal F)$ is
called {\em transversely holomorphic} if 
there is a foliation atlas $\{U_{\alpha}\}$ together with local submersions $f_{\alpha}\,:\, 
U_{\alpha}\,\longrightarrow\,\C^{q}$, where each $U_{\alpha}\, \subset\, M$
is an open subset with $\bigcup_\alpha U_{\alpha}\,=\, M$ such that
the transition functions 
$$\tau_{\alpha\beta}\,:\,f_{\beta}(U_{\alpha}\cap U_{\beta}) 
\,\longrightarrow\,f_{\alpha}(U_{\alpha}\cap U_{\beta})$$ satisfy the relations 
$f_{\alpha}\,=\,\tau_{\alpha\beta}f_{\beta}$ with each $\tau_{\alpha\beta}$ being a 
biholomorphism. Let $T\mathcal F\, \subset\, TM$ be the tangent bundle of the foliation 
$\mathcal F$, in other words, $T\mathcal F$ is the $C^\infty$ distribution that gives
$\mathcal F$, and let $$N{\mathcal F}\,=\,TM/T{\mathcal F}$$ be the normal bundle of the
foliation. Then we have the canonical decomposition
\begin{equation}\label{nd}
N{\mathcal F}_{\C}\,=\, N{\mathcal F}\otimes_{\mathbb R}{\C}
\,=\,N^{1,0}{\mathcal F}\oplus N^{0,1}{\mathcal F}
\end{equation}
for which $\overline{N^{1,0}{\mathcal F}}\,=\,N^{0,1}{\mathcal F}$.

On a smooth manifold $M$, if we have a sub-bundle ${\mathcal G}\,\subset\, TM_{\C}
\,=\, TM\otimes_{\mathbb R}{\C}$ of complex co-dimension $q$ so that
\begin{itemize}
\item ${\mathcal G}+\overline{{\mathcal G}}\,=\,TM_{\C}$, and

\item ${\mathcal G}$ is involutive,
\end{itemize}
then $M$ admits a transversely holomorphic foliation $\mathcal F$ such that $$T{\mathcal 
F}_{\C}\,=\, {\mathcal G}\cap \overline{\mathcal G}$$ and $N^{1,0}{\mathcal 
F}\,=\,{\mathcal G}/({\mathcal G}\cap \overline{\mathcal G})$ (see \cite{DK}).

A differential form $\omega$ on $M$ is called {\em basic} if the two equations
\begin{equation}\label{bafo}
i_{X}\omega\,=\,0\,=\, {\mathcal L}_{X}\omega
\end{equation}
hold for any locally defined section $X\,\in\, T\mathcal F$. Since
${\mathcal L}_{X}\,=\, di_X+i_Xd$, the condition in \eqref{bafo} is
equivalent to the following condition:
$$
i_{X}\omega\,=\,0\,=\, i_{X}d\omega.
$$ 
We denote by $A^{\ast}_{B}(M)$ the subspace of basic 
forms in the de Rham complex $A^{\ast}(M)$.
Then $A^{\ast}_{B}(M)$ is actually a sub-complex of the de Rham complex $A^{\ast}(M)$.
Denote by $H_{B}^{\ast}(M)$
the cohomology of the basic de Rham complex $A^{\ast}_{B}(M)$. 
Suppose $(M,\,\mathcal F)$ is transverse holomorphic. Corresponding to the decomposition $N{\mathcal F}_{\C}
\,=\,N^{1,0}{\mathcal F}\oplus N^{0,1}{\mathcal F}$ in \eqref{nd}, we have the bigrading
$$A^{r}_{B}(M)_{\C}\,=\,\bigoplus_{p+q=r} A^{p,q}_{B}(M)$$ as well as the decomposition of the exterior 
differential $$d_{\vert A^{r}_{B}(M)_{\C}}=\partial_{B}+\overline\partial_{B},$$ on $A^{r}_{B}(M)_{\C}$, so that $$\partial_{B}:A^{p,q}_{B}(M)
\,\longrightarrow\, A^{p+1,q}_{B}(M)\,\ \text{ and }\, \
\overline\partial_{B}:A^{p,q}_{B}(M)\,\longrightarrow\, A^{p,q+1}_{B}(M)\, .$$

\begin{definition}\label{dbvb}
A {\em basic vector bundle} $E$ over a foliated manifold $(M,\,\mathcal F)$ is a 
$C^\infty$ vector bundle over $M$ which has local trivializations with respect to an open 
covering $M\,=\,\bigcup_{\alpha} U_{\alpha}$
satisfying the condition that each transition 
function $f_{\alpha\beta}\,:\,U_{\alpha}\cap U_{\beta}\,\longrightarrow\, {\rm 
GL}_{r}(\C)$ is basic on $U_{\alpha}\cap U_{\beta}$, in other words, $f_{\alpha\beta}$ is 
constant on the leaves of the foliation $\mathcal F$.

Assume that $(M,\,\mathcal F)$ is transversely holomorphic. We say that a basic vector 
bundle $E$ is {\em holomorphic} if we can take each transition function $f_{\alpha\beta}$ 
to be transversely holomorphic.
\end{definition}

\begin{definition}
For a basic vector bundle $E$, a differential form $\omega\,\in\, A^{\ast}(M,\,E)$ with 
values in $E$ is called basic if $\omega$ is basic on every $U_{\alpha}$
(see Definition \eqref{dbvb}), meaning 
$$\omega\big\vert_{U_{\alpha}}\,\in\, A^{\ast}_{B}(U_{\alpha})\otimes \C^{r}\, ,$$ for 
every $\alpha$, in terms of the trivialization
of $E\big\vert_{U_\alpha}$ as done in Definition \ref{dbvb}.
\end{definition}

Let $$A^{\ast}_{B}(M,\, E)\,\subset\,A^{\ast}(M,\,E)$$ denote the 
subspace of basic forms in the space $A^{\ast}(M,\,E)$ of differential forms with
values in $E$. Corresponding to the
decomposition $N{\mathcal F}_{\C}\,=\,N^{1,0}{\mathcal F}\oplus N^{0,1}{\mathcal F}$
in \eqref{nd}, we have the bigrading $$A^{r}_{B}(M,\,E)\,=\
\bigoplus_{p+q=r} A^{p,q}_{B}(M,\, E)\, .$$
If $E$ is holomorphic, we can extend the operator $\overline\partial_{B}$ to an operator
$$\overline\partial_{E}\,:\,A^{p,q}_{B}(M,\,E)
\,\longrightarrow\, A^{p,q+1}_{B}(M,\,E)$$ satisfying the equation
$\overline\partial_{E}\circ\overline\partial_{E}\,=\,0$.
Conversely, if $$\nabla\,:\,A^{\ast}(M,\,E)\,\longrightarrow\,
A^{\ast+1}(M,\,E)$$
is a connection on $E$ such that for the decomposition $$\nabla\, =\,\nabla^{\prime} 
+\nabla^{\prime\prime},$$ where $\nabla^{\prime}\,:\,A^{p,q}_{B}(M,\,E) \,\longrightarrow\, 
A^{p+1,q}_{B}(M,\,E)$ and $\nabla^{\prime\prime}\,:\,A^{p,q}_{B}(M,\,E)\,\longrightarrow\, 
A^{p,q+1}_{B}(M,\,E)$, we have $$\nabla^{\prime\prime}\circ\nabla^{\prime\prime}\,=\,0\, ,$$ then 
there exists a unique holomorphic bundle structure on $E$ such that 
$\nabla^{\prime\prime}\,=\,\overline\partial_{E}$, like in the case where the base is a complex 
manifold (see \cite[Proposition 3.7]{Ko}).

\subsection{Flat partial connections}

Let $(M,\,\mathcal F)$ be a foliated manifold and $E$ a ${\mathcal C}^{\infty}$ vector bundle over $M$.
Suppose that $E$ is a basic vector bundle over $(M,\,{\mathcal F})$.
Then, we can define the canonical differential operator
\begin{equation}\label{ej}
D\,:\, {\mathcal C}^{\infty}(E)\,\longrightarrow\,
{\mathcal C}^{\infty}(E\otimes T{ \mathcal F}^{\ast})
\end{equation}
such that for any $X\,\in\, T\mathcal F$, any smooth function $f$ on $M$
and any basic section $s$ of $E$, we have
\[
D_{X}(f s)\,=\,X(f) s.
\]
A section $s$ of $E$ is basic if and only if $Ds\,=\,0$.
In particular, a Hermitian metric $h$ on $E$ is basic if and only if $D$ preserves $h$.
Extend $D$ in \eqref{ej} to $$D\,:\, {\mathcal C}^{\infty}(E\otimes \bigwedge\nolimits^{k} T{\mathcal
F}^{\ast})\,\longrightarrow\,{\mathcal C}^{\infty}(E\otimes \bigwedge\nolimits^{k+1}
T{ \mathcal F}^{\ast});$$
since the sub-bundle $T{ \mathcal F}
\,\subset\, TM$ is involutive, we have $D\circ D\,=\,0$. Conversely, suppose
that we have a flat partial ${\mathcal F}$-connection, i.e., a linear
differential operator $$D\,:\, {\mathcal C}^{\infty}(E)
\,\longrightarrow\,
{\mathcal C}^{\infty}(E\otimes T{ \mathcal F}^{\ast})$$ such that
\begin{itemize}
\item for any $X\in T\mathcal F$, and any smooth function $f$ on $M$, the equation
\[ D_{X}(f s)\,=\,fD_{X} s+ X(f) s
 \]
holds for all smooth sections $s$ of $E$, and

\item if we extend $D$ to $D\,:\, {\mathcal C}^{\infty}(E\otimes \bigwedge^{k} T{ \mathcal 
F}^{\ast}) \, \longrightarrow\, {\mathcal C}^{\infty}(E\otimes 
\bigwedge^{k+1} T{ \mathcal F}^{\ast})$, then $D\circ D\,=\,0$.
\end{itemize}
Then, by Rawnsley's theorem in \cite{Ra}, the vector bundle $E$ is locally trivialized by 
sections $s$ satisfying $Ds\,=\,0$. Thus, taking such trivializations, the corresponding
transition functions are basic, and consequently, we obtain a basic vector bundle structure on $E$. 
Thus, basic vector bundle structures on $E$ correspond to flat partial ${\mathcal 
F}$-connections on $E$.

Suppose $(M,\,\mathcal F)$ is transverse holomorphic. Take the sub-bundle ${\mathcal G}\,\subset\, TM_{\C}$ such that $T{\mathcal F}_{\C}\, =\, 
{\mathcal G}\cap \overline{\mathcal G}$ and
$N^{1,0}{\mathcal F}\,=\,{\mathcal G}/({\mathcal G}\cap \overline{\mathcal G})$. Then by using 
Rawnsley's theorem, \cite{Ra}, in a similar fashion, basic holomorphic vector bundle structures on $E$
correspond to the flat partial $\overline{ \mathcal G}$-connections on $E$.

\subsection{Connections and basic Chern classes}

Let $E$ be a complex basic vector bundle over $M$.
Consider a connection operator $$\nabla\,:\, A^{\ast}_{B}(M,\,E)
\,\longrightarrow\, A^{\ast+1}_{B}(M,\,E)$$ satisfying the equation
\[\nabla(\omega s)\,=\, (d\omega )s+(-1)^{r}\omega\wedge \nabla s
\]
for $\omega\,\in\, A^{r}_{B}(M)$ and $s\,\in\,
A^{0}_{B}(M,\,E)$.
This is a usual connection operator $$\nabla\,:\, A^{\ast}(M,\,E)
\,\longrightarrow\, A^{\ast+1}(M,\,E)$$
so that the restriction of it to $T{\mathcal F}$ is the canonical flat partial ${ \mathcal F}$-connection associated with the basic vector bundle structure on $E$.
Let $$R^{\nabla}\,= \,\nabla^2\,\in\, A_{B}^{2}({M,\, \rm End}(E))$$
be the curvature of $\nabla$. For
any $1\,\le\, i\,\le \,n$, define $c_{i}(E,\nabla) \,\in\, A^{2i}_{B}(M)$ by 
\[{\rm det}\left(I- \frac{R^{\nabla}}{2\pi\sqrt{-1}}\right)
\,=\,1+\sum_{i=1}^{n}c_{i}(E,\nabla) \, .
\]
Then, as the case of usual Chern-Weil theory, the cohomology class
$$c_{i, B}(E)\, \in\, H_{B}^{2i}(M)$$ of each
$c_{i}(E,\nabla)$ is actually independent of the choice of the connection $\nabla$
taking $A^{\ast}_{B}(M,\,E)$ to $A^{\ast+1}_{B}(M,\,E)$.

\subsection{Basic Higgs bundles}

A \textit{basic Higgs bundle} over $(M,\,\mathcal F)$ is a pair
$(E, \,\theta)$ consisting of a transversely holomorphic vector bundle $E$
on $(M,\,\mathcal F)$ and a section
$$\theta\,\in\, A^{1,0}_{B}(M,\,{\rm 
End}(E))$$ satisfying the following two conditions:
$$\overline\partial_{E}\theta \,=\,0\ \ \text{ and }
\ \ \theta\wedge \theta\,=\,0\, .$$
This section $\theta$ is called a Higgs field on $E$. 
We define the canonical operator $D^{\prime\prime}\,=\,\overline\partial_{E}+\theta$.
Then the above two equations imply that $D^{\prime\prime}D^{\prime\prime}\,=\,0$.

\section{Higgs bundles on Sasakian manifolds}\label{secHig}

In this section, we discuss the results proved in \cite{BK} and also
study some fundamental examples.

\subsection{Sasakian manifolds}

Let $M$ be a $(2n+1)$-dimensional real $C^\infty$ orientable manifold. Let $TM_{\mathbb 
C}\,=\, TM\otimes_{\mathbb R}{\mathbb C}$ be its complexified tangent bundle. The Lie 
bracket operation on the locally defined vector fields on $M$ extends to a Lie bracket 
operation on the locally defined $C^\infty$ sections of $TM_{\mathbb C}$. A complex
subbundle of $TM_{\mathbb C}$ whose sections are closed under the Lie bracket
operation is called \textit{integrable}.

A {\em CR-structure} on $M$ is an 
$n$-dimensional complex sub-bundle $T^{1,0}M$ of $TM_{\mathbb C}$ such that
$T^{1,0}M\cap \overline{T^{1,0}M}
\,=\,\{0\}$ and $T^{1,0}M$ is integrable. Given such a subbundle $T^{1,0}M$, there is 
a unique sub-bundle $S$ of rank $2n$ of the real tangent bundle $TM$ together with a
vector bundle homomorphism $I\,:\,S\,\longrightarrow\, S$ satisfying the following two conditions:
\begin{enumerate}
\item $I^{2}\,=\,-{\rm Id}_{S}$, and

\item $T^{1,0}$ is the $\sqrt{-1}$--eigenbundle of $I$ acting on $S\otimes{\mathbb C}$.
\end{enumerate}
The subbundle $\overline{T^{1,0}M}$ of $TM_{\mathbb C}$ will be denoted by $T^{0,1}M$.

A $(2n+1)$-dimensional manifold $M$ equipped with a triple $(T^{1,0}M,\, S,\, I)$ as above is 
called a {\em CR-manifold}. A {\em contact CR-manifold} is a CR-manifold $M$ with a contact 
$1$-form $\eta$ on $M$ such that $\ker\eta\,=\,S$. Let $\xi$ denote the Reeb vector field for the 
contact form $\eta$. On a contact CR-manifold, the above homomorphism $I$ extends to entire 
$TM$ by setting $I(\xi)\,=\,0$.

\begin{definition}
A contact CR-manifold $(M,\, (T^{1,0}M,\, S,\, I),\, (\eta,\, \xi))$ is a {\em 
strongly pseudo-convex CR-manifold} if the Hermitian form $L_{\eta}$ on $S_x$ defined by 
$$L_{\eta}(X,\,Y)\,=\,d\eta(X,\, IY),\ \ \, X,\,Y\,\in\, S_{x},$$
is positive definite for every point $x\,\in\, M$. 
\end{definition}

We recall that given any strongly pseudo-convex CR-manifold $(M, \,(T^{1,0}M,\, S, \,I),\, (\eta,\, \xi))$, there
is a canonical Riemannian metric $g_{\eta}$ on $M$ which is defined to be
\[g_{\eta}(X,Y)\,:=\, L_{\eta}(X,Y)+\eta(X)\eta(Y)\, ,\ \ X,\,Y\,\in \,T_{x}M\, , \ x\, \in\, M\, .
\] 

\begin{definition}
A {\em Sasakian manifold} is a strongly pseudo-convex CR-manifold $$(M, \,(T^{1,0},\, S,\, I),\,
(\eta,\, \xi))$$ satisfying the condition that
$$[\xi,\, {\mathcal C}^{\infty}(T^{1,0})]\,\subset\,{\mathcal C}^{\infty}(T^{1,0}).$$
In this case, the canonical metric $g_{\eta}$ is called the Sasakian metric.
\end{definition}

For the Sasakian metric $g_{\eta}$, the Reeb vector field $\xi$ is Killing and also $\vert 
\xi\vert\,=\,1$. For a Sasakian manifold $(M, \,(T^{1,0},\, S,\, I),\, (\eta,\, \xi))$, the 
Reeb vector filed $\xi$ induces a $1$-dimensional foliation ${\mathcal F}_{\xi}$ (Reeb 
foliation). The sub-bundle ${\mathcal G}_{\xi}\,=\,T^{1,0}\oplus T{\mathcal 
F}_{\xi}\subset TM_{\C}$ produces a transversely holomorphic
structure for the foliation ${\mathcal F}_{\xi}$. The form $d\eta$ is a transversely
K\"ahler structure.

\subsection{Hodge theory}\label{subshod}

Let $(M, \,(T^{1,0}M,\, S, \,I),\, (\eta,\, \xi))$ be a compact Sasakian manifold
of dimension $2n+1$.
For the usual Hodge star operator $\ast: A^{r}(M)\to A^{2n+1-r}(M)$
associated to the Sasakian metric $g_{\eta}$, 
we define the basic Hodge star operator $$\star_{\xi}:A^{r}_{B}(M)_{\mathbb C}
\,\longrightarrow\, A^{2n-r}_{B}(M)_{\mathbb C}$$ to be
$\star_{\xi}\omega\,=\,\ast(\eta\wedge \omega)$ for $\omega\,\in\, A^{r}_{B}(M)_{\mathbb C}$.
Define the Hermitian product
\[A^{r}_{B}(M)_{\mathbb C}\times A^{r}_{B}(M)_{\mathbb C}\,\ni\,
(\alpha,\,\beta)\,\longmapsto\, \int_{M} \eta\wedge\alpha\wedge \star_{\xi}\overline{\beta}
\, \in\, {\mathbb C}\, ,
\]
and we define
\begin{itemize}
\item $\delta_{B}\,:\,A^{r}_{B}(M)\,\longrightarrow\,
A^{r-1}_{B}(M)$,

\item $\partial_{B}^{\ast}\,:\,A^{p,q}_{B}(M)\,\longrightarrow\, A^{p-1,q}_{B}(M)$,

\item $\overline\partial_{B}^{\ast}\,:\,A^{p,q}_{B}(M)\,\longrightarrow\, A^{p,q-1}_{B}(M)$ and

\item $\Lambda\,:\, A^{p,q}_{B}(M)
\,\longrightarrow\, A^{p-1,q-1}_{B}(M)$ 
\end{itemize}
to be the formal adjoints of $d\,:\,A^{r}_{B}(M)\,\longrightarrow\, A^{r+1}_{B}(M)$,\, $\partial_{B}\,:\,A^{p,q}_{B}(M)
\,\longrightarrow\, A^{p+1,q}_{B}(M)$,\, $\overline\partial_{B}\,:\,A^{p,q}_{B}(M)\,
\longrightarrow\, A^{p,q+1}_{B}(M)$ and $$L
\,:\, A^{p,q}_{B}(M)\,\ni\, x\,\longmapsto\, d\eta\wedge x\in A^{p+1,q+1}_{B}(M)$$ respectively.
Then we have the K\"ahler identities
\[[\Lambda, \,\partial_{B}]\,=\,
-\sqrt{-1}\,\overline\partial_{B}^{\ast}\ \ \text{ and } \ \
[\Lambda ,\,\overline\partial_{B}]\,=\,\sqrt{-1}\,\partial_{B}^{\ast}.
\]
These imply the Hodge decomposition
\[H^{r}_{B}(M)\,=\,
\bigoplus_{p+q=r} H^{p,q}_{B}(M)\, 
\]
(see \cite{EKA}).

\subsection{Higgs bundles}

We call a basic holomorphic vector bundle $E$ over $(M,\,{\mathcal F}_{\xi})$ a {\em 
b-holomorphic vector bundle} over the Sasakian manifold $(M, \,(T^{1,0},\, S,\, I),\, 
(\eta,\, \xi))$. By Rawnsley's theorem \cite{Ra}, b-holomorphic vector bundles $E$ correspond to 
${\mathcal C}^{\infty}$-vector bundles $E$ with a flat partial ${\mathcal 
G}_{\xi}$-connection.

We define the {\it degree} of a b-holomorphic vector bundle $E$ by 
$${\rm deg}(E)\,:=\,\int_{M}c_{1, B}(E)\wedge (d\eta)^{n-1}\wedge\eta\, .$$
A {\em b-Higgs bundle} over a Sasakian manifold $(M, \,(T^{1,0},\, S,\, I),\, (\eta,\, \xi))$ is
a basic Higgs bundle $(E,\, \theta)$ over $(M,\,{\mathcal F}_{\xi})$.

We now define the stable Higgs bundles. Denote by ${\mathcal O}_{B}$ the sheaf of basic 
holomorphic functions on $M$, and for a b-holomorphic vector bundle $E$ on $M$, denote by 
${\mathcal O}_{B}(E)$ the sheaf of basic holomorphic sections of $E$. Consider ${\mathcal 
O}_{B}(E)$ as a coherent ${\mathcal O}_{B}$-sheaf.

For a b-Higgs 
bundle $(E,\, \theta)$ over $(M, \,(T^{1,0},\, S,\, I),\, (\eta,\, \xi))$, a {\em sub-b-Higgs sheaf} of $(E,\, \theta)$ is a coherent 
${\mathcal O}_{B}$-subsheaf $\mathcal V$ of ${\mathcal O}_{B}(E)$ such that $\theta ({\mathcal V})\, \subset\, {\mathcal 
V}\otimes \Omega_{B}$, where $\Omega_{B}$ is the sheaf 
of basic holomorphic $1$-forms on $M$. By \cite[Proposition 3.21]{BH}, if ${\rm rk} 
(\mathcal V)\,<\,{\rm rk}(E)$ and the quotient ${\mathcal O}_{B}(E)/\mathcal V$ is 
torsion-free, then there is a transversely analytic sub-variety $S\,\subset\, M$ of complex 
co-dimension at least 2 such that ${\mathcal V}\big\vert_{M\setminus S}$ is given by a
basic holomorphic sub-bundle $$V\,\subset\,
E\big\vert_{M\setminus S}.$$ The degree ${\rm deg}(\mathcal V)$ can be defined by integrating $c_{1, B}(V)\wedge (d\eta)^{n-1}\wedge\eta$
on this complement $M\setminus S$.

\begin{remark}
Defining ${\rm det}(\mathcal V)\,=\,\bigwedge^{{\rm rk} (\mathcal V)} (\mathcal 
V^{\ast\ast})$ by the same way as in the complex case (\cite[Cahpter V]{Ko}), we see that 
${\rm det}(\mathcal V)$ is a b-holomorphic line bundle. However, even if $E$ admits a basic 
Hermitian metric, it is not clear whether ${\rm det}(\mathcal V)$ admits a basic Hermitian 
metric. In Corollary \ref{det2} we give a sufficient condition for ${\rm det}(\mathcal V)$ to
admit a basic Hermitian metric.
\end{remark}

\begin{definition}\label{def1}
We say that a b-Higgs bundle $(E,\, \theta)$ is {\em stable} if $E$ admits a basic Hermitian metric and for every 
sub-b-Higgs sheaf ${\mathcal V}$ of $(E,\, \theta)$ such that ${\rm rk} (\mathcal V)\,<\,{\rm 
rk}(E)$ and ${\mathcal O}_{B}(E)/\mathcal V$ is torsion-free, 
the inequality
\[\frac{{\rm deg}(\mathcal V)}{{\rm rk} (\mathcal V)}\,<\,\frac{{\rm deg}(E)}{{\rm rk}(E)}
\]
holds.

A b-Higgs bundle $(E,\, \theta)$ is called {\em polystable} if
$$
(E,\, \theta)\,=\, \bigoplus_{i=1}^k (E_i,\, \theta_i)\, ,
$$
where each $(E_i,\, \theta_i)$ is a stable b-Higgs bundle with
\[\frac{{\rm deg}( E_i)}{{\rm rk} ( E_i)}\,=\,\frac{{\rm deg}(E)}{{\rm rk}(E)}\, .
\]
\end{definition}

\begin{remark}
If a b-holomorphic vector bundle $E$ admits a basic Hermitian metric, then by the canonical 
connection (see \eqref{chco} in the
next subsection), as usual, ${\rm deg}(E)$ is a real number. But, in 
general ${\rm deg}(E)$ is not a real number (see Example \ref{line}). For the definition of 
stability, the existence of a basic Hermitian metric is very important.
\end{remark}

\begin{example}\label{exTW}
For a strongly pseudo-convex CR-manifold $(M,\, (T^{1,0},\, S,\, I),\, (\eta, \,\xi))$
there exists a unique affine connection $\nabla^{TW}$ on $TM$ such that the following
statements hold (\cite{Tan, Web}):
\begin{enumerate}
\item $\nabla^{TW}(C^{\infty}(S))\,\subset\, A^{1}(M,\,S)$, where $A^{k}(M,\,S)$ is the space of 
differential $k$-forms on $M$ with values in the vector bundle $S$.

\item $\nabla^{TW}\xi\,=\,0$, $\nabla^{TW}I\,=\,0$, $\nabla^{TW}d\eta\,=\,0$,
$\nabla^{TW}\eta\,=\,0$ and $\nabla^{TW}g_{\eta}\,=\,0$.

\item The torsion $T^{TW}$ of the affine connection $\nabla^{TW}$ satisfies the equation
\[T^{TW} (X,\,Y)\,=\, -d\eta (X,\,Y)\xi
\]
for all $X,\,Y\,\in\, S_{x}$ and $x\,\in\, M$.
\end{enumerate}
This affine connection $\nabla^{TW}$ is called the {\em Tanaka--Webster connection}. It
is known that $(M,\, 
(T^{1,0},\, S,\, I),\, (\eta, \,\xi))$ is a Sasakian manifold if and only if 
$T^{TW}(\xi,\,v)\,=\,0$ for all $v\, \in\, TM$.

For a Sasakian manifold $(M,\, (T^{1,0},\, S,\, I),\, (\eta, \,\xi))$, we consider the complex vector bundle $T^{1,0}$.
Since $\nabla^{TW}I\,=\,0$, we can define $\nabla^{TW}$ as a connection on $T^{1,0}$.
For $X\,\in\, {\mathcal C}^{\infty}(T^{1,0})$, by $T^{TW}(\xi,X)\,=\,0$ and $\nabla^{TW}\xi\,=\,0$, we have $\nabla^{TW}_{\xi}X\,=\,[\xi,\, X]$.
For $W,\,Z\,\in\, {\mathcal C}^{\infty}(T^{1,0})$, we have 
$$\nabla^{TW}_{\overline{Z}}W-\nabla^{TW}_{W}\overline{Z}-[\overline{Z}, \,W]+d\eta(\overline{Z},\,W)\xi\,=\,0\, ,
$$
and hence $\nabla^{TW}_{\overline{Z}}W\,=\,p^{1,0}([\overline{Z}, W])$, where 
$p^{1,0}\,:\,TM_{\C} \,\longrightarrow\, T^{1,0}$ is the natural projection. By these 
relations, we can say that the connection $\nabla^{TW}$ on $T^{1,0}$ induces a flat partial 
${\mathcal G}_{\xi}$-connection. Hence, there is a canonical b-holomorphic vector bundle 
structure on $T^{1,0}$. This b-holomorphic vector bundle will be called the {\em holomorphic 
tangent bundle} of the Sasakian manifold $(M,\, (T^{1,0},\, S,\, I),\, (\eta, \,\xi))$.

Regard $g_{\eta}$ as a Hermitian metric on $T^{1,0}$.
Since $\nabla^{TW}g_{\eta}\,=\,0$, this $g_{\eta}$ is actually basic and $\nabla^{TW}$ is
in fact the canonical connection corresponding to $g_{\eta}$ (see \eqref{chco}).

Define a b-Higgs bundle $(E,\, \theta)$ such that:
\begin{itemize}
\item $E\,=\,\C_{M}\oplus T^{1,0}$ where $\C_{M}$ is the trivial b-holomorphic line bundle on $M$, and
\item $\theta\,=\,\left(
\begin{array}{cc}
0&0 \\
1 & 0
\end{array}
\right)$ where $1$ means the identity element in ${\rm End} (T^{1,0})$; here basic holomorphic $1$-forms with values in $T^{1,0}$
are regarded as basic holomorphic sections of $(T^{1,0})^{\ast}\otimes T^{1,0}\,=\,{\rm End} (T^{1,0})$.
\end{itemize}
If $n\,=\,1$ and ${\rm deg}(T^{1,0})\,<\,0$, then $(E,\,\theta)$ is stable.
\end{example}

\subsection{Hermite--Einstein metric}

Let $(E, \,\theta)$ be a b-Higgs bundle over a compact Sasakian manifold $(M,\, (T^{1,0}M,
\, S, \,I),\, (\eta, \,\xi))$. 
Let $h$ be a Hermitian metric on $E$. Assume that $h$ is basic.
Define $\overline\theta_{h}\,\in\, A^{0,1}_{B}(M,\,{\rm End}(E))$ by
\begin{equation}\label{tst}
(\theta (e_{1}),\, e_{2})\,=\,(e_{1}, \,\overline\theta_{h} (e_{2}))
\end{equation}
for all $e_{1},\,e_{2}\,\in\, E_x$ and all $x\, \in\, M$.
We define the canonical (Chern) connection
\begin{equation}\label{chco}
\nabla^{h}\, :\, A^0_{B}(M,\,E)\,\longrightarrow\, A^1_{B}(M,\,E)
\end{equation}
on the b-holomorphic Hermitian bundle
$(E,\, h)$ in the following way. Take local basic holomorphic frames $e^{\alpha}_{1},\,\cdots ,\,
e_{\alpha}^{\alpha}$ of $E$ with respect to an open covering $M\,=\,\bigcup_{\alpha} U_{\alpha}$. For the Hermitian 
matrices $H_{\alpha}\,=\,(h_{i\overline{j}}^{\alpha})$ with 
$h_{i\overline{j}}^{\alpha}\,:=\,h(e^{\alpha}_{i},\, e^{\alpha}_{j})$, define
\begin{equation}\label{nah}
\nabla^{h}\,=\,d+ 
H_{\alpha}^{-1}\partial_{\xi}H_{\alpha}
\end{equation}
on each $U_{\alpha}$.

Let us consider the canonical connection
\begin{equation}\label{dh}
D^{h}\,=\,\nabla^{h}+\theta+\overline\theta_{h}
\end{equation}
on $E$. Define the operator 
$\partial_{E,h}\,:\,A^{p,q}_{B}(M,\,E)\,\longrightarrow\,
A^{p+1,q}_{B}(M,\,E)$ 
such that $$\partial_{E,h}\,=\,\partial_{\xi}+ H_{\alpha}^{-1}\partial H_{\alpha}$$ on each 
$U_{\alpha}$. Therefore, $\partial_{E,h}$ is the $(1,\, 0)$-component of $\nabla^{h}$.

The basic Hermitian metric $h$ is said to be
{\em Hermite-Einstein } if the equation
\begin{equation}\label{hyme}
\Lambda R^{D^{h} }\, =\, \lambda I
\end{equation}
holds for some constant $\lambda$.

If \eqref{hyme} holds, then $\lambda$ gets related to the degree of $E$.

\begin{example}\label{line}
Consider the ${\mathcal C}^{\infty}$--trivial complex line bundle $E\,=\,M\times \C\, 
\longrightarrow\, M$. For any $C\,\in\,\C$, we define the connection 
\begin{equation}\label{naco}
\nabla^{C}\,=\,d+C\eta
\end{equation}
on $E$, where $\eta$ as before is the contact $1$-form on the 
Sasakian manifold $M$. Then, the curvature of $\nabla^{C}$ is $Cd\eta$. Since $d\eta\,\in\, 
A^{1,1}_{B}(M)$, this $\nabla^{C}$ induces a flat partial ${\mathcal G}_{\xi}$-connection 
on $E$. Consequently, we have a non-trivial b-holomorphic vector bundle structure $E_{C}$ on $E$ 
that depends on $C\,\in\, \C$. The basic cohomology class of $-\frac{1}{2\pi\sqrt{-1}}Cd\eta$ 
is the basic first Chern class of the basic vector bundle $E_{C}$. Thus 
$\{E_{C}\}_{C\in\C}$ is a family of basic vector bundles such that $E_{C}\,\not\cong\, 
E_{C^{\prime}}$ for every $C\,\not=\,C^{\prime}$. The degree of $E_{C}$ is
$$
{\rm deg}(E_{C})\,=\,-\int_{M}\frac{1}{2\pi\sqrt{-1}}C (d\eta)^{n}\wedge \eta\, .
$$
We note that ${\rm deg}(E_{C})$ is not a real number if $C$ is not purely imaginary.
Therefore, $E_{C}$ does not admit any basic Hermitian metric if $C$ is purely imaginary.

Now take $C$ to be purely imaginary. Then the standard constant Hermitian metric $h$ on the 
${\mathcal C}^{\infty}$ trivial line bundle $E\,=\,M\times \C$ is basic on $E_{C}$. The 
connection $\nabla^{C}$ in \eqref{naco} is unitary for $h$, and hence $\nabla^{C}$ is the 
canonical connection for the basic Hermitian metric $h$. We can check that the equation
\[
\Lambda R^{\nabla^{C}}\,=\,\lambda
\]
holds.
Hence $h$ is a Hermite-Einstein metric on the Higgs line bundle $(E_{C},\,0)$.
\end{example}

\begin{remark}
For any b-holomorphic vector bundle $E$ over $(M,\, (T^{1,0}M,
\, S, \,I),\, (\eta, \,\xi))$ admitting a basic Hermitian metric, there exists a unique $C\,\in\, \C$ such that the degree of
$\widehat{E}\,=\,E\otimes E_{C}$ is $0$.
Hence, for any b-Higgs bundle $(E,\,\theta)$ admitting a basic Hermitian metric,
we can reduce the equation in\eqref{hyme} to 
\begin{equation}
\Lambda R^{D^{h} }\, =\, 0\, .
\end{equation}
\end{remark}

For a b-Higgs bundle $(E,\,\theta)$, if ${\rm rk}(E)\,=\,1$, then a basic Hermitian metric 
$h$ is a Hermite--Einstein metric on $(E,\,\theta)$ if and only if it is a 
Hermite--Einstein metric on $(E,\,0)$. For any b-holomorphic line bundle $E$, if $E$ 
admits a basic Hermitian metric $h$, then there is a basic function $f$, which is unique up 
to an additive constant, such that the rescaled metric $e^{f}h$ is a Hermite--Einstein metric 
on $(E,\,0)$. Indeed, this is an easy consequence of the K\"ahler identities; see Section 
\ref{subshod}.

\begin{theorem}[{\cite{BK}}]\label{tthbk0}
Let $(E, \,\theta)$ be a stable b-Higgs bundle over a compact Sasakian manifold $(M,\, (T^{1,0}M,
\, S, \,I),\, (\eta, \,\xi))$. Then
there exists a basic Hermite--Einstein metric $h$ on $E$ which is unique up to a positive constant.
\end{theorem}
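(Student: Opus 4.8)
The plan is to follow the strategy of Donaldson, Uhlenbeck--Yau and Simpson for the existence and uniqueness of Hermite--Einstein metrics, carried out entirely within the space of \emph{basic} Hermitian metrics and using the transverse K\"ahler geometry of the Reeb foliation ${\mathcal F}_{\xi}$. By the remark preceding the statement we may tensor $(E,\,\theta)$ with a suitable line bundle $E_{C}$ so that ${\rm deg}(E)\,=\,0$; since tensoring by a line bundle carrying a Hermite--Einstein metric affects neither stability nor the Einstein condition, it suffices to produce a basic metric $h$ satisfying $\Lambda R^{D^{h}}\,=\,0$. Stability already guarantees that $E$ carries at least one basic Hermitian metric, so the relevant space of metrics is nonempty; fix a background basic metric $h_{0}$ in it. The decisive analytic feature throughout is that the basic Laplacians built from $\partial_{B},\,\overline\partial_{B}$ and $\Lambda$ are only \emph{transversally} elliptic along ${\mathcal F}_{\xi}$, so one must replace ordinary elliptic theory with the transverse Hodge theory of Section \ref{subshod} and the regularity theory for basic, transversally elliptic operators (see \cite{EKA}).

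I would treat uniqueness first. Given two basic Hermite--Einstein metrics $h_{1},\,h_{2}$, form the $h_{1}$-self-adjoint positive endomorphism $s\,=\,h_{1}^{-1}h_{2}$, which is basic. Subtracting the two equations $\Lambda R^{D^{h_{i}}}\,=\,0$ yields an identity for $s$ of Bochner type; pairing it against $s$ with respect to the transverse volume form $\eta\wedge(d\eta)^{n}$ and integrating by parts via the basic K\"ahler identities forces both $\overline\partial_{E}s$ and the commutator $[\theta,\,s]$ to vanish. Hence $s$ is a holomorphic, $\theta$-preserving endomorphism of $(E,\,\theta)$; stability, which makes $(E,\,\theta)$ simple as a Higgs sheaf, then forces $s$ to be a positive multiple of the identity, giving $h_{2}\,=\,c\,h_{1}$.

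For existence I would use the continuity method applied to the regularized equation
\[
\Lambda R^{D^{h}}\,=\,-\epsilon\,\log\!\big(h_{0}^{-1}h\big),\qquad \epsilon\,>\,0,
\]
seeking basic solutions $h_{\epsilon}$. For $\epsilon$ large a solution exists, and the set of admissible $\epsilon$ is open: the linearization of the left-hand side is, up to lower order terms, the basic complex Laplacian twisted by $\theta$, which is transversally elliptic and hence Fredholm on the Sobolev completions of the spaces of basic sections, so the implicit function theorem applies. Closedness of this set reduces, via the transverse K\"ahler identities and the consequent bootstrapping of transverse elliptic regularity, to a uniform $C^{0}$ bound on $\log(h_{0}^{-1}h_{\epsilon})$ independent of $\epsilon$.

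The main obstacle is precisely this uniform $C^{0}$ estimate and its interaction with stability, exactly as in \cite{Si2}. I expect to argue by contradiction: if $\sup_{M}|\log(h_{0}^{-1}h_{\epsilon})|\,\to\,\infty$ as $\epsilon\,\to\,0$, then after normalizing and extracting a weak limit one obtains a nonzero $L^{2}_{1}$ projection operator whose curvature estimate, by the basic analogue of the Uhlenbeck--Yau regularity theorem together with \cite[Proposition 3.21]{BH}, defines a coherent sub-b-Higgs sheaf ${\mathcal V}\,\subset\,{\mathcal O}_{B}(E)$ preserved by $\theta$. The basic Chern--Weil expression for ${\rm deg}({\mathcal V})$ then shows that ${\mathcal V}$ violates the stability inequality of Definition \ref{def1}, a contradiction. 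Granted the $C^{0}$ bound, transverse elliptic regularity upgrades $h_{\epsilon}$ to uniform basic $C^{\infty}$ bounds, and letting $\epsilon\,\to\,0$ produces a basic Hermite--Einstein metric $h$, unique up to a positive scalar by the first step.
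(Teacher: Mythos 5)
Your proposal is correct and follows essentially the same route as the paper: the paper imports existence from \cite[Theorem 5.2]{BK}, whose proof is precisely the transverse Uhlenbeck--Yau/Simpson continuity-method argument you sketch (with the destabilizing sub-b-Higgs sheaf supplied by the basic Uhlenbeck--Yau regularity statement together with \cite[Proposition 3.21]{BH}), and its uniqueness argument is the same Bochner/simplicity computation, carried out via the identity of \cite[Lemma 3.1]{Si1} to get $D^{\prime\prime}(\sigma)\,=\,0$ and then simplicity of the stable object. The only cosmetic difference is the normalization: you tensor with $E_{C}$ to reduce to degree zero at the outset, whereas the paper first solves $\Lambda R^{D^{h}\perp}\,=\,0$ for the trace-free part and then conformally rescales by $e^{f}$ using the basic K\"ahler identities.
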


\begin{proof}
In \cite[Theorem 5.2]{BK}, it was proved that there exists a basic Hermitian metric $h$ on $E$ such that
\[\Lambda R^{D^{h} \perp}\, =\,0\, ,
\]
where $R^{D^{h} \perp}$ is the trace-free part of the curvature $R^{D^{h}}$.
By the method as above, we can take a basic function $f$ such that the rescaled metric $e^{f}h$ is a Hermite--Einstein metric
on $(E,\,\theta)$.

We will now prove the statement on uniqueness.
Let $h$ and $h^{\prime}$ be two Hermite--Einstein metrics on $(E,\,\theta)$.
Then, there is a positive self-adjoint basic section $\sigma\, \in\, A^0(M,\, {\rm End}(E))$
such that $h^{\prime}\,=\,h\sigma$.
By the same proof as for \cite[Lemma 3.1]{Si1}, it follows that
\[\Delta_{h}^{\prime}(\sigma)\,=\,\sigma\sqrt{-1}(\Lambda R^{D^{h^{\prime}}}-\Lambda R^{D^{h}})+
\sqrt{-1}\Lambda D^{\prime\prime}(\sigma)\sigma^{-1}D^{\prime,h}(\sigma)\, ,
 \]
where $D^{\prime,h}\,=\,(\nabla^{h})^{1,0}+\overline{\theta}_{h}$ and $\Delta_{h}^{\prime}\,=\,(D^{\prime, h})^{\ast}D^{\prime, h}
\,=\,\sqrt{-1}\Lambda D^{\prime\prime}D^{\prime, h}$.
Since $h$ and $h^{\prime}$ are Hermite--Einstein, we have $\Lambda R^{D^{h^{\prime}}}-\Lambda R^{D^{h}}\,=\,0$.
Taking the trace and integrating, we now have 
\[\int_{M} \vert D^{\prime\prime}(\sigma)\sigma^{-1/2}\vert\,=\,0. \]
This implies that $D^{\prime\prime}(\sigma)\,=\,0$.
By the same arguments for stable bundles over compact K\"ahler manifolds (see \cite[Proposition 7.11, Corollary 7.12, Corollary 7.14]{Ko}),
it follows that $$\sigma \,=\,a I$$ for a non-zero $a\,\in \,\C$.
This proves the uniqueness, up to a positive constant, of the Hermite--Einstein metric $h$ on $E$.
\end{proof}

\begin{example}\label{exsl1}
Let $H^{2}$ be the hyperbolic plane. So ${\rm PSL}_{2}(\R)$ is the orientation preserving 
isometry group of $H^{2}$. Denote by $UH^{2}$ the unit tangent bundle of $H^{2}$. Then the 
action of ${\rm PSL}_{2}(\R)$ on $UH^{2}$ is simply transitive, and hence we can identify 
${\rm PSL}_{2}(\R)$ with $UH^{2}$ (see \cite{Sc}). From the isomorphism $TH^{2}\,\cong\, 
H^{2}\times \C$ we have ${\rm PSL}_{2}(\R)\,=\,UH^{2}\cong H^{2}\times S^{1}$. Consider the 
universal covering map $$p\,:\,\widetilde{\rm SL}_{2}(\R)\,\longrightarrow\, {\rm 
PSL}_{2}(\R)\, .$$ We have $\widetilde{\rm SL}_{2}(\R)\,\cong\, H^{2}\times 
\R\,\longrightarrow\, H^{2}\times S^{1}\,\cong\, {\rm PSL}_{2}(\R)$. Note that the quotient 
homomorphism $q\,:\,{\rm SL}_{2}(\R)\,\longrightarrow\, {\rm SL}_{2}(\R)/({\mathbb 
Z}/2{\mathbb Z}) \,=\, {\rm PSL}_{2}(\R)$ is a double covering. Let $\g$ be the Lie algebra 
of $\widetilde{\rm SL}_{2}(\R)$. The covering map $p_{2}\,:\, \widetilde{\rm 
SL}_{2}(\R)\,\longrightarrow\, {\rm SL}_{2}(\R)$ induces an isomorphism from $\g$ to 
$\mathfrak{sl}_{2}(\R)$. Hence, we have $\g\,=\,\langle X,\,Y, \,Z\rangle$ such that 
$[X,\,Z]\,=\,-2Y$, $[Y,\, Z] \,=\,2X$ and $[X,\,Y]\,=\,-2Z$ with
\begin{equation}\label{eqma}
X\,=\, \left(
\begin{array}{cc}
1 & 0 \\
0 & -1
\end{array}
\right),\qquad Y\,=\, \left(
\begin{array}{cc}
0 & 1 \\
1 & 0
\end{array}
\right)\qquad {\rm and} \qquad Z\,=\, \left(
\begin{array}{cc}
0 & -1 \\
1 & 0
\end{array}
\right).
\end{equation}
We regard $\g$ as the left-invariant vector fields on $\widetilde{\rm SL}_{2}(\R)$.
Then, $W\,=\,\frac{1}{2}(X-\sqrt{-1}Y)$ defines a left-invariant CR structure $T^{1,0}$ on $\widetilde{\rm SL}_{2}(\R)$.
Consider the dual $\g^{\ast}\,=\,\langle x,\,y,\, z\rangle$ as the left-invariant differential forms on $\widetilde{\rm SL}_{2}(\R)$.
Then, for the contact form $\eta\,=\,z$ with the Reeb vector field $\xi \,=\,Z$, 
we obtain a left-invariant Sasakian structure on $\widetilde{\rm SL}_{2}(\R)$.

Define the left-invariant connection $\nabla^{TW}$ such that
$$\nabla^{TW}_{X}\,=\,\nabla^{TW}_{Y}\,=\,0,\ \ \nabla^{TW}_{Z}(-)\,=\, [Z,\, -]$$
on $\g$. We can directly check that $\nabla^{TW}$ is the Tanaka-Webster connection. From the equation
$[Z,\,W]\,=\, 2\sqrt{-1}W$ we have $T_{1,0}\,\cong\, E_{2\sqrt{-1}}$, where $E_{2\sqrt{-1}}$ is the line bundle as in Example \ref{line}.

Define the Higgs bundle $(E,\,\theta)$ such that:
 \begin{itemize}
\item $E\,=\,E_{\sqrt{-1}}\oplus E_{-\sqrt{-1}}$, and

\item $\theta\,=\, w\otimes e^{\ast}_{-\sqrt{1}}\otimes e_{\sqrt{-1}}$, where $w$ is the dual of $W$ and $e_{C}$ is the
global ${\mathcal C}^{\infty}$-frame of $E_{C}$, where
$C\, \in\, \mathbb C$, satisfying the condition that $\nabla^{C}e_{C}\,=\,C\eta e_{C}$,
while $e^{\ast}_{C}$ is the dual of $e_{C}$.
\end{itemize}

Let $\widetilde{\Gamma}\,\subset\, \widetilde{\rm SL}_{2}(\R)$ be a co-compact discrete subgroup. A left-invariant Sasakian structure on 
$\widetilde{\rm SL}_{2}(\R)$ induces a Sasakian structure on the compact homogeneous space
$$M\,=\,\widetilde{\Gamma}\backslash \widetilde{\rm SL}_{2}(\R)\, .$$ 
Then regard the above defined $(E,\, \theta)$ as a b-Higgs bundle over $M$. Since ${\rm deg}(E_{\sqrt{-1}})\,<\,0$, we can check that the b-Higgs bundle
$(E,\,\theta)$ is 
stable. Consequently, by Theorem \ref{tthbk0}, there exists a basic Hermite-Einstein metric $h$ on $E$. Actually, setting $h$
to be the Hermitian metric on $E$ defined by $$h(e_{\sqrt{-1}},\,e_{\sqrt{-1}})\,=\,
h(e_{-\sqrt{-1}},\,e_{-\sqrt{-1}})\,=\,1,\ \,
h(e_{\sqrt{-1}},\ \,e_{-\sqrt{-1}})\,=\,0,$$ we have $$D^{h}\,=\,\nabla^{h}+\theta+\overline\theta_{h}\,=\,d+\left(
\begin{array}{cc}
\sqrt{-1}z&w\\
\overline{w} & -\sqrt{-1}z
\end{array}
\right),$$
and we can check that $R^{D^{h} }\,=\,0$. Thus $h$ is Hermite-Einstein.
\end{example}

\subsection{Harmonic metrics}\label{secHarm}

We consider any flat vector bundle $(E,\, D)$ over $M$ as a basic vector bundle.
Then we immediately have $$D\,:\, A^{\ast}_{B}(M,\,E)\,\longrightarrow\, A^{\ast+1}_{B}(M,\,E)\, .$$
Thus, $A^{\ast}_{B}(M,\, E)$ is a sub-complex of the de Rham 
complex $A^{\ast}(M,\,E)$ equipped with the differential $D$.
Denote by $H^{\ast}(M,\,E) $ and $H^{\ast}_{B}(M,\,E) $ the cohomology of the de Rham complex $A^{\ast}(M,\,E)$ and the
cohomology of the sub-complex $A^{\ast}_{B}(M,\, E)$ respectively. Let $h$ be a (not necessarily basic) Hermitian metric on $E$.
It gives a unique decomposition
$$
D\,=\, \nabla+\phi
$$
such that $\nabla$ is a unitary connection for $h$ and $\phi$ is a $1$-form on $M$ with values in
the self-adjoint endomorphisms of $E$ with respect to $h$. The Hermitian metric
$h$ is called {\em harmonic} if $\nabla^{\ast}\phi\,=\,0$ with
$\nabla^{\ast}$ being the formal adjoint operator of $\nabla$. It is known that the flat connection 
$(E,\, D)$ is semi-simple if and only if there exists a harmonic Hermitian metric $h$ \cite{Cor}. Recall
that $(E,\, D)$ is called semi-simple if it is a direct sum of vector
bundles with irreducible connection.
 
Assume that the Hermitian metric $h$ is basic.
This condition is equivalent to the condition that $\phi(\xi)\,=\,0$ (see \cite[Proposition 4.1]{BK}).
Then we have $$\nabla \,:\, A^{\ast}_{B}(M,\,E)
\,\longrightarrow\, A^{\ast+1}_{B}(M,\,E)\, .$$
Decompose the connection $\nabla$ as $$\nabla\, =\,\nabla^{\prime} +\nabla^{\prime\prime}\, ,$$
where $\nabla^{\prime}\,:\,A^{p,q}_{B}(M,\,E)\,\longrightarrow\,
A^{p+1,q}_{B}(M,\,E)$ and $\nabla^{\prime\prime}\,:\,A^{p,q}_{B}(M,\,E)\,\longrightarrow\, A^{p,q+1}_{B}(M,\,E)$, and also
decompose $\phi$ as 
$$\phi\,=\,\theta+\overline\theta\, ,$$ where 
$\theta\,\in\, A^{1,0}_{B}(M,\,{\rm End}(E))$ and $ 
\overline\theta\in A^{0,1}_{B}(M,\,{\rm End}(E))$.

\begin{theorem}[{\cite[Theorem 4.2]{BK}}]\label{t4.2}
Let $(M,\, (T^{1,0},\, S,\, I),\, (\eta,\, \xi))$ be a compact Sasakian manifold and
$(E, \, D)$ a flat complex vector bundle over $M$ with a Hermitian metric $h$.
Then the following two conditions are equivalent:
\begin{itemize}
\item The Hermitian structure $h$ is harmonic, i.e., $(\nabla)^{\ast}\phi\,=\,0$.

\item The Hermitian structure $h$ is basic, and the equations 
\[\nabla^{\prime\prime} \nabla^{\prime\prime} \,=\,0,\qquad 
[\theta,\,\theta]\,=\,0\qquad {\rm and} \qquad 
\nabla^{\prime\prime} \theta\,=\,0\, \]
hold.
\end{itemize}
\end{theorem}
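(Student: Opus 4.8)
The plan is to transfer Simpson's K\"ahler argument \cite{Si1} to the transverse (basic) setting, the one genuinely Sasakian ingredient being the passage from harmonicity to basicness. Throughout I write $\nabla\,=\,\nabla^{\prime}+\nabla^{\prime\prime}$ and $\phi\,=\,\theta+\overline\theta$ as in the statement, and I use freely the K\"ahler identities $[\Lambda,\,\nabla^{\prime\prime}]\,=\,\sqrt{-1}\,(\nabla^{\prime})^{\ast}$ and $[\Lambda,\,\nabla^{\prime}]\,=\,-\sqrt{-1}\,(\nabla^{\prime\prime})^{\ast}$ for the unitary connection $\nabla$ on ${\rm End}(E)$ (Section \ref{subshod}), together with \cite[Proposition 4.1]{BK}, which identifies basicness of $h$ with the condition $\phi(\xi)\,=\,0$. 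The first observation, used in both directions, is that since $\nabla$ is unitary and $\phi$ is self-adjoint, the flatness $D^{2}\,=\,0$ splits into its skew-Hermitian and self-adjoint parts,
\[
R^{\nabla}+\phi\wedge\phi\,=\,0\qquad\text{and}\qquad d^{\nabla}\phi\,=\,0\, .
\]
Once $h$ is basic, decomposing these by bidegree in the basic complex yields $\nabla^{\prime}\theta\,=\,0\,=\,\nabla^{\prime\prime}\overline\theta$, the relation $\nabla^{\prime\prime}\theta+\nabla^{\prime}\overline\theta\,=\,0$, and $\nabla^{\prime\prime}\nabla^{\prime\prime}\,=\,-\overline\theta\wedge\overline\theta$, $\theta\wedge\theta\,=\,-\nabla^{\prime}\nabla^{\prime}$, $R^{1,1}_{\nabla}\,=\,-[\theta,\,\overline\theta]$.

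For the implication $(2)\Rightarrow(1)$ I would argue directly. Since $\theta,\,\overline\theta$ have pure type, $\Lambda\theta\,=\,0\,=\,\Lambda\overline\theta$, so the K\"ahler identities give $(\nabla^{\prime})^{\ast}\theta\,=\,-\sqrt{-1}\,\Lambda\nabla^{\prime\prime}\theta$ and $(\nabla^{\prime\prime})^{\ast}\overline\theta\,=\,\sqrt{-1}\,\Lambda\nabla^{\prime}\overline\theta$. By type reasons $\nabla^{\ast}\phi\,=\,(\nabla^{\prime})^{\ast}\theta+(\nabla^{\prime\prime})^{\ast}\overline\theta$; the hypothesis $\nabla^{\prime\prime}\theta\,=\,0$ together with its Hermitian conjugate $\nabla^{\prime}\overline\theta\,=\,0$ then makes both terms vanish, so $\nabla^{\ast}\phi\,=\,0$ and $h$ is harmonic. (Here the equation $[\theta,\,\theta]\,=\,0$ is not even needed.)

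For $(1)\Rightarrow(2)$ there are two steps. The first, and the crux of the Sasakian case, is to deduce from $\nabla^{\ast}\phi\,=\,0$ that $\phi(\xi)\,=\,0$, i.e. that $h$ is basic. Here I would use that $\phi$ is simultaneously $d^{\nabla}$-closed (from the self-adjoint part of flatness) and $\nabla^{\ast}$-closed, hence $\nabla$-harmonic, and then exploit the contact structure: writing $\phi\,=\,\phi^{h}+\eta\otimes\phi(\xi)$ and applying the transverse trace against $d\eta$ to $d^{\nabla}\phi\,=\,0$, the term $\phi(\xi)\,d\eta$ produces $n\,\phi(\xi)$ because $\Lambda\,d\eta\,=\,n$; combining this, via the relation between the full codifferential and the basic operators on a Sasakian manifold, with $\nabla^{\ast}\phi\,=\,0$ and integrating by parts (the unit Killing field $\xi$ with $\nabla_{X}\xi\,=\,-IX$ entering) forces $\int_{M}\vert\phi(\xi)\vert^{2}\,=\,0$. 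It is essential that $d\eta\,\neq\,0$; this is exactly what fails for a flat torus, where harmonic forms need not be basic. Once $\phi(\xi)\,=\,0$ the bidegree decomposition above is available, and harmonicity, rewritten as in the previous paragraph, becomes $\Lambda\nabla^{\prime\prime}\theta\,=\,0$, so the $(1,1)$-form $\nabla^{\prime\prime}\theta$ is primitive. Feeding this into the Bochner computation for $\int_{M}\vert\nabla^{\prime\prime}\theta\vert^{2}\,=\,\int_{M}\langle\theta,\,(\nabla^{\prime\prime})^{\ast}\nabla^{\prime\prime}\theta\rangle$ — commuting $\Lambda$ past $\nabla^{\prime}$, using $\nabla^{\prime}\theta\,=\,0$, and substituting $R^{1,1}_{\nabla}\,=\,-[\theta,\,\overline\theta]$ — expresses this non-negative quantity as a non-positive curvature term of the form $-\,c\,\Vert[\theta,\,\theta]\Vert^{2}$ (the Siu--Sampson type formula for the nonpositively curved target ${\rm GL}_{r}(\C)/{\rm U}(r)$). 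Hence both $\nabla^{\prime\prime}\theta\,=\,0$ and $[\theta,\,\theta]\,=\,0$, and then $\nabla^{\prime\prime}\nabla^{\prime\prime}\,=\,-\overline\theta\wedge\overline\theta\,=\,0$, which are the three required equations.

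The main obstacle is the first step of $(1)\Rightarrow(2)$: unlike in the K\"ahler case there is no ambient complex manifold, and harmonicity of $\phi$ does not by itself imply basicness on a general foliated manifold, so the argument must use the Sasakian geometry in an essential way — the unit Killing Reeb field and the non-degeneracy of $d\eta$ — to suppress the Reeb direction. An alternative that I would keep in reserve is to pass to the associated equivariant harmonic map into ${\rm GL}_{r}(\C)/{\rm U}(r)$ and use uniqueness of harmonic maps into nonpositively curved targets to show invariance under the Reeb flow, although even there one must rule out the centralizer direction using the contact condition. The second potential difficulty, namely the transverse Bochner formula of the last step, is by contrast routine given the basic Hodge theory of Section \ref{subshod}.
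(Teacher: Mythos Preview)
The present paper does not prove this theorem: it is quoted from \cite[Theorem~4.2]{BK} and used as input, with no argument given here. So there is no proof in this paper to compare your proposal against.

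On its own merits your outline is sound and is indeed the shape of the argument in \cite{BK}. The implication $(2)\Rightarrow(1)$ is correct as written, and the Bochner step at the end of $(1)\Rightarrow(2)$ is the standard Simpson computation transplanted to the basic complex. The only place where your write-up is not yet a proof is the passage from $\nabla^{\ast}\phi=0$ to $\phi(\xi)=0$. You assert that applying $\Lambda$ to $d^{\nabla}\phi=0$ ``produces $n\,\phi(\xi)$'' and that an unspecified integration by parts then kills $\int_{M}\vert\phi(\xi)\vert^{2}$, but neither identity is actually displayed; in particular $\Lambda$ acts on the basic complex, so one must first separate the $\eta$-component of the $2$-form $d^{\nabla}\phi$ before any transverse trace makes sense, and the precise way the Sasakian relation $\nabla^{LC}_{X}\xi=-IX$ enters the codifferential needs to be written down. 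This is exactly the content of \cite[Proposition~4.1]{BK}, which you invoke but do not reproduce. Your alternative via uniqueness of equivariant harmonic maps into ${\rm GL}_{r}(\C)/{\rm U}(r)$ is viable but, as you say, still requires the contact non-degeneracy to exclude translations along the flat factor, so it does not bypass the Sasakian input. In short: correct strategy, but the key Sasakian step is asserted rather than carried out.
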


Given a semi-simple flat vector bundle $(E,\, D)$ equipped with a harmonic metric $h$, in 
view of Theorem \ref{t4.2}, $E$ is regarded as a holomorphic vector bundle corresponding to 
$\nabla^{\prime\prime}$ and $(E,\, \theta)$ is regarded a b-Higgs bundle. We know that 
$(E,\, \theta)$ is polystable (see \cite[Section 7]{BK}), and the harmonic metric $h$ is 
actually Hermite-Einstein for $(E,\, \theta)$.

Conversely, for a Hermite-Einstein metric $h$ on a Higgs bundle $(E,\,\theta)$, using the 
Riemann bilinear relations on the basic forms, we obtain the Sasakian version of 
\cite[Proposition 3.4]{Si1} by the same proof of it; in other words, we have the inequality
\begin{equation}\label{he}
\int_{M} \left(2c_{2, B}(E) -
\frac{r-1}{r}c_{1, B}(E)^2\right)\wedge(d\eta)^{n-2}\wedge\eta \, \geq\, 0\, ,
\end{equation}
where $r\,=\,{\rm rank}(E)$, and furthermore, if the inequality in \eqref{he} is an equality,
then $$R^{D^{h} }-\frac{1}{r} {\rm Tr}R^{D^{h} }I\, =\,0.$$ If
$$c_{1,B}(E)\,=\,0\qquad {\rm and} \qquad \int_{M} c_{2,B}(E)\wedge(d\eta)^{n-2}\wedge\eta\,=\,0$$
(this implies that the inequality in \eqref{he} is an equality), then $h$ is 
in fact harmonic, i.e., $R^{D^{h} }\,=\,0$.

In view of these, we can construct polystable Higgs bundles which come from semi-simple flat vector bundles via harmonic metrics,
and we can construct flat bundles which come from polystable b-Higgs bundles $(E,\, \theta)$ for which
$$c_{1,B}(E)\,=\,0\qquad {\rm and} \qquad \int_{M} c_{2,B}(E)\wedge(d\eta)^{n-2}\wedge\eta\,=\,0$$ via Hermite-Einstein metrics.
These two constructions give an equivalence of categories between the following two:
\begin{itemize}
\item the category of polystable b-Higgs bundles $(E,\,\theta)$ satisfying
$$c_{1,B}(E)\,=\,0\qquad {\rm and} \qquad \int_{M} c_{2,B}(E)\wedge(d\eta)^{n-2}\wedge \eta\,=\,0$$
over a compact Sasakian manifold $(M,\, (T^{1,0}M, \, S, \,I),\, (\eta, \,\xi))$, and

\item the category of semi-simple flat vector bundles over $M$.
\end{itemize}
See \cite[Section 7]{BK} and \cite[Corollary 1.3]{Si1}.

\begin{remark}[{Cf. \cite{DonI, UY}}]\label{KHS}
We say that a b-holomorphic vector bundle $E$ over a compact Sasakian manifold $(M,\, (T^{1,0}M,
\, S, \,I),\, (\eta, \,\xi))$ is stable (respectively, polystable) if the b-Higgs bundle $(E,\,0)$ is stable
(respectively, polystable). Restricting the above equivalence of categories, we have an equivalence between
the following two categories:
\begin{itemize}
\item the category of polystable b-holomorphic bundles $E$ satisfying
$$c_{1,B}(E)\,=\,0\qquad {\rm and} \qquad \int_{M} c_{2,B}(E)\wedge(d\eta)^{n-2}\wedge\eta\,=\,0$$
over a compact Sasakian manifold $(M,\, (T^{1,0}M, \, S, \,I),\, (\eta, \,\xi))$, and

\item the category of unitary flat vector bundles over $M$.
\end{itemize}
\end{remark}

\begin{example}\label{Exsl22}
We now return to Example \ref{exsl1}.
The differential of the covering map $p_{2}\,:\, \widetilde{\rm SL}_{2}(\R)\,\longrightarrow\, {\rm SL}_{2}(\R)$
is identified with the $\mathfrak{sl}_{2}(\R)$-valued left-invariant $1$-form $$\omega \,=\, x\otimes X+ y\otimes Y +z\otimes Z
\,\in\, \g^{\ast}\otimes \mathfrak{sl}_{2}(\R)\, ,$$ where $X$, $Y$ and $Z$ are defined in \eqref{eqma}.
For the ${\mathcal C}^{\infty}$--trivial vector bundle $E\,=\,M\times \C^{2}$ over
$M\,=\,\widetilde{\Gamma}\backslash \widetilde{\rm SL}_{2}(\R)$, we define the connection
$D\,=\,d+\omega$. Then the monodromy homomorphism for the flat bundle $(E,\,D)$
$$
\pi_{1}(M,\, I) \,=\, \widetilde{\Gamma}\, \longrightarrow\, {\rm SL}_{2}(\R)
$$
coincides with the restriction of the above projection
$p_{2}\,:\, \widetilde{\rm SL}_{2}(\R)\,\longrightarrow\, {\rm SL}_{2}(\R)$
to the subgroup $\widetilde{\Gamma}\, \subset\, \widetilde{\rm SL}_{2}(\R)$.
Let $h$ be the Hermitian metric on $E\,=\,M\times \C^{2}$ given by the standard Hermitian structure on $\C^{2}$.
If $D\,=\,\nabla +\phi$ is the canonical decomposition, then $\nabla \,=\,d +z\otimes Z$ and
$\phi\,=\, x\otimes X+ y\otimes Y$. It is straight-forward to check that $h$ is
actually harmonic. This metric coincides with the Hermite-Einstein
metric in Example \ref{exsl1} up to the conjugation by an invertible matrix.
\end{example}

\begin{example}
Define the b-Higgs bundle $(E,\,\theta)$, where $E\,=\,\C_{M}\oplus T^{1,0}$ is as in Example \ref{exTW}.
If $(E,\,\theta)$ is stable, then there exists a basic Hermite-Einstein metric $h$ on $E$.
In this case, we have the inequality
\begin{equation}\label{c1}
\int_{M} \left(2c_{2, B}(T^{1,0}) -
\frac{n}{n+1}c_{1, B}(T^{1,0})^2\right)\wedge(d\eta)^{n-2}\wedge\eta \, \geq\, 0\, .
\end{equation}
This Miyaoka-Yau type inequality is also proved in \cite{Za} for another assumption by using Sasaki-Einstein metrics. If in \eqref{c1}
the equality holds, then
\begin{equation}\label{dh2}
R^{D^{h} }-\frac{1}{r} {\rm Tr}R^{D^{h} }I \,=\,0,
\end{equation}
where $D^h$ is the connection defined as in \eqref{dh}. From \eqref{dh2} it follows
that the connection $D^{h}$ is projectively flat. Moreover, if $$2\pi 
\sqrt{-1}c_{1, B}(T^{1,0})\,=\,(n+1)C[d\eta]$$ for some $C\,\in\, \C$, then we have $c_{1,B}(E\otimes E_{C})\,=\,0$,
i.e., the projective representation of 
$\pi_{1}(M,\, I)$ given by the monodromy of the projectively flat bundle $(E,\, D^{h})$
gets lifted to a linear representation of $\pi_{1}(M,\, I)$.
\end{example}

\section{Quasi-regular Sasakian manifolds and orbifolds}
\subsection{Quasi-regularity}\label{q-rv}

Recall from \cite{BoG} that a compact Sasakian manifold $$(M,\, (T^{1,0}, \,S,\, I),\, (\eta, \,\xi))$$ is called {\em quasi-regular} if every 
leaf of the foliation $\mathcal F_{\xi}$ is closed. For any given compact Sasakian manifold $(M,\, (T^{1,0}, \,S,\, I),\, (\eta, \,\xi))$, we can 
take another contact form $\eta^{\prime}$ with the Reeb vector field $\xi^{\prime}$ so that $(M,\, (T^{1,0}, \,S,\, I),\, (\eta^{\prime}, 
\,\xi^{\prime}))$ is quasi-regular (see \cite[Section 8.2.3]{BoG}). More precisely, we can take a Killing vector field $\chi$ commuting with 
$\xi$ such that $\xi^{\prime}\,=\,\xi+\chi$ and $\eta^{\prime}\,=\,\frac{\eta}{1+\eta(\chi)}$.

Let $(M,\, (T^{1,0}, \,S,\, I),\, (\eta, \,\xi))$ be a compact Sasakian manifold.
Take the flow on $M$
$$\varphi\,:\, \R\times M\, \longrightarrow\, M,\ \ \,
(t,\,x)\,\longmapsto\, \varphi_{t}(x),$$ generated by the Reeb vector field $\xi$.
This means that $\frac{d}{dt }\varphi_{t}(x)\vert _{t=0}\,=\,\xi_{x}$.
Then each orbit of this flow is a leaf of the foliation $\mathcal F_{\xi}$.
Let $E$ be a basic vector bundle over the foliated manifold $(M,\,{\mathcal F}_{\xi})$.
We define a natural action
\begin{equation}\label{p1}
\Phi^{E}\,:\,\R\times E\,\longrightarrow\, E,\ \ \, (t,\,e)\,\longmapsto\, \Phi^{E}_{t}(e)
\end{equation}
as follows:
For small $s\,>\,0$, and any $x\,\in\, M$, take the local trivialization $E_{\vert U}\,\cong\, U\times \C^{{\rm rank}(E)}$
given by basic sections on a neighborhood $U$ of $x$ so that $\varphi_{t}(x)\,\in\, U$ for
any $t\,\in\, [0,\,s]$.
So $\Phi^{E}_{t}(x,\,v)\,=\,(\varphi_{t}(x), \,v)$ for $(t,\, x,\,v)\,\in\, [0,\,s]\times E_{\vert U}$.
Since, the transition functions are constant on each orbit of the flow $\varphi$, this is well-defined and 
we can extend to $$\Phi^{E}_{t}\,:\, E\,\,\longrightarrow\, E$$ for all $t\, \in\, \mathbb R$.
For the flat partial ${\mathcal F}_{\xi}$-connection $\nabla$ associated with the basic 
vector bundle $E$, the fiber vector $\Phi^{E}_{t}(e)$ is the $\nabla$-parallel transport of 
$e\,\in\, E_{x}$ along the integral curve $[0,\,t]\,\ni\, s\,\longmapsto\, 
\varphi_{s}(x)\,\in\, M$. The map $\Phi^{E}$ in \eqref{p1} sends any
$(t,\, e)\, \in\, \R\times E$ to $\Phi^{E}_{t}(e)$.

For the natural projection $p_{E}\,:\,E\,\longrightarrow\, M$, we have $p_{E}\circ \Phi^{E}_{t}\,=\,\varphi_{t}$.
For a section $s\,\in\, {\mathcal C}^{\infty}(E)$, for each $t\,\in \,\R$, we define $(\Phi^{E}_{t})^{\ast}(s)\,\in\,
{\mathcal C}^{\infty}(E)$ by $(\Phi^{E}_{t})^{\ast}(s)(x)\,=\,(\Phi^{E}_{t})^{-1}s(\varphi_{t}(x))$.
A section $s$ is called $\Phi^{E}$-invariant if $(\Phi^{E}_{t})^{\ast}(s)\,=\,s$ for every $t\,\in\, \R$. 
In particular, a Hermitian metric on $E$ is called $\Phi^{E}$-invariant if
$h(\Phi^{E}_{t}(e_{1}),\, \Phi^{E}_{t}(e_{2}))\,=\,h(e_{1},\, e_{2})$ for every $t\,\in\, \R$ and all $e_{1},\, e_{2}\,\in\, E_x$
for each $x\, \in\, M$.

\begin{lemma}
A section $s$ is $\Phi^{E}$-invariant if and only if $s$ is basic.
\end{lemma}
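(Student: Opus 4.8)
The plan is to unwind the definitions of both notions and show each implies the other via the action of the flow $\varphi$ generated by the Reeb vector field $\xi$. Recall that $\Phi^{E}$ is precisely the $\nabla$-parallel transport along the orbits of $\varphi$, where $\nabla$ is the canonical flat partial ${\mathcal F}_{\xi}$-connection attached to the basic structure of $E$. Since $\xi$ is (a local generator of) $T{\mathcal F}_{\xi}$, this parallel transport is governed by the operator $D$ of the flat partial connection, and the key observation is that $\Phi^{E}$-invariance of a section is an \emph{integrated} version of the \emph{infinitesimal} condition $D_{\xi}s=0$.

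First I would differentiate the invariance condition. For a section $s$, the family $t\mapsto (\Phi^{E}_{t})^{\ast}(s)$ satisfies the one-parameter flow identity $(\Phi^{E}_{t+u})^{\ast}=(\Phi^{E}_{u})^{\ast}(\Phi^{E}_{t})^{\ast}$, so it suffices to compute $\frac{d}{dt}(\Phi^{E}_{t})^{\ast}(s)\big|_{t=0}$. By the definition of parallel transport via the flat partial connection, this derivative equals $D_{\xi}s$ (up to sign), so $\Phi^{E}$-invariance is equivalent to the vanishing of $\frac{d}{dt}(\Phi^{E}_{t})^{\ast}(s)$ for all $t$, which in turn is equivalent to $D_{\xi}s=0$ at every point.

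Next I would invoke the characterization of basic sections already recorded in the excerpt: a section $s$ of $E$ is basic if and only if $Ds=0$, i.e.\ $D_{X}s=0$ for every $X\in T{\mathcal F}_{\xi}$. Since the Reeb foliation ${\mathcal F}_{\xi}$ is one-dimensional and $\xi$ spans $T{\mathcal F}_{\xi}$, the condition $Ds=0$ is equivalent to the single equation $D_{\xi}s=0$. Chaining the two equivalences gives: $s$ is $\Phi^{E}$-invariant $\iff D_{\xi}s=0 \iff Ds=0 \iff s$ is basic, which is the claim.

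The main obstacle I anticipate is the passage between the integrated invariance statement and the infinitesimal one, i.e.\ justifying that $\frac{d}{dt}(\Phi^{E}_{t})^{\ast}(s)=0$ for all $t$ forces $D_{\xi}s=0$ and conversely. The forward direction is the uniqueness of solutions to the parallel-transport ODE along each orbit: if $D_{\xi}s=0$, then $s$ restricted to each orbit is a parallel section, so it is carried to itself by $\Phi^{E}_{t}$. The converse requires care when an orbit is \emph{not} closed (the general, not-necessarily-quasi-regular, case): here one differentiates the invariance relation at $t=0$ and uses the flow property to propagate the vanishing of the derivative along the whole orbit, concluding $D_{\xi}s\equiv 0$. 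Once this equivalence with $D_{\xi}s=0$ is secured, the remainder is the purely formal identification of $D_{\xi}s=0$ with $Ds=0$ afforded by the one-dimensionality of ${\mathcal F}_{\xi}$, so no further analytic input is needed.
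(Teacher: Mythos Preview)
Your proposal is correct and follows essentially the same route as the paper: both arguments identify $\frac{d}{dt}\big|_{t=t_0}(\Phi^{E}_{t})^{\ast}s$ with $(\Phi^{E}_{t_0})^{\ast}\nabla_{\xi}s$ via the flow property, then use the one-dimensionality of ${\mathcal F}_{\xi}$ to equate $\nabla_{\xi}s=0$ with basicness, and invoke ODE uniqueness for the direction ``basic $\Rightarrow$ invariant''. The paper's write-up is slightly terser but the logical skeleton is the same.
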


\begin{proof}
For the flat partial ${\mathcal F}_{\xi}$-connection $\nabla$ associated with the basic vector bundle $E$,
$$(\Phi^{E}_{t_{0}})^{\ast}\nabla_{\xi} s\,=\, (\Phi^{E}_{t_{0}})^{\ast}\frac{d}{dt }_{\vert_{t=0}}( \Phi^{E}_{t})^{\ast} s
\,=\,\frac{d}{dt}_{\vert_{t=t_{0}} }( \Phi^{E}_{t})^{\ast} s$$ for all $t_{0}\,\in\, \R$.
If $s$ is $\Phi^{E}$-invariant, then we have $\nabla_{\xi} s\,=\,0$, and hence $s$ is basic.

If $s$ is basic, then $\nabla_{\xi} s\,=\,0$ and hence $\frac{d}{dt}_{\vert_{t=t_{0}} }( \Phi^{E}_{t})^{\ast} s\,=\,0$.
Since $ \Phi^{E}_{0}\,=\,{\rm Id}_E$, the uniqueness of a solution of an
ordinary differential equation implies that $(\Phi^{E}_{t})^{\ast} s\,=\,s$.
\end{proof}

If $(M,\, (T^{1,0}, \,S,\, I),\, (\eta, \,\xi))$ is quasi-regular, then by Wadsley's 
Theorem in \cite{Wa}, the flow $\varphi\,:\, \R\times M\,\longrightarrow\, M$ induces a 
smooth action $\psi\,:\, S^{1}\times M\,\longrightarrow\, M$, in other words, there exists 
a positive $r\,\in\, \R$ such that $\varphi_{r}(x)\,=\,x$ for all $x\,\in\, M$. The minimum 
of all such $r$ is called the {\em period} of $(M,\, (T^{1,0}, \,S,\, I),\, (\eta, \,\xi))$.

\begin{remark}\label{Wad}
The following more detailed comments on Wadsley's result will be useful later.
Let $\Lambda\,:\, M\,\longrightarrow\, {\R}^{> 0}$ be the unique
positive valued function satisfying the conditions that for all $x\, \in\, M$,
\begin{itemize}
\item $\varphi_{\Lambda(x)}(x)\,=\,x$, and

\item $\varphi_{t}(x)\,\not=\, x$ for all $0\,<\,t\,<\,\Lambda(x)$.
\end{itemize}
Then $\Lambda$ is lower semi-continuous function.
The subset $M_{reg}\,=\, \{x\,\in\, M\,\,\mid\,\, \Lambda(x)=r\}$ is open dense
in $M$, where $r$ is the period of a compact
quasi-regular Sasakian manifold $(M,\, (T^{1,0}, \,S,\, I),\, (\eta, \,\xi))$.
\end{remark}

\begin{definition}\label{DQRR}
Given a compact quasi-regular Sasakian manifold $(M,\, (T^{1,0}, \,S,\, I),\, (\eta, 
\,\xi))$ with the period $r$, a basic vector bundle $E$ over the foliated manifold 
$(M,{\mathcal F}_{\xi})$ is called {\em quasi-regular} if
$$\Phi^{E}\,:\, \R\times E\, \longrightarrow\, E\, , \, \ \ (t,\, e)\, \longmapsto\,
\Phi^{E}_{t}(e)$$
induces a smooth action $\Psi^{E}\,:\, S^{1}\times E\,\longrightarrow\, E$, or in other 
words, there exists a positive integer $m$ such that $\Phi_{mr}^{E}(e)\,=\,e$ for all $e\, 
\in\, E$. A quasi-regular basic vector bundle $E$ is called {\em regular} if the above 
integer $m$ can be taken to be $1$.
\end{definition}

It is evident that a basic sub-bundle of a quasi-regular (respectively, regular) vector 
bundle is also quasi-regular (respectively, regular).

\begin{remark}
The notion of quasi-regularity (respectively, regularity) in Definition \ref{DQRR} is a 
geometric refinement of the notion of being ``virtually-basic'' (respectively, ``basic'') 
(see \cite[Definition 4.2]{BM}). It may be clarified that this notion of ``basic''-ness is 
different from the basicness of vector bundles over $(M,\, {\mathcal F}_{\xi})$. In the 
arguments in \cite{BM}, a structure of (singular) projective variety of the orbit space for 
the action $\psi\,:\, S^{1}\times M\,\longrightarrow\, M$, and its desingularization, are 
used; however, we do not need them here. In fact, these considerations have certain 
disadvantages. On the orbit space of the action $\psi\,:\, S^{1}\times M\,\longrightarrow\, 
M$ the complex orbifold structure explained in Section \ref{Orbsec} is more explicit and 
more appropriate than the structure of the projective variety associated to a quasi-regular 
Sasakian manifold. There is no canonical choice of a desingularization of it. It seems that 
the correspondence in \cite[Section 5]{BM} depends on the choice of a desingularization. 
Our correspondence given in Section \ref{secHig} evidently depends only on the Sasakian 
structure.
\end{remark}

\begin{lemma}\label{lemSinv}
Let $(M,\, (T^{1,0}, \,S,\, I),\, (\eta, \,\xi))$ be a compact quasi-regular Sasakian manifold.
If a basic vector bundle $E$ over the foliated manifold $(M,\,{\mathcal F}_{\xi})$ is
quasi-regular, then there exists a $\Phi^{E}$-invariant Hermitian metric on $E$, in
particular, there is a basic Hermitian metric.
\end{lemma}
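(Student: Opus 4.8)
The plan is to produce the desired metric by averaging an arbitrary Hermitian metric over the circle action $\Psi^{E}$ that comes from the quasi-regularity of $E$. First I would fix any Hermitian metric $h_{0}$ on the $C^{\infty}$ vector bundle $E$; such a metric exists by the usual partition of unity argument, with no basicness required. Let $r$ be the period of $(M,\, (T^{1,0},\, S,\, I),\, (\eta,\, \xi))$ and let $m$ be the positive integer furnished by Definition \ref{DQRR}, so that $\Phi^{E}_{mr}\,=\,{\rm Id}_{E}$. Recalling that $p_{E}\circ \Phi^{E}_{t}\,=\,\varphi_{t}$, for $e_{1},\, e_{2}\,\in\, E_{x}$ the vectors $\Phi^{E}_{t}(e_{1})$ and $\Phi^{E}_{t}(e_{2})$ both lie in the fiber $E_{\varphi_{t}(x)}$, so I can define
\[
h(e_{1},\, e_{2})\,=\,\frac{1}{mr}\int_{0}^{mr} h_{0}\bigl(\Phi^{E}_{t}(e_{1}),\, \Phi^{E}_{t}(e_{2})\bigr)\, dt\, .
\]

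Next I would check that $h$ is a genuine Hermitian metric. Sesquilinearity is immediate from that of $h_{0}$ together with the fiberwise linearity of each $\Phi^{E}_{t}$. For positive definiteness, note that each integrand $h_{0}(\Phi^{E}_{t}(e),\, \Phi^{E}_{t}(e))$ is nonnegative and vanishes only when $\Phi^{E}_{t}(e)\,=\,0$, hence only when $e\,=\,0$ since $\Phi^{E}_{t}$ is a linear isomorphism on fibers; integrating over an interval of positive length therefore gives a positive definite form. Smoothness of $h$ is where the hypothesis of quasi-regularity of $E$ is essential: it guarantees that $\Phi^{E}$ descends to a smooth action $\Psi^{E}\,:\, S^{1}\times E\,\longrightarrow\, E$, so the integrand is a smooth function of $(t,\, e)$ over the compact parameter interval $[0,\, mr]$, and integration in $t$ then yields a smooth section.

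Finally I would verify the $\Phi^{E}$-invariance. Using the flow property $\Phi^{E}_{t}\circ \Phi^{E}_{s}\,=\,\Phi^{E}_{t+s}$ and the substitution $u\,=\,t+s$,
\[
h\bigl(\Phi^{E}_{s}(e_{1}),\, \Phi^{E}_{s}(e_{2})\bigr)\,=\,\frac{1}{mr}\int_{s}^{mr+s} h_{0}\bigl(\Phi^{E}_{u}(e_{1}),\, \Phi^{E}_{u}(e_{2})\bigr)\, du\, ,
\]
and since the integrand is periodic with period $mr$ by $\Phi^{E}_{mr}\,=\,{\rm Id}_{E}$, the integral over $[s,\, mr+s]$ equals the one over $[0,\, mr]$, giving $h(e_{1},\, e_{2})$. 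Thus $h$ is $\Phi^{E}$-invariant. To conclude that $h$ is basic, I would differentiate the invariance identity at $s\,=\,0$ in the direction of the Reeb field $\xi$; this yields that the canonical flat partial $\mathcal F_{\xi}$-connection $D$ preserves $h$, which is exactly the statement that $h$ is basic. The main obstacle is the smoothness step, and the whole construction breaks down without quasi-regularity of $E$: if $\Phi^{E}$ were not periodic there would be no finite interval over which to average so as to produce an invariant metric.
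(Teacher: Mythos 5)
Your proof is correct and is essentially the paper's own argument: the paper likewise averages an arbitrary Hermitian metric over the circle action $\Psi^{E}$ using the invariant measure on $S^{1}$, which is exactly your $\frac{1}{mr}\int_{0}^{mr}\cdot\, dt$. The additional verifications you supply (positivity, smoothness via the smooth $S^{1}$-action, and the passage from $\Phi^{E}$-invariance to basicness, which the paper delegates to the preceding lemma) are all sound.
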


\begin{proof}
Take a bi-invariant measure $d\mu $ on $S^{1}$.
For an arbitrary Hermitian metric $h$, define $\widetilde{h}$ by
\[\widetilde{h}(e_{1},\,e_{2})\,=\, \int_{g\in S^{1}}h(\Psi^{E}_{g}(e_{1}),\Psi^{E}_{g}(e_{2}))d\mu
\]
for $e_{1},\, e_{2}\,\in\, E_{x}$ and $x\, \in\, M$.
Then $\widetilde{h}$ is a $\Phi^{E}$-invariant Hermitian metric on $E$.
\end{proof}

\begin{lemma}\label{det}
Let $E$ be a b-holomorphic vector bundle over a compact quasi-regular Sasakian manifold 
$(M,\, (T^{1,0}, \,S,\, I),\, (\eta, \,\xi))$. Assume that $E$ is a quasi-regular 
(respectively, regular) basic vector bundle over $(M,\,{\mathcal F}_{\xi})$. Then, for any 
coherent ${\mathcal O}_{B}$-subsheaf $\mathcal V$ of ${\mathcal O}_{B}(E)$, the 
b-holomorphic line bundle ${\rm det}(\mathcal V)$ is also quasi-regular (respectively, 
regular).
\end{lemma}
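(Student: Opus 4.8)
The plan is to exploit the naturality of the flow $\Phi^{E}$ under multilinear-algebra operations together with a description of $\det(\mathcal V)$ away from a small bad locus. First I would record the elementary but crucial compatibility: for a quasi-regular basic vector bundle $E$ with associated integer $m$ (so that $\Phi^{E}_{mr}={\rm Id}_{E}$, where $r$ is the period), every naturally associated bundle inherits the same periodicity, because the flow on an associated bundle is the one induced by $\Phi^{E}$. Concretely, the flat partial ${\mathcal F}_{\xi}$-connection on $\bigwedge^{k}E$ is the exterior power of that of $E$, so $\Phi^{\bigwedge^{k}E}_{t}=\bigwedge^{k}\Phi^{E}_{t}$ and hence $\Phi^{\bigwedge^{k}E}_{mr}=\bigwedge^{k}\Phi^{E}_{mr}={\rm Id}$. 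Thus $\bigwedge^{k}E$ is again quasi-regular with the same integer $m$, and regular (i.e.\ $m=1$) whenever $E$ is regular.

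Next I would reduce to the case where $\mathcal V$ is saturated in ${\mathcal O}_{B}(E)$. Since $\det(\mathcal V)=\bigwedge^{{\rm rk}(\mathcal V)}(\mathcal V^{\ast\ast})$ is built from the reflexive hull, and on a transversely smooth space the reflexive hull of $\mathcal V$ inside ${\mathcal O}_{B}(E)$ coincides with its saturation $\mathcal V^{\rm sat}$, replacing $\mathcal V$ by $\mathcal V^{\rm sat}$ does not change $\det(\mathcal V)$. If ${\rm rk}(\mathcal V)={\rm rk}(E)$, then $\mathcal V^{\rm sat}={\mathcal O}_{B}(E)$ and $\det(\mathcal V)=\bigwedge^{{\rm rk}(E)}E$, which is handled by the first paragraph, while the rank-zero case gives $\det(\mathcal V)={\mathcal O}_{M}$, trivially regular. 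Hence I may assume $0<{\rm rk}(\mathcal V)=k<{\rm rk}(E)$ with ${\mathcal O}_{B}(E)/\mathcal V$ torsion-free.

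In this situation Proposition 3.21 of \cite{BH} provides a transversely analytic subvariety $S\subset M$ of complex codimension at least $2$ such that $\mathcal V|_{M\setminus S}$ is a basic holomorphic sub-bundle $V\subset E|_{M\setminus S}$ of rank $k$. Over $M\setminus S$ one then has $\det(\mathcal V)=\bigwedge^{k}V$, a $\Phi$-invariant sub-line-bundle of $\bigwedge^{k}E$: indeed $V$, being basic, is $\Phi^{E}$-invariant as a sub-bundle, so $\bigwedge^{k}V$ is preserved by $\Phi^{\bigwedge^{k}E}=\bigwedge^{k}\Phi^{E}$. Restricting the identity $\Phi^{\bigwedge^{k}E}_{mr}={\rm Id}$ from the first paragraph to this invariant sub-line-bundle yields $\Phi^{\det(\mathcal V)}_{mr}={\rm Id}$ on $M\setminus S$.

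It remains to extend this across $S$. Because $mr$ is a multiple of the period, $\varphi_{mr}={\rm Id}_{M}$, so $\Phi^{\det(\mathcal V)}_{mr}$ is an automorphism of the line bundle $\det(\mathcal V)$ covering the identity of $M$; equivalently, it is multiplication by a smooth nowhere-vanishing function on $M$. This function equals $1$ on the dense open set $M\setminus S$, hence equals $1$ everywhere, so $\Phi^{\det(\mathcal V)}_{mr}={\rm Id}$ on all of $M$. Therefore $\det(\mathcal V)$ is quasi-regular with integer $m$, and regular when $m=1$, i.e.\ when $E$ is regular. The step I expect to be most delicate is the sheaf-theoretic bookkeeping in the reduction—identifying $\det(\mathcal V)$ with $\det(\mathcal V^{\rm sat})$ and then with $\bigwedge^{k}V$ off the codimension-$\ge 2$ locus—together with the continuity argument that propagates the periodicity across $S$; by contrast, the naturality of $\Phi$ under exterior powers is immediate.
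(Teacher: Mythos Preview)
Your proposal is correct and follows essentially the same approach as the paper: identify $\det(\mathcal V)$ on the complement of a transversely analytic set $S$ of complex codimension $\ge 2$ with a sub-line-bundle of $\bigwedge^{{\rm rk}(\mathcal V)}E$, deduce $\Phi^{\det(\mathcal V)}_{mr}={\rm Id}$ there, and then extend across $S$ by density. The paper's proof is terser---it omits your explicit discussion of naturality under exterior powers, the saturation reduction, and the interpretation of $\Phi^{\det(\mathcal V)}_{mr}$ as multiplication by a smooth function---but the underlying argument is the same.
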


\begin{proof}
Just as in the complex case, there is a transversely analytic sub-variety $S\,\subset\, M$ of complex 
co-dimension at least $2$ such that ${\rm det}(\mathcal V)_{\vert M\setminus S}$ is a sub-bundle of $\bigwedge^{{\rm rk} (\mathcal V)} E$.
Hence, we have $\Phi_{mr}^{E}\,=\,{\rm Id}$ on the complement $M\setminus S$.
Since $M\setminus S$ is dense in $M$, we conclude that $\Phi_{mr}^{E}\,=\,{\rm Id}_E$ on $M$.
Hence the lemma follows.
\end{proof}

Lemma \ref{det} and Lemma \ref{lemSinv} together give the following:

\begin{corollary}\label{det2}
Let $E$ be a b-holomorphic vector bundle over a compact quasi-regular Sasakian manifold 
$(M,\, (T^{1,0}, \,S,\, I),\, (\eta, \,\xi))$. Assume that $E$ is a quasi-regular
basic vector bundle over $(M,\,{\mathcal F}_{\xi})$. Then, for any
coherent ${\mathcal O}_{B}$-subsheaf $\mathcal V$ of ${\mathcal O}_{B}(E)$, the
b-holomorphic line bundle ${\rm det}(\mathcal V)$ admits a basic Hermitian metric.
\end{corollary}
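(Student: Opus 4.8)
The plan is to combine the two preceding lemmas in the obvious way, so the proof will be essentially a one-line deduction, but let me lay out the logical dependencies carefully. The statement I want is that $\mathrm{det}(\mathcal V)$ admits a basic Hermitian metric, given that $E$ is quasi-regular and $\mathcal V$ is an arbitrary coherent $\mathcal{O}_B$-subsheaf of $\mathcal{O}_B(E)$. First I would invoke Lemma \ref{det}: since $E$ is quasi-regular, that lemma tells me directly that the b-holomorphic line bundle $\mathrm{det}(\mathcal V)$ is again quasi-regular as a basic vector bundle over $(M,\,{\mathcal F}_{\xi})$. This is the step that does the real work, because it converts a statement about the subsheaf into a statement about an honest b-holomorphic line bundle with the quasi-regularity property intact.

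Once I know $\mathrm{det}(\mathcal V)$ is quasi-regular, the second step is immediate: I apply Lemma \ref{lemSinv} to the quasi-regular basic line bundle $\mathrm{det}(\mathcal V)$. That lemma guarantees the existence of a $\Phi^{\mathrm{det}(\mathcal V)}$-invariant Hermitian metric, which in particular is a basic Hermitian metric. Chaining the two gives exactly the conclusion. So the proof proposal reads as follows.

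\begin{proof}
By Lemma \ref{det}, the hypothesis that $E$ is a quasi-regular basic vector bundle implies that the b-holomorphic line bundle $\mathrm{det}(\mathcal V)$ is also quasi-regular. Now applying Lemma \ref{lemSinv} to the quasi-regular basic bundle $\mathrm{det}(\mathcal V)$, we obtain a $\Phi^{\mathrm{det}(\mathcal V)}$-invariant Hermitian metric on $\mathrm{det}(\mathcal V)$, which is in particular basic. Hence $\mathrm{det}(\mathcal V)$ admits a basic Hermitian metric.
\end{proof}

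There is essentially no obstacle in the corollary itself, since it is a formal consequence already advertised in the line ``Lemma \ref{det} and Lemma \ref{lemSinv} together give the following.'' The genuine content lives upstream: the nontrivial point is Lemma \ref{det}, whose proof uses that $\mathrm{det}(\mathcal V)$ agrees with a sub-bundle of $\bigwedge^{\mathrm{rk}(\mathcal V)} E$ away from a transversely analytic subvariety $S$ of complex codimension at least $2$, together with a density argument to propagate $\Phi^{E}_{mr}=\mathrm{Id}$ across $S$. If I were to worry about anything in assembling the corollary, it would be only to make sure the quasi-regular (rather than merely regular) hypothesis is the one feeding into Lemma \ref{lemSinv}, since Lemma \ref{lemSinv} is stated precisely for quasi-regular bundles and produces a basic metric without needing regularity; the regularity refinement in Lemma \ref{det} is not needed for this particular conclusion.
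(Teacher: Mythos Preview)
Your proof is correct and follows exactly the approach indicated in the paper, which states the corollary as an immediate consequence of Lemma~\ref{det} and Lemma~\ref{lemSinv} without giving a separate proof. Your write-up spells out precisely the two-step deduction the paper has in mind.
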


\begin{example}
The b-holomorphic vector bundle $T^{1,0}$ is regular.
This follows easily from the lower semi-continuity of the function $\Lambda\,:\, M\,
\longrightarrow\, \R$ in Remark \ref{Wad}.
\end{example}

We say that a b-Higgs bundle $(E,\,\theta)$ over a compact quasi-regular Sasakian manifold $(M,\, (T^{1,0}, \,S,\, I),\,
(\eta, \,\xi))$ is quasi-regular (respectively, regular) if the basic vector bundle $E$ over $(M,\,{\mathcal F}_{\xi})$ is
quasi-regular (respectively, regular).

\begin{example}\label{PSL2g}
Let $\Gamma$ be a discrete subgroup in ${\rm PSL}_{2}(\R)$ such that the quotient $\Gamma\backslash{\rm PSL}_{2}(\R)$ is compact.
So $\Gamma$ is a Fuchsian subgroup without parabolic elements, and it is generated by 
\[a_{1},\,\cdots ,\,a_{g},\, b_{1},\,\cdots ,\,b_{g},\, x_{1},\,\cdots ,\,x_{m}
\]
with the relations
\begin{equation}\label{dm}
[a_{1},\,b_{1}]\dots [a_{g},\,b_{g}]x_{1}\dots x_{m}\,=\,I
\end{equation}
and $x_{j}^{p_j}\,=\,I$ for some positive integers $p_{j}\,>\,1$. Set $M\,=\,\Gamma\backslash {\rm PSL}_{2}(\R)$.
Let $p\,:\,\widetilde{\rm SL}_{2}(\R)\,\longrightarrow\, {\rm PSL}_{2}(\R)$ be the universal
covering, and define $\widetilde{\Gamma}\,:=\,p^{-1}(\Gamma)$.
Therefore, we have $M\,=\,\widetilde{\Gamma}\backslash \widetilde{\rm SL}_{2}(\R)$.
This $M$ is equipped with a Sasakian structure described in Example \ref{exsl1}.

Consider the universal covering map $p_{2}\,:\, \widetilde{\rm SL}_{2}(\R)\,\longrightarrow\, {\rm SL}_{2}(\R)$,
and define $\Gamma_{2}\,:=\,p_{2}(\widetilde{\Gamma})$.
Then $\Gamma_{2}\,=\, q^{-1}(\Gamma)$ for the quotient map $q\,:\,{\rm SL}_{2}(\R)\,\longrightarrow\,
{\rm PSL}_{2}(\R)$. We have $M\,=\,\Gamma_{2}\backslash {\rm SL}_{2}(\R)$.
The Sasakian manifold $(M,\, (T^{1,0},
\, S, \,I),\, (\eta, \,\xi))$ is quasi-regular, and the map $\varphi_{t}$ coincides with the right translation
action of $ \left(
\begin{array}{cc}
\cos t & -\sin t \\
\sin t & \cos t
\end{array}
\right)$.
Considering $M$ as $\Gamma\backslash {\rm PSL}_{2}(\R)$, the action $\psi\,:\, S^{1}\times 
M\,\longrightarrow\, M$ induced by $\{\varphi\}$ coincides with the right translation action of 
$S^{1}\,=\,q({\rm SO}(2))$ on ${\rm PSL}_{2}(\R)$, and we have $\varphi_{\pi}\,=\,{\rm Id}$. 
Since the left translation action of $\Gamma$ on $H\, :=\, {\rm PSL}_{2}(\R)/S^{1}$ is 
effective and properly discontinuous and has a fundamental polygon, we conclude that the 
period $r$ of the quasi-regular Sasakian manifold $(M,\, (T^{1,0}, \, S, \,I),\, (\eta, 
\,\xi))$ is $r\,=\,\pi$ and $\Gamma g\,\in\, M_{reg}$ if and only if $gS^{1}\,\in\, H$ is 
not fixed by any element in $\Gamma-\{{\rm I}\}$. If $\Gamma$ has no elliptic element, 
i.e., $m\,=\,0$ (see \eqref{dm}), then the Sasakian manifold $(M,\, (T^{1,0}M, \, S, 
\,I),\, (\eta, \,\xi))$ is actually regular.

We have $T^{1,0}\,\cong\, E_{2\sqrt{-1}}$, and hence the line bundle $E_{2\sqrt{-1}}$ is 
regular. The line bundle $E_{\sqrt{-1}}$ is quasi-regular but not regular, because we have 
$\Phi^{E_{\sqrt{-1}}}_{\pi}\,=\,-{\rm Id}$. Consider the b-Higgs bundle 
\begin{equation}\label{et}
(E\,=\,E_{\sqrt{-1}}\oplus E_{-\sqrt{-1}},\,\,\theta\,=\,w\otimes 
e^{\ast}_{\sqrt{-1}}\otimes e_{\sqrt{-1}})
\end{equation}
as in Example \ref{exsl1}. This b-Higgs bundle $(E,\, \theta)$
is quasi-regular. The Hermite-Einstein metric $h$ for this b-Higgs bundle is in fact 
$\Phi^{E}$-invariant. This metric $h$ is harmonic. As we saw in Example \ref{Exsl22}, the 
corresponding flat bundle $(E,\,D^{h})$ is isomorphic to the flat bundle whose
monodromy representation $\widetilde{\Gamma}\,\longrightarrow\, {\rm SL}_{2}(\C)$
is the homomorphism $\widetilde{\Gamma}\,\stackrel{p_2}{\longrightarrow}\,\Gamma_{2}
\,\subset\, {\rm SL}_{2}(\C)$.
 
Assume that $\Gamma$ contains no element of order $2$. Then we have an isomorphism 
$$\Gamma_{2}\,\cong \,\Gamma\times ({\Z}/2{\Z})$$ (see the proof of \cite[Theorem 1]{Pa}). 
Let $$\tau\,:\, \widetilde{\Gamma}\,\longrightarrow\, {\rm GL}_{1}(\C)$$ be the composition 
of the homomorphism $\widetilde{\Gamma}\,\stackrel{p_2}{\longrightarrow}\, \Gamma_{2} 
\,\cong\, \Gamma\times (\Z/2\Z)$ and the natural projection $$\Gamma\times \Z/2\Z\,\longrightarrow\, 
{\Z}/2\Z \,=\, \{\pm 1\}\,\subset\, {\rm GL}_{1}(\C)\, .$$ Consider the flat line bundle 
$E^{\tau}$ over $M\,=\,\widetilde{\Gamma}\backslash \widetilde{\rm SL}_{2}(\R)$ given by 
this homomorphism $\tau$. Then we have $\Phi^{E^{\tau}}_{\pi}\,=\,-{\rm Id}$. Thus, 
$E^{\tau}_{\sqrt{-1}}\,=\,E^{\tau}\otimes E_{\sqrt{-1}}$ is regular. For the b-Higgs bundle 
$(E,\,\theta)$ in \eqref{et}, the vector
bundle $E^{\tau}\otimes E$ is regular. Since $E^{\tau}$ is unitary flat, 
the Hermite-Einstein metric $h^{\prime}$ for $(E^{\tau}\otimes E, \,\theta)$ is also 
harmonic. The corresponding flat bundle $(E^{\tau}\otimes E,\, D^{h^{\prime}})$ is isomorphic to the flat 
bundle whose monodromy representation $\widetilde{\Gamma}\,\longrightarrow\, {\rm 
SL}_{2}(\C)$ is the composition of the homomorphism $\widetilde{\Gamma}\, 
\stackrel{p_2}{\longrightarrow}\, \Gamma_{2} \,\cong\, \Gamma\times (\Z/2\Z)$ with the 
projection $\Gamma\times (\Z/2\Z)\,\longrightarrow \,\Gamma\, \subset\, {\rm SL}_{2}(\C)$ to 
the first factor.

\begin{remark}
The isomorphisms $\Gamma_{2}\,\cong\, \Gamma\times (\Z/2\Z)$ are classified by $H^{1}(\Gamma,\, \Z/2\Z)$.
If $\Gamma$ has no elliptic element, then $H^{1}(\Gamma,\, \Z/2\Z)\cong (\Z/2\Z)^{2g}$.
\end{remark}
\end{example}

\subsection{Orbifolds}\label{Orbsec}

An $n$-dimensional complex {\em orbifold} is a paracompact Hausdorff space $X$ with a family ${\mathcal U}\,=\,\{(U_{\alpha},\,
\widetilde{U}_{\alpha},\,\Gamma_{\alpha},\, \phi_{\alpha})\}_{\alpha}$ such that:
\begin{itemize}
\item For each $\alpha$,
\begin{equation}\label{ual}
\widetilde{U}_{\alpha}\, \subset\, \C^n
\end{equation}
is a connected open subset containing the origin, $\Gamma_{\alpha}$ is a
finite subgroup in the unitary group ${\rm U}(n)$ and $\phi_{\alpha}$ is a $\Gamma_{\alpha}$-invariant continuous map from
$\widetilde{U}_{\alpha}$ into an open subset $U_{\alpha}$ in $M$ which induces a homeomorphism between $\widetilde{U}_{\alpha}/\Gamma_{\alpha}$
and $U_{\alpha}$.

\item $M\, =\, \bigcup_{\alpha} U_{\alpha}$.

\item For any two $(U_{\alpha},\,\widetilde{U}_{\alpha},\, \Gamma_{\alpha},\, \phi_{\alpha})$ and
$(U_{\beta}, \,\widetilde{U}_{\beta},\,\Gamma_{\beta},\, \phi_{\beta})$ with $U_{\alpha}\cap U_{\beta}\,\not=\,\emptyset$,
for every $x\,\in\, U_{\alpha}\cap U_{\beta}$ there exists
$(U_{\gamma},\, \widetilde{U}_{\gamma},\, \Gamma_{\gamma},\, \phi_{\gamma})$ satisfying the conditions that $x\,\in\, U_{\gamma}$ and there are
holomorphic embeddings $$\lambda_{\alpha\gamma}\,:\,\widetilde{U}_{\gamma}\,\longrightarrow\, \widetilde{U}_{\alpha}\ \ \text{ and }\ \
\lambda_{\beta\gamma}\,:\,\widetilde{U}_{\gamma}\,\longrightarrow\, \widetilde{U}_{\beta}$$
such that $\phi_{\alpha}\circ \lambda_{\alpha\gamma}\,=\,\phi_{\gamma}$ and $\phi_{\beta}\circ \lambda_{\beta\gamma}\,=\,\phi_{\gamma}$.
\end{itemize}

If every finite group $\Gamma_{\alpha}$ in the above definition is trivial, then the 
orbifold $(X,\,{\mathcal U})$ is a complex manifold of dimension $n$. A Hermitian metric on 
a complex orbifold $(X,\,{\mathcal U})$ is a family $\{g_{\alpha}\}_{\alpha}$ consisting of 
a Hermitian metric $g_\alpha$ on each $\widetilde{U}_{\alpha}$ which is 
$\Gamma_{\alpha}$-invariant and all $\lambda_{\alpha\gamma}$ are isometries. A Hermitian 
metric on a complex orbifold $(X,\,{\mathcal U})$ is K\"ahler if each $g_{\alpha}$ is a 
K\"ahler metric on $\widetilde{U}_{\alpha}$. We say that a complex orbifold $(X,\,{\mathcal 
U})$ is {\em locally cyclic} if all the finite groups $\Gamma_{\alpha}$ are cyclic.

A differential form on $(X,\,{\mathcal U})$ is a family $\{\omega_{\alpha}\in 
A^{\ast}(\widetilde{U}_{\alpha})\}_{\alpha}$ such that each $\omega_{\alpha}$ is 
$\Gamma_{\alpha}$-invariant and they are compatible with the transition maps 
$\lambda_{\alpha\gamma}$. The de Rham complex $(A^{\ast}(X),\, d)$ and the Dolbeault 
complex $(A^{\ast,\ast}(X),\, \partial,\,\overline\partial)$ of a complex orbifold 
$(X,\,{\mathcal U})$ are defined in the standard manner. As usual, de Rham's theorem says 
that the cohomology of the de Rham complex $(A^{\ast}(X),\, d)$ of a complex orbifold 
$(X,\, {\mathcal U})$ is isomorphic to the cohomology $H^{\ast}(X,\,\R)$ of the underlying 
topological space (see \cite{Sat}).

Let $(M,\, (T^{1,0}, \,S,\, I),\, (\eta, \,\xi))$ be a compact quasi-regular Sasakian manifold with the period $r$. Consider the circle action 
$\psi\,:\, S^{1}\times M\,\longrightarrow\, M$ induced by the flow $\varphi\,:\, \R\times M \,\longrightarrow\, M$ generated by the Reeb
vector field $\xi$. By the slice theorem (cf. \cite[Theorem L2.1]{Au}), 
the quotient $X\,=\, M/S^{1}$ admits a canonical complex orbifold structure $(X,\, {\mathcal U})$. To
explain this with more details, we take ${\mathcal U}$ to be consisting 
of $\{(U_{x},\, \widetilde{U}_{x},\, \Gamma_{x},\, \phi_{x})\}_{x\in X}$, where $U_{x}$ is the quotient of a $S^{1}$-invariant neighborhood of 
$x$, $\widetilde{U}_{x}$ is an open neighborhood of $0\,\in\, T^{1,0}_{x}$ and $\Gamma_{x}$ is the finite group generated by the linear map 
$\Phi^{T^{1,0}}_{\Lambda(x)}$ on $T^{1,0}_{x}$ (the notations $\Phi$ and $\Lambda(x)$ were introduced in Section \ref{q-rv}). Recall that 
$\Gamma_{x}$ acts complex linearly on $T^{1,0}_{x}$. Note that $M_{reg}$ is connected. Since 
$\Gamma_{x}$ is a subgroup in $S^{1}$, this complex orbifold $(X, \,{\mathcal U})$ is locally cyclic. Let
$$X_{reg}\,=\,\{S^{1}x\,\in\, X\,\mid\, x\,\in \,M_{reg}\}.$$ Then $X_{reg}$ 
is a connected complex manifold. It is straight-forward to check that the transversely K\"ahler structure $d\eta$ induces a
K\"ahler metric on the orbifold $X$.

\begin{remark}[{\cite{ALR, BoG}}]\label{Orbifun}
It is known that any orbifold $X$ can be represented as the quotient of a manifold $M$ by a locally free action of a compact Lie group $K$.
Let $E_K\, \longrightarrow\, B_K$ be the universal principal $K$--bundle.
Define the topological space $BX\,:=\, E_K\times_{K} M$.
The homotopy type of $BX$ depends only on the orbifold structure on $X$.
We define the orbifold cohomology $H^{k}_{orb}(X,\,R)$, with coefficients in a ring $R$, and the orbifold homotopy group $\pi_{k}^{orb}(X,\,x)$
to be $$H^{k}_{orb}(X,\, R)\,=\, H^{k}(BX, R)\qquad{\rm and} \qquad \pi_{k}^{orb}(X,\,x)\,=\, \pi_{k}(BX,\, \widetilde{x})$$
respectively, where $\widetilde{x}\,\in\, BX$ is a lift of $x$ for the natural projection $BX\,\longrightarrow\, X$. The above
defined $\pi_{1}^{orb}(X,\, x)$ is called the orbifold fundamental group of $X$.
In general, $H^{k}_{orb}(X,\, \Z)$ is not isomorphic to the cohomology $H^{k}(X,\, \Z)$ of the underlying topological space.
On the other hand, $H^{k}_{orb}(X,\,\Q)$ is naturally isomorphic to $H^{k}(X,\,\Q)$.

Let $(X,\,{\mathcal U})$ be a complex orbifold with a K\"ahler metric $\{g_{\alpha}\}_{\alpha}$.
By Satake's orbifold de Rham theorem, $\{g_{\alpha}\}_{\alpha}$ gives a cohomology class $[\omega]\,\in\, H^{2}(X,\, \R)$.
We say that $\{g_{\alpha}\}_{\alpha}$ is a Hodge metric if the cohomology class $[\omega]$ lies in the image of the homomorphism
$$H^{2}_{orb}(X,\,\Z)\,\longrightarrow\, H^{2}_{orb}(X,\,\R)\,\cong\, H^{2}(X,\,\R).$$
It is known that for a compact quasi-regular Sasakian manifold
$(M,\, (T^{1,0}, \,S,\, I),\, (\eta, \,\xi))$ 
the K\"ahler metric on the complex orbifold $X\,=\,M/S^{1}$
induced by $d\eta$ is a Hodge metric, and conversely, for
a locally cyclic complex orbifold $X$ with a Hodge metric $\{g_{\alpha}\}_{\alpha}$, there exists a quasi-regular Sasakian manifold
$(M,\, (T^{1,0}, \,S,\, I),\, (\eta, \,\xi))$ such that $X\,=\,M/S^{1}$ and $\{g_{\alpha}\}_{\alpha}$ is induced by $d\eta$ 
(see \cite[Chapter 7]{BoG} and also \cite[Theorem 4.47, 4.50]{RC}).

By an analog of the Kodaira embedding theorem, a complex orbifold $X$ with a Hodge metric is a projective variety \cite{Bai}.
It may be mentioned that the orbifold fundamental group is an invariant for orbifolds but not so for algebraic varieties
(see the proof of \cite[Lemma 4.5.6]{BoG}).
\end{remark}

For $x\,\in\, M_{reg}$, we consider the induced homomorphism $\psi(x)_{\ast}\,:\,\pi_{1}(S^1,\,1)\,\longrightarrow\, \pi_{1}(M, \,x)$
in terms of orbifolds.
We know that the quotient map $M\,\longrightarrow\, X$ can be lifted to a principal $S^{1}$-fibration $BM\,\longrightarrow\, BX$
(see \cite[Proposition 5.4]{HS}), and this induces an exact sequence
\begin{equation}\label{hes}
\xymatrix{
0\ar[r]&\pi_{2}(M,\, x) \ar[r]&\pi_{2}^{orb}(X,\,S^{1}x)\ar[r]&\pi_{1}(S^1,\, 1)\ar[r]^{\psi(x)_{\ast}}&\pi_{1}(M,\, x) \ar[r]&
\pi_{1}^{orb}(X,\, S^{1}x)\ar[r]&1.
}
\end{equation}

\begin{lemma}
Let $(E,\,D)$ be a flat bundle over $M$, and let $\rho\,:\,\pi_{1}(M,\, x)\,\longrightarrow\, {\rm GL}(E_{x})$ be the monodromy
representation for $(E,\,D)$ with $x\,\in\, M_{reg}$.
Then, $E$ is a quasi-regular (respectively, regular) basic vector bundle if and only if the image of the composition of
homomorphisms
$$\rho\circ \psi(x)_{\ast}\,:\,\pi_{1}(S^1,\,1) \,\longrightarrow\, {\rm GL}(E_{x})$$ is finite (respectively, trivial).
\end{lemma}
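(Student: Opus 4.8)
The plan is to identify the fiberwise restriction of $\Phi^{E}_{r}$ with the monodromy of $(E,\,D)$ around the orbit of $x$, and then to propagate a fiberwise condition at the single point $x\,\in\, M_{reg}$ to all of $E$ using the connectedness of $M_{reg}$ together with its density in $M$. First I would record the basic identification. Since $(E,\,D)$ is regarded as a basic vector bundle via the restriction of $D$ to $T\mathcal{F}_{\xi}$, the associated flat partial $\mathcal{F}_{\xi}$-connection $\nabla$ is exactly this restriction, and $\Phi^{E}_{t}$ is by definition the $\nabla$-parallel transport along $s\,\longmapsto\,\varphi_{s}(x)$. For $x\,\in\, M_{reg}$ the period of $x$ equals $r$, so $\gamma_{x}\,:\, s\,\longmapsto\,\varphi_{sr}(x)$, $s\,\in\,[0,1]$, is a loop at $x$ whose class is $\psi(x)_{\ast}(1)\,\in\,\pi_{1}(M,\,x)$; here we use $\varphi_{r}\,=\,\mathrm{Id}_{M}$. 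Because the $D$-parallel transport along a leaf agrees with that of $\nabla$, the endomorphism $\Phi^{E}_{r}\vert_{E_{x}}$ of $E_{x}$ is precisely the $D$-holonomy around $\gamma_{x}$, that is $\Phi^{E}_{r}\vert_{E_{x}}\,=\,\rho(\psi(x)_{\ast}(1))$, up to the usual inversion convention, which is immaterial below. Writing $g_{x}\,:=\,\rho(\psi(x)_{\ast}(1))$, the image of $\rho\circ\psi(x)_{\ast}$ is the cyclic group $\langle g_{x}\rangle$, so it is trivial exactly when $g_{x}\,=\,\mathrm{Id}$ and finite exactly when $g_{x}$ has finite order; more generally $\Phi^{E}_{mr}\vert_{E_{x}}\,=\,g_{x}^{m}$ for every $m\,\ge\,1$.

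Given this identification, the two ``only if'' implications are immediate. If $E$ is regular then $\Phi^{E}_{r}\,=\,\mathrm{Id}$ on all of $E$, so $g_{x}\,=\,\Phi^{E}_{r}\vert_{E_{x}}\,=\,\mathrm{Id}$ and the image is trivial. If $E$ is quasi-regular, with $\Phi^{E}_{mr}\,=\,\mathrm{Id}$ for some $m$, then $g_{x}^{m}\,=\,\Phi^{E}_{mr}\vert_{E_{x}}\,=\,\mathrm{Id}$, so $g_{x}$ has finite order and the image $\langle g_{x}\rangle$ is finite.

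For the converses I would argue as follows. Suppose first that $g_{x}\,=\,\mathrm{Id}$. Since $\varphi_{r}\,=\,\mathrm{Id}_{M}$, the smooth map $\Phi^{E}_{r}$ is a bundle automorphism of $E$ covering the identity of $M$. As $M_{reg}$ is connected, any $y\,\in\, M_{reg}$ can be joined to $x$ by a path $\alpha$ in $M_{reg}$, and $H(s,\,u)\,=\,\varphi_{sr}(\alpha(u))$ is a free homotopy between $\gamma_{x}$ and $\gamma_{y}$; consequently $\Phi^{E}_{r}\vert_{E_{y}}$ is conjugate, via the $D$-parallel transport $E_{x}\,\longrightarrow\, E_{y}$ along $\alpha$, to $g_{x}$. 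As $g_{x}\,=\,\mathrm{Id}$, we obtain $\Phi^{E}_{r}\vert_{E_{y}}\,=\,\mathrm{Id}$ for every $y\,\in\, M_{reg}$, that is $\Phi^{E}_{r}\,=\,\mathrm{Id}$ on $E\vert_{M_{reg}}$. Since $\Phi^{E}_{r}$ is continuous and $M_{reg}$ is dense in $M$, it follows that $\Phi^{E}_{r}\,=\,\mathrm{Id}$ on $E$, so $E$ is regular. If instead $\langle g_{x}\rangle$ is only finite, choose $m\,\ge\,1$ with $g_{x}^{m}\,=\,\mathrm{Id}$; then $\Phi^{E}_{mr}\vert_{E_{x}}\,=\,\mathrm{Id}$, and applying the same free-homotopy-plus-density argument to the automorphism $\Phi^{E}_{mr}$, which covers $\varphi_{mr}\,=\,\mathrm{Id}_{M}$ and whose orbit loops $\gamma_{y}^{m}$ remain freely homotopic across the connected $M_{reg}$, yields $\Phi^{E}_{mr}\,=\,\mathrm{Id}$ on $E$, so $E$ is quasi-regular.

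The main obstacle is exactly this propagation in the converse directions. One cannot spread the fiberwise identity directly over all of $M$, because the local period $\Lambda$ drops below $r$ on $M\setminus M_{reg}$, where $\Phi^{E}_{r}\vert_{E_{z}}$ may well be a nontrivial root of unity; in particular the locus on which $\Phi^{E}_{r}$ (or $\Phi^{E}_{mr}$) restricts to the identity need not be open. The correct route, mirroring the density argument in the proof of Lemma \ref{det}, is therefore to first spread the condition over the connected regular locus $M_{reg}$, where all orbit loops share a single free homotopy class and hence give conjugate monodromies, and only afterwards to pass to the closure using the continuity of $\Phi^{E}$ and the density of $M_{reg}$ in $M$.
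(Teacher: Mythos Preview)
Your argument is correct and follows the same route as the paper's proof: identify $\Phi^{E}_{r}\vert_{E_{x}}$ with the monodromy $\rho(\psi(x)_{\ast}(1))$, read off the ``only if'' directions, and for the converses spread the identity over $M_{reg}$ and then to all of $M$ by density. The paper simply asserts that $\rho\circ\psi(x)_{\ast}(m)\,=\,{\rm Id}$ implies $\Phi^{E}_{mr}\vert_{E_{y}}\,=\,{\rm Id}$ for every $y\,\in\,M_{reg}$ without explanation; your free-homotopy/conjugacy argument across the connected set $M_{reg}$ is exactly the justification that fills this gap.
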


\begin{proof}
We identify $\Z\,=\,\pi_{1}(S^1,\,1)$ so that for the loop
$\gamma_{x}\,:\, [0,\, r]\, \longrightarrow\, M$ defined by $t\, \longmapsto\, \varphi_{t}(x)$ we have
$[\gamma_{x}]\,=\, \psi(x)_{\ast}(1)\,\in\, \pi_{1}(M, \,x)$.
Then by the definition of $\Phi^{E}$ we have $\rho\circ\psi(x)_{\ast}(1)\,=\, \rho([\gamma])\,=\,\Phi^{E}_{r}$.
If $E$ is a quasi-regular basic bundle, there exists $m\,\in\, \Z$ such that $\Phi^{E}_{mr}\,=\,{\rm Id}$,
and hence $\rho\circ \psi(x)_{\ast}(m)\,=\,{\rm Id}$.

Conversely, if $\rho\circ \psi(x)_{\ast}\,:\,\pi_{1}(S^1,1)\,\longrightarrow\, {\rm GL}(E_{x})$ has a finite image,
then there exists $m\,\in\, \Z$ such that $\rho\circ \psi(x)_{\ast}(m)\,=\,{\rm Id}$, and
hence $\Phi^{E}_{mr}\big\vert_{E_y}
\,=\,{\rm Id}_{E_y}$ for all points $y\, \in\, M_{reg}$.
Since $M_{reg}$ is open dense in $M$, this implies that $\Phi^{E}_{mr}\,=\,{\rm Id}$.

The statement on the regularity follows by setting $m\,=\,1$ in the above arguments.
\end{proof}

\begin{corollary}\label{qqfla}
If the homomorphism $\psi(x)_{\ast}\,:\,\pi_{1}(S^1,\,1)\,\longrightarrow\, \pi_{1}(M,\, x)$ has a finite (respectively, trivial) image,
then every flat bundle over $M$ is quasi-regular (respectively, regular). 
\end{corollary}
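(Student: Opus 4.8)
The plan is to reduce this immediately to the preceding Lemma, exploiting that the finiteness (respectively, triviality) of the image of the composed homomorphism $\rho\circ\psi(x)_{\ast}$ is dictated by the hypothesis on $\psi(x)_{\ast}$ alone, uniformly over all flat bundles $(E,\,D)$.

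First I would fix an arbitrary flat bundle $(E,\,D)$ on $M$ with monodromy representation $\rho\,:\,\pi_{1}(M,\,x)\,\longrightarrow\,{\rm GL}(E_{x})$, taking the basepoint $x\,\in\, M_{reg}$ as in the hypothesis; recall that $M_{reg}$ is open and dense, hence nonempty, by Remark \ref{Wad}. The essential observation is that the image of the homomorphism $$\rho\circ\psi(x)_{\ast}\,:\,\pi_{1}(S^{1},\,1)\,\longrightarrow\,{\rm GL}(E_{x})$$ equals the image under $\rho$ of the subgroup $\psi(x)_{\ast}(\pi_{1}(S^{1},\,1))\,\subset\,\pi_{1}(M,\,x)$; in particular the whole composition factors through this subgroup.

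I would then split into the two cases of the statement. If $\psi(x)_{\ast}$ has finite image, then $\psi(x)_{\ast}(\pi_{1}(S^{1},\,1))$ is a finite group, so its image under the homomorphism $\rho$ is again finite; hence $\rho\circ\psi(x)_{\ast}$ has finite image, and the preceding Lemma shows that $E$ is a quasi-regular basic vector bundle. If $\psi(x)_{\ast}$ has trivial image, then $\rho\circ\psi(x)_{\ast}$ is the trivial homomorphism no matter what $\rho$ is, and the preceding Lemma yields that $E$ is regular.

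I do not expect any genuine obstacle here: the corollary is a formal consequence of the preceding Lemma. The only point worth emphasizing is that the conclusion must hold for \emph{every} flat bundle simultaneously, and this is guaranteed precisely because the image of $\rho\circ\psi(x)_{\ast}$ is controlled by $\psi(x)_{\ast}$ and is insensitive to the particular choice of representation $\rho$.
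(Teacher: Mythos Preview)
Your proposal is correct and matches the paper's approach: the corollary is stated there without proof, as an immediate consequence of the preceding Lemma, which is precisely the reduction you carry out.
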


\begin{corollary}\label{regflll}
Let $(E,\,D)$ be a flat bundle over $M$, and let $\rho\,:\,\pi_{1}(M,\, x)\,\longrightarrow\,
{\rm GL}(E_{x})$ be the monodromy representation
for $(E,\, D)$ with $x\,\in\, M_{reg}$. Then, $E$ is a regular basic vector bundle if and only if
$\rho$ factors through the quotient group $\pi^{orb}_{1}(X, \,S^{1}x)$ of $\pi_1(M,\, x)$ in \eqref{hes}.
\end{corollary}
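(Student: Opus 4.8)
The plan is to deduce this directly from the preceding lemma together with the exactness of the sequence \eqref{hes}, so the argument is essentially a piece of group theory once those two inputs are in place. First I would recall that, by that lemma, $E$ is a regular basic vector bundle precisely when the composition $\rho\circ\psi(x)_{\ast}\,:\,\pi_{1}(S^1,\,1)\,\longrightarrow\,{\rm GL}(E_{x})$ has trivial image. Since the image of $\psi(x)_{\ast}\,:\,\pi_1(S^1,\,1)\,\longrightarrow\,\pi_1(M,\,x)$ is exactly the cyclic subgroup generated by $\psi(x)_{\ast}(1)$, this condition is literally the same as saying that $\rho$ is trivial on the subgroup ${\rm image}(\psi(x)_{\ast})\,\subset\,\pi_1(M,\,x)$.

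Next I would extract from \eqref{hes} the cokernel description of the orbifold fundamental group. By the exactness of \eqref{hes} at $\pi_1(M,\,x)$ and at $\pi_1^{orb}(X,\,S^{1}x)$, the homomorphism $\pi_1(M,\,x)\,\longrightarrow\,\pi_1^{orb}(X,\,S^{1}x)$ is surjective with kernel equal to ${\rm image}(\psi(x)_{\ast})$; in particular this image is a normal subgroup, and hence $\pi_1^{orb}(X,\,S^{1}x)\,=\,\pi_1(M,\,x)/{\rm image}(\psi(x)_{\ast})$. This is the one place where one must read the five-term sequence carefully, but it is a routine consequence of exactness.

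Finally I would invoke the universal property of the quotient: a representation $\rho$ of $\pi_1(M,\,x)$ factors through $\pi_1(M,\,x)/{\rm image}(\psi(x)_{\ast})$ if and only if $\rho$ is trivial on the normal subgroup ${\rm image}(\psi(x)_{\ast})$, which by the first paragraph is equivalent to $\rho\circ\psi(x)_{\ast}$ being trivial, which by the preceding lemma is equivalent to $E$ being regular. Chaining these three equivalences yields the asserted statement. There is no real obstacle here; every implication is an immediate application of a result already available (the preceding lemma, the exactness of \eqref{hes}, and the universal property of quotient groups), and the only point requiring attention is identifying $\pi_1^{orb}(X,\,S^{1}x)$ with the cokernel of $\psi(x)_{\ast}$ so that ``factoring through $\pi_1^{orb}(X,\,S^{1}x)$'' becomes the triviality statement supplied by the lemma.
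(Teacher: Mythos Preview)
Your argument is correct and is exactly the intended one: the paper states this as a corollary with no separate proof, so it is meant to follow immediately from the preceding lemma (regularity $\Leftrightarrow$ triviality of $\rho\circ\psi(x)_\ast$) together with the exactness of \eqref{hes}, which identifies $\pi_1^{orb}(X,\,S^1x)$ with the cokernel of $\psi(x)_\ast$. You have spelled out precisely that deduction.
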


\begin{example}
Consider the $2n+1$-dimensional real Heisenberg group 
\[H_{2n+1}(\R)\,=\,\left\{\left(
\begin{array}{ccc}
1& x&z \\
0& I &\,^{t} y\\
0&0&1
\end{array}
\right)\,\, \Big\vert\,\, x,\,y \,\in\, \R^{n},\,\, z\,\in\, \R\right\}
\]
where $I$ is the $n\times n$ unit matrix. This group
$H_{2n+1}(\R)$ admits a left-invariant Sasakian structure.
Thus, for the discrete subgroup $\Gamma\,:=\, H_{2n+1}(\R)\bigcap {\rm GL}_{n+2}(\Z)$, the nilmanifold
$\Gamma\backslash H_{2n+1}(\R)$ is a compact Sasakian manifold.
This $\Gamma\backslash H_{2n+1}(\R)$ is a regular Sasakian manifold. It is straight-forward to
check that the homomorphism
$$\psi(x)_{\ast}\,:\,\pi_{1}(S^1,\,1)\,=\,\Z\,\longrightarrow\, \pi_{1}(M, \,x)\,=\,\Gamma$$
is the natural central extension 
\[\xymatrix{
0\ar[r]&\Z\ar[r]&\pi_{1}(M,\, x) \ar[r]&\Z^{2n}\ar[r]&1.
}
\]
For the canonical representation $$\rho\,:\, \Gamma\,\longrightarrow\, H_{2n+1}(\R)\,\subset\, {\rm GL}_{n+2}(\C)\, ,$$ the image of
$\rho\circ \psi(x)_{\ast}\,:\, \Z\,\longrightarrow\,{\rm GL}_{n+2}(\C)$ is 
\[\left\{\left(
\begin{array}{ccc}
1& 0& z\\
0& I &\,^{t} 0\\
0&0&1
\end{array}
\right)\,\,\Big\vert\,\, z\,\in\, \Z \right\}\]
and hence it is not finite.
Thus the flat vector bundle corresponding to the representation $\rho$ is not quasi-regular.
\end{example}

A holomorphic vector orbibundle over a complex orbifold $(X,\, {\mathcal U})$ consists of holomorphic vector bundles $p_{\alpha}\,:\,
E_{\widetilde{U}_{\alpha}}
\,\longrightarrow\, \widetilde{U}_{\alpha}$, where
$\widetilde{U}_{\alpha}$ is as in the definition of
orbifolds (see \eqref{ual}), together with homomorphisms $\nu_{\alpha}\,:\,\Gamma_{\alpha}
\,\longrightarrow\,{\rm GL}(E_{\widetilde{U}_{\alpha}})$ satisfying the following conditions:
\begin{itemize}
\item $p_{\alpha}(\nu_{\alpha}(\gamma)(b))\,=\,\gamma^{-1}p_{\alpha}(b)$ for
$b\,\in\, E_{\widetilde{U}_{\alpha}}$ and $\gamma\,\in\, \Gamma_{\alpha}$, and

\item for any $\lambda_{\alpha\gamma}\,:\,\widetilde{U}_{\gamma}\,\longrightarrow\, \widetilde{U}_{\alpha}$, there exists
a bundle map $\Lambda_{\alpha\gamma}\,:\,p^{-1}_{\alpha}(\lambda_{\alpha\gamma}(\widetilde{U}_{\gamma}))\,
\longrightarrow\, E_{\gamma}$ such that for $g_{\gamma}\,\in\, \Gamma_{\gamma}$ and $g_{\alpha}\,\in\, \Gamma_{\alpha}$
with $\lambda_{\alpha\gamma}\circ g_{\gamma}\,=\,g_{\alpha}\circ \lambda_{\alpha\gamma}$, the equality
$$\nu_{\gamma}(g_{\gamma})\circ \Lambda_{\alpha\gamma}\,=\,\Lambda_{\alpha\gamma}\circ \nu_{\alpha}(g_{\alpha})$$ holds.
\end{itemize}

Define the holomorphic tangent bundle $T^{1,0}X $ of $(X,\, {\mathcal U})$ by $T^{1,0}\widetilde{U}_{\alpha}$ with 
$\nu_{\alpha}\,:\,\Gamma_{\alpha}\,\longrightarrow\, {\rm GL}(E_{\widetilde{U}_{\alpha}})$ being given by the differential of
the action of $\Gamma_{\alpha}$ on ${U}_{\alpha}$.

For a holomorphic vector orbibundle $E\,\longrightarrow\, X$, a \textit{holomorphic section} of $E$ is a $\Gamma_{\alpha}$-invariant 
holomorphic section of $E_{\widetilde{U}_{\alpha}}$ for every $\alpha$ satisfying a natural compatibility condition with the transition maps 
for $E$. A Hermitian metric on $E$ is a $\Gamma_{\alpha}$-invariant Hermitian metric $h_{\alpha}$ on $E_{\widetilde{U}_{\alpha}}$ for
each $\alpha$ such that a natural compatibility condition with the transition structure for $E$ is satisfied.
A Higgs orbibundle over a complex orbifold $(X,\,{\mathcal U})$ is a pair $(E,\,\theta)$, where $E$ is a
holomorphic vector orbibundle and $\theta \,=\,\{\theta_{\alpha}\}_{\alpha}$ is a holomorphic section
of $(T^{1,0}X)^{\ast}\otimes {\rm End}(E)$ satisfying the condition $\theta_{\alpha} \wedge \theta_{\alpha}\,=\,0$ for every $\alpha$.
For a Higgs orbibundle $(E,\,\theta)$ over $(X,\,{\mathcal U})$, each $(E_{\widetilde{U}_{\alpha}},\,\theta_{\alpha})$ is a Higgs bundle
over the complex manifold $\widetilde{U}_{\alpha}$ and $\theta_{\alpha}\,\in\, A^{1,0}(\widetilde{U}_{\alpha},\,
E_{\widetilde{U}_{\alpha}})$ is $\Gamma_{\alpha}$-invariant.

Assume that the complex orbifold $(X,\,{\mathcal U})$ admits a 
a K\"ahler metric $\{g_{\alpha}\}_{\alpha}$. For a Higgs bundle $(E,\,\theta)$, a hermitian metric
$\{h_{\alpha}\}_{\alpha}$ on $E$ is Hermite-Einstein if $h_{\alpha}$ is a Hermite-Einstein metric on the Higgs bundle
$(E_{\widetilde{U}_{\alpha}},\, \theta)$ over the K\"ahler manifold $(\widetilde{U}_{\alpha}, \,g_{\alpha})$ for every $\alpha$.

Let $(M,\, (T^{1,0}, \,S,\, I),\, (\eta, \,\xi))$ be a compact quasi-regular Sasakian manifold whose period is $r$.
We consider the quotient $X\,=\,M/S^{1}$ with the earlier mentioned canonical complex orbifold structure $(X,\, {\mathcal U})$.
Suppose a holomorphic vector orbibundle $E\,\longrightarrow\, X$ is given.
Then we construct a vector bundle $\widetilde{E}\,\longrightarrow\, M$ as follows.
For each coordinate chart
$(U_{x},\, \widetilde{U}_{x}, \,\Gamma_{x},\, \phi_{x})\,\in \,{\mathcal U}$ as above, consider
$O_{x}\,=\,S^{1}\times_{\Gamma_{x}} \widetilde{U}_{x}$ as an open neighborhood of $x\, \in\, M$ in a natural way. Define
the vector bundle $\widetilde{E}_{O_{x}}\,=\, S^{1}\times_{\Gamma_{x}} E_{\widetilde{U}_{x}}$ on $O_x$.
We can easily check that this is a basic vector bundle, and $\Phi^{\widetilde{E}}$ corresponds to the $S^{1}$-action on
$\widetilde{E}_{O_{x}}\,=\,S^{1}\times_{\Gamma_{x}} E_{\widetilde{U}_{x}}$. Therefore, this $\widetilde E$ is a regular vector bundle over a compact
quasi-regular Sasakian manifold $(M,\, (T^{1,0}, \,S,\, I),\, (\eta, \,\xi))$.
A $\Phi^{\widetilde{E}}$-invariant Hermitian metric $\widetilde{h}$ on $\widetilde{E}$ defines a $\Gamma_{x}$-invariant
Hermitian metric $h_{\widetilde{U}_{x}}$ on $E_{\widetilde{U}_{x}}$, and the family $\{h_{\widetilde{U}_{x}}\}$ is a Hermitian metric on the
orbibundle $E\,\longrightarrow\, X$.

For a Higgs orbibundle $(E,\,\theta)$ over the complex orbifold $(X,\, {\mathcal U})$, we have the regular b-Higgs bundle 
$(\widetilde{E},\,\widetilde{\theta})$ over the compact Sasakian manifold $(M,\, (T^{1,0}, \,S,\, I),\, (\eta, \,\xi))$. If a 
$\Phi^{\widetilde{E}}$-invariant Hermitian metric $\widetilde{h}$ on $\widetilde{E}$ is Hermite-Einstein for $(\widetilde{E},\,
\widetilde{\theta})$, then the 
Hermitian metric $\{h_{\widetilde{U}_{x}}\}_{x\in X}$ on the orbibundle $E\,\longrightarrow\, X$ induced by $\widetilde{h}$ is
Hermite-Einstein for $(E,\,\theta)$. Thus, by Theorem \ref{tthbk0}, we have the following:

\begin{theorem}\label{orbiHE}
For a Higgs orbibundle $(E,\,\theta)$ over the complex orbifold $(X, \,{\mathcal U})$, if 
the corresponding b-Higgs bundle $(\widetilde{E},\,\widetilde{\theta})$ over the compact 
Sasakian manifold $(M,\, (T^{1,0}, \,S,\, I),\, (\eta, \,\xi))$ is stable, then there 
exists a Hermite-Einstein metric for $(E,\,\theta)$.
\end{theorem}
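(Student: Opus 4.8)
The plan is to obtain the desired orbifold Hermite--Einstein metric by pushing down a basic Hermite--Einstein metric on $\widetilde E$ furnished by Theorem \ref{tthbk0}. Recall from the construction preceding the statement that $\widetilde E$ is a \emph{regular} basic vector bundle over $M$; in particular it is quasi-regular, so by Lemma \ref{lemSinv} it admits basic Hermitian metrics and the notion of stability for $(\widetilde E,\,\widetilde\theta)$ is meaningful. Under the hypothesis that $(\widetilde E,\,\widetilde\theta)$ is stable, Theorem \ref{tthbk0} provides a basic Hermite--Einstein metric $\widetilde h$ on $\widetilde E$, unique up to a positive constant.

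First I would check that $\widetilde h$ may be taken to be $\Phi^{\widetilde E}$-invariant. One route is to note that a basic Hermitian metric, regarded as a basic section of the associated bundle of Hermitian forms on $\widetilde E$, is $\Phi^{\widetilde E}$-invariant by the characterization of basic sections as the $\Phi^{\widetilde E}$-invariant ones. A more robust alternative, which I expect to use, relies on the uniqueness clause of Theorem \ref{tthbk0}: since $\widetilde E$ is regular the action $\Phi^{\widetilde E}$ descends to an $S^{1}$-action $\Psi^{\widetilde E}$, and the Reeb flow preserves the basic holomorphic structure of $\widetilde E$, the Higgs field $\widetilde\theta$, and the transverse K\"ahler form $d\eta$. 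Consequently $(\Psi^{\widetilde E}_{g})^{\ast}\widetilde h$ is again a basic Hermite--Einstein metric for every $g\,\in\, S^{1}$, so uniqueness up to a positive scalar gives $(\Psi^{\widetilde E}_{g})^{\ast}\widetilde h\,=\,c(g)\,\widetilde h$ with $c\,:\,S^{1}\,\longrightarrow\,\R^{>0}$ a continuous homomorphism. As the image is a compact connected subgroup of $\R^{>0}$, we get $c\,\equiv\,1$, and therefore $\widetilde h$ is $\Phi^{\widetilde E}$-invariant.

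Finally I would descend $\widetilde h$ to the orbifold. By the construction of $\widetilde E$ from $E$ via $\widetilde E_{O_{x}}\,=\,S^{1}\times_{\Gamma_{x}}E_{\widetilde U_{x}}$, a $\Phi^{\widetilde E}$-invariant Hermitian metric restricts, on each chart, to a $\Gamma_{x}$-invariant Hermitian metric $h_{\widetilde U_{x}}$ on $E_{\widetilde U_{x}}$, and the family $\{h_{\widetilde U_{x}}\}_{x\in X}$ is a Hermitian metric on the orbibundle $E\,\longrightarrow\, X$. By the discussion immediately preceding the statement, since the invariant metric $\widetilde h$ is Hermite--Einstein for $(\widetilde E,\,\widetilde\theta)$, the induced family $\{h_{\widetilde U_{x}}\}$ is Hermite--Einstein for $(E,\,\theta)$, which completes the argument.

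I expect the only genuine content to lie in the localization identity underlying this last step: that the single basic Hermite--Einstein equation on $M$, written with respect to the transverse K\"ahler form $d\eta$, is equivalent chart-by-chart to the Hermite--Einstein equation on $(E_{\widetilde U_{x}},\,\theta)$ over the K\"ahler manifold $(\widetilde U_{x},\,g_{x})$. This is exactly the assertion recorded just before the theorem, so it may be taken as given; the remaining steps are then essentially formal, the one point requiring care being the passage from ``basic'' to ``$\Phi^{\widetilde E}$-invariant'' handled above.
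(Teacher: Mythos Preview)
Your proposal is correct and matches the paper's argument: the paper's proof is precisely the sentence preceding the theorem, invoking Theorem~\ref{tthbk0} to obtain a basic Hermite--Einstein metric $\widetilde h$ and then descending it via the observation that a $\Phi^{\widetilde E}$-invariant Hermite--Einstein metric on $\widetilde E$ induces one on $E$. Your first route for $\Phi^{\widetilde E}$-invariance (basic sections are exactly the $\Phi^{\widetilde E}$-invariant ones, by the lemma in Section~\ref{q-rv}) is the one the paper uses implicitly, so your ``more robust alternative'' via the uniqueness clause, while valid, is not needed.
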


Conversely, given a regular b-holomorphic vector bundle $\widetilde{E}$ over $(M,\, (T^{1,0}, \,S,\, I),\, (\eta, \,\xi))$, we can construct
the following holomorphic vector orbibundle $E\,\longrightarrow\, X$.
For each $O_{x}\,=\,S^{1}\times_{\Gamma_{x}} \widetilde{U}_{x}$ trivialize $\widetilde{E}$ as $\widetilde{E}_{O_{x}}\,=\,O_{x}\times \C^{r}$,
and write $E_{\widetilde{U}_{x}}\,=\, \widetilde{U}_{x} \times \C^{r}$. Let
$\nu_{x}\,:\,\Gamma_{x}\,\longrightarrow\, {\rm GL}(E_{\widetilde{U}_{\alpha}})$ be the homomorphism
defined by $\gamma\, \longmapsto\, \Psi^{\widetilde{E}}_{\gamma}$, where
$\Psi^{\widetilde{E}}\,:\, S^{1}\times\widetilde{E}\,\longrightarrow\, \widetilde{E}$ is the action induced by
$\Phi^{\widetilde{E}}$; here we regard $\Gamma_{x}\,\subset\, S^{1}$.
This construction and the earlier construction together produce an equivalence between the category of regular
b-Higgs bundle $(\widetilde{E},\,\widetilde{\theta})$ over $(M,\, (T^{1,0}, \,S,\, I),\, (\eta, \,\xi))$ and the category of Higgs
orbibundles $(E,\,\theta)$ over the complex orbifold $(X,\, {\mathcal U})$.

By the arguments in Section \ref{secHarm}, we have the following:

\begin{theorem}
For any compact quasi-regular Sasakian manifold $(M,\, (T^{1,0}, \,S,\, I),\, (\eta, \,\xi))$ there is an equivalence of
categories between the following two:
\begin{itemize}
\item the category of semi-simple flat bundles with a regular basic bundle, and

\item the category of Higgs orbibundles $(E,\,\theta)$ over the associated complex orbifold $(X\,=\,M/S^{1},\, {\mathcal U})$ such that the 
associated b-Higgs bundle $(\widetilde{E},\,\widetilde{\theta})$ over the compact Sasakian manifold $(M,\, (T^{1,0}, \,S,\, I),\, (\eta, 
\,\xi))$ is polystable with $$c_{1,B}(\widetilde{E})\,=\,0\qquad {\rm and} \qquad \int_{M} c_{2,B}(\widetilde{E})\wedge(d\eta)^{n-2}
\wedge\eta\,=\,0.$$
\end{itemize}
\end{theorem}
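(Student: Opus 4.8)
The plan is to obtain the asserted equivalence by composing two equivalences that are already at our disposal: the non-abelian Hodge correspondence of Section~\ref{secHarm}, which identifies the category of semi-simple flat bundles over $M$ with the category of polystable b-Higgs bundles $(E,\,\theta)$ satisfying $c_{1,B}(E)\,=\,0$ and $\int_{M} c_{2,B}(E)\wedge(d\eta)^{n-2}\wedge\eta\,=\,0$; and the equivalence established earlier in the present subsection between regular b-Higgs bundles over $M$ and Higgs orbibundles over $X\,=\,M/S^{1}$. The only point that needs an argument is that these two equivalences are compatible along the common notion of regularity of the underlying basic vector bundle, and everything else is formal.

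The key step is to verify that the non-abelian Hodge correspondence preserves the underlying basic vector bundle structure, and hence preserves (quasi-)regularity. Let $(E,\,D)$ be a semi-simple flat bundle with harmonic metric $h$; by \cite[Proposition 4.1]{BK} the metric $h$ is basic, which is equivalent to $\phi(\xi)\,=\,0$ in the decomposition $D\,=\,\nabla+\phi$. Since $\theta$ and $\overline\theta$ are basic forms we also have $\theta(\xi)\,=\,\overline\theta(\xi)\,=\,0$, and therefore $D_{\xi}\,=\,\nabla_{\xi}$. Now both $D$ and $\nabla$ preserve basic forms, so by the discussion in the subsection on connections and basic Chern classes each of them restricts on $T\mathcal F_{\xi}$ to the canonical flat partial $\mathcal F_{\xi}$-connection defining a basic vector bundle structure on $E$; the identity $D_{\xi}\,=\,\nabla_{\xi}$ shows these two structures coincide. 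Consequently the parallel transport $\Phi^{E}$ along the Reeb orbits, which in Section~\ref{q-rv} is defined precisely through this flat partial connection, is the same whether $E$ is viewed as the flat bundle $(E,\,D)$ or as the associated b-Higgs bundle $(E,\,\theta)$. Hence $\Phi^{E}_{r}\,=\,{\rm Id}$ holds for one of them if and only if it holds for the other, i.e.\ $(E,\,D)$ has regular basic bundle exactly when $(E,\,\theta)$ is a regular b-Higgs bundle.

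With this compatibility in hand, I would restrict the equivalence of Section~\ref{secHarm} to the full subcategory whose underlying basic vector bundle is regular: it becomes an equivalence between semi-simple flat bundles with regular basic bundle and polystable b-Higgs bundles $(E,\,\theta)$ satisfying the two Chern-class conditions and having regular basic bundle. Composing with the orbifold equivalence (regular b-Higgs bundles over $M$ versus Higgs orbibundles over $X$) then converts the latter into Higgs orbibundles $(E,\,\theta)$ over $X$ whose associated b-Higgs bundle $(\widetilde{E},\,\widetilde{\theta})$ is polystable with $c_{1,B}(\widetilde{E})\,=\,0$ and $\int_{M} c_{2,B}(\widetilde{E})\wedge(d\eta)^{n-2}\wedge\eta\,=\,0$, which is exactly the second category in the statement; since each constituent correspondence is an equivalence and both are compatible with morphisms, the composite is again an equivalence. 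The main obstacle is precisely the compatibility verified in the previous paragraph: one must be certain that the basic structure recorded by $\Phi^{E}$ is genuinely the same on both sides of the Hodge correspondence, and this rests entirely on the basic-ness of the harmonic metric. For the converse direction one uses the same mechanism in reverse, namely that the Hermite--Einstein metric produced from a Higgs orbibundle is $\Phi^{\widetilde{E}}$-invariant (equivalently basic, by the Lemma identifying $\Phi^{E}$-invariant sections with basic ones), so that the resulting flat bundle $(\widetilde{E},\,D^{\widetilde{h}})$ again has regular basic bundle.
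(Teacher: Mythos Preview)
Your proposal is correct and follows the same approach as the paper: compose the non-abelian Hodge correspondence of Section~\ref{secHarm} with the equivalence between regular b-Higgs bundles and Higgs orbibundles described just before the theorem. The paper's own proof is in fact a single sentence (``By the arguments in Section~\ref{secHarm}, we have the following''), so your explicit verification that the basic vector bundle structure---and hence $\Phi^{E}$ and the regularity condition---is preserved under the Hodge correspondence (via $D_{\xi}=\nabla_{\xi}$ from the basicness of the harmonic metric) simply makes precise what the paper leaves implicit.
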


By Corollary \ref{regflll} we have the following:

\begin{corollary}\label{orbirephi}
For any compact quasi-regular Sasakian manifold $(M,\, (T^{1,0}, \,S,\, I),\, (\eta, \,\xi))$ there is an equivalence of
categories between the following two:
\begin{itemize}
\item the category of semi-simple complex representations of $\pi^{orb}_{1}(X,\, S^{1}x)$ for $x\,\in\, M_{reg}$, and

\item the category of Higgs orbibundles $(E,\,\theta)$ over the associated complex orbifold $(X\,=\,M/S^{1},\, {\mathcal U})$ such that 
the associated b-Higgs bundles $(\widetilde{E},\,\widetilde{\theta})$ over the compact Sasakian manifold
$(M,\, (T^{1,0}, \,S,\, I),\, (\eta, \,\xi))$ is polystable with $$c_{1,B}(\widetilde{E})\,=\, 0\qquad {\rm and} \qquad
\int_{M} c_{2,B}(\widetilde{E})\wedge(d\eta)^{n-2}\wedge\eta\,=\,0.$$
\end{itemize}
\end{corollary}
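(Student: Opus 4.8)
The plan is to deduce this statement by composing the equivalence established in the preceding theorem with the monodromy description of flat bundles and with Corollary \ref{regflll}. First I would invoke the standard Riemann--Hilbert correspondence, which gives an equivalence between the category of flat complex vector bundles $(E,\, D)$ over $M$ and the category of finite-dimensional complex representations $\rho\,:\,\pi_{1}(M,\, x)\,\longrightarrow\, {\rm GL}(E_{x})$; under this equivalence a flat bundle is semi-simple precisely when its monodromy representation $\rho$ is semi-simple, and a morphism of flat bundles corresponds to an intertwining operator of representations.

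Next I would use Corollary \ref{regflll} to single out, within this correspondence, those flat bundles whose underlying basic vector bundle is regular. By that corollary, $E$ is a regular basic vector bundle if and only if $\rho$ factors through the surjective homomorphism $\pi_{1}(M,\, x)\,\longrightarrow\,\pi^{orb}_{1}(X,\, S^{1}x)$ appearing in the exact sequence \eqref{hes}. Thus the monodromy functor restricts to an equivalence between the category of semi-simple flat bundles with a regular basic bundle and the category of semi-simple representations of $\pi_{1}(M,\, x)$ that factor through $\pi^{orb}_{1}(X,\, S^{1}x)$.

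The third step is to observe that, since $\pi_{1}(M,\, x)\,\longrightarrow\,\pi^{orb}_{1}(X,\, S^{1}x)$ is surjective, pulling back along it identifies representations of $\pi^{orb}_{1}(X,\, S^{1}x)$ with exactly those representations of $\pi_{1}(M,\, x)$ that factor through this quotient; this pullback is fully faithful and essentially surjective, hence an equivalence, and it preserves semi-simplicity because the lattice of invariant subspaces of the underlying vector space is the same whether it is regarded as a $\pi_{1}(M,\, x)$-module or as a module over its quotient $\pi^{orb}_{1}(X,\, S^{1}x)$. Composing the functors of the previous two steps therefore produces an equivalence between the category of semi-simple complex representations of $\pi^{orb}_{1}(X,\, S^{1}x)$ and the category of semi-simple flat bundles with a regular basic bundle. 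Finally, chaining this with the equivalence of the preceding theorem between the latter category and the category of Higgs orbibundles $(E,\,\theta)$ whose associated b-Higgs bundle $(\widetilde{E},\,\widetilde{\theta})$ is polystable with $c_{1,B}(\widetilde{E})\,=\,0$ and $\int_{M} c_{2,B}(\widetilde{E})\wedge(d\eta)^{n-2}\wedge\eta\,=\,0$ yields the asserted equivalence of categories.

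I do not expect a genuine obstacle here, since every ingredient has already been prepared. The only point demanding care is the compatibility of the three semi-simplicity conditions: one must check that semi-simplicity of a flat bundle, of its $\pi_{1}(M,\, x)$-monodromy, and of the corresponding $\pi^{orb}_{1}(X,\, S^{1}x)$-representation all agree, and that this matches the polystability condition on the b-Higgs side as guaranteed by the preceding theorem. Because a representation and its factorization through a quotient group have identical invariant-subspace lattices, this compatibility is immediate, so the proof reduces to a formal composition of already-established equivalences.
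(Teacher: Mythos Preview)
Your proposal is correct and follows essentially the same approach as the paper: the paper deduces the corollary in one line from the preceding theorem by invoking Corollary \ref{regflll}, and your argument simply spells out in detail how Corollary \ref{regflll} together with the Riemann--Hilbert correspondence identifies semi-simple flat bundles with a regular basic bundle and semi-simple representations of $\pi^{orb}_{1}(X,\, S^{1}x)$.
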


\begin{example}
Let $(M,\, (T^{1,0}, \,S,\, I),\, (\eta, \,\xi))$ be a compact quasi-regular Sasakian manifold. We recall that
the b-holomorphic bundle $T^{1,0}$ corresponds to the holomorphic tangent orbibundle $T^{1,0}X$ over the complex orbifold $X\,=\,M/S^{1}$.
Construct a Higgs orbibundle $(E,\,\theta)$ over the complex orbifold $X$ as follows:
\begin{itemize}
\item $E\,=\,\C_{X}\oplus T^{1,0}X$, where $\C_{X}$ is the trivial holomorphic line orbibundle, and
\item $\theta\,=\,\left(
\begin{array}{cc}
0&0\\
1 & 0
\end{array}
\right)$.
\end{itemize}
This $(E,\,\theta)$ corresponds to the b-Higgs bundle $(\widetilde{E},\,\widetilde{\theta})$ over $(M,\, (T^{1,0}, \,S,\, I),\, (\eta, \,\xi))$
as defined in Example \ref{exTW}. Thus, if $(\widetilde{E},\,\widetilde{\theta})$ is stable, then a Hermite-Einstein metric exists for
the Higgs orbibundle $(E,\,\theta)$ over the complex orbifold $X$.
\end{example}

\begin{example}\label{PSL2g2}
We consider $M\,=\,\Gamma\backslash{\rm PSL}_{2}(\R)$ as in Example \ref{PSL2g}. Then the associated complex orbifold $(X\,=\,M/S^{1},\, {\mathcal 
U})$ is the orbifold Riemann surface $\Gamma\backslash H$, and we have $\pi^{orb}_{1}(X,\, S^{1}x)\,\cong\, \Gamma$. The regular line bundle 
$E^{\tau}_{\sqrt{-1}}\,=\,E^{\tau}\otimes E_{\sqrt{-1}}$ corresponds to a line orbibundle $L$ over the complex orbifold $X$ such that 
$L^{2}\,=\,T^{1,0}X$. Construct a Higgs orbibundle $(E,\,\theta)$ over the complex orbifold $X$ as follows:
\begin{itemize}
\item $E\,=\,L\oplus L^{\ast}$, and
\item $\theta=\left(
\begin{array}{cc}
0&0 \\
1 & 0
\end{array}
\right)$, where we regard $1\,\in\, (T^{1,0}X)^{\ast}\otimes {\rm Hom}(L^{\ast},\, L)\,=\,(T^{1,0}X)^{\ast}\otimes L^{2}\,=\,\C_{X}$.
\end{itemize}
This Higgs orbibundle $(E,\theta)$ corresponds to the b-Higgs bundle 
$(\widetilde{E},\,\widetilde{\theta})$ over the compact Sasakian manifold $(M,\, (T^{1,0}, 
\,S,\, I),\, (\eta, \,\xi))$ as in Example \ref{PSL2g}; in the case where $X$ is smooth, 
this is Hitchin's fundamental example as explained in the introduction. Hence, a 
Hermite-Einstein metric exists for $(E,\, \theta)$, and the monodromy 
representation of the corresponding flat connection coincides with
the canonical homomorphism $\pi^{orb}_{1}(X,\, 
S^{1}x)\,\cong\,\Gamma\,\hookrightarrow\,{\rm SL}_{2}(\C)$.
\end{example}

\begin{remark}
On hyperbolic orbifold Riemann surfaces, the existence of Hermite-Einstein metrics and correspondence between Higgs bundles and 
representations of orbifold fundamental groups are given in \cite{NS,ALS}. These results are derived from the correspondence in 
\cite{Hit,Si1,Si2} on compact smooth Riemann surfaces under the equivariance and rely on the fact that every compact hyperbolic orbifold 
Riemann surface can be realized as the finite group quotient of a compact smooth hyperbolic Riemann surface. Here we do not need this fact. 
\end{remark}

\begin{remark}
Since the complex orbifold $(X\,=\,M/S^{1},\, {\mathcal U})$ can be seen to be a projective 
variety (see Remark \ref{Orbifun}), we have a desingularization $Z\,\longrightarrow\, X$ by 
Hironaka's theorem \cite{Hir}. The correspondence in \cite{BM} is actually closely tied to 
the correspondence between the flat bundles and the Higgs bundles on the smooth projective variety $Z$. The 
topological fundamental group of $X$ is isomorphic to the fundamental group of $Z$ (see 
\cite{Kol}). But the orbifold fundamental group $\pi^{orb}_{1}(X,\, S^{1}x)$ may not be so. 
The correspondence in \cite{BM} does not quite take into account the orbifold fundamental 
group $\pi^{orb}_{1}(X,\, S^{1}x)$.
\end{remark}

\section{Flat bundles over quasi-regular Sasakian manifolds}

\subsection{DG-categories}

\begin{definition}
A category $\mathcal C$ is called a {\em differential graded category} (DG-category for short) if the following conditions hold:
\begin{itemize}
\item $\mathcal C$ is an additive $\C$-linear category;

\item for any objects $U,\, V\,\in\, {\rm Ob}(\mathcal C)$, the space of morphisms ${\rm Hom}(U,\,V)$ admits a cochain complex
structure $({\rm Hom}^{\ast}(U,\,V),\, d)$ such that ${\rm Hom}^{i}(U,\,V) \, =\, 0$ for all $i\,<\,0$;

\item the identity morphism $1\,\in\, {\rm Hom}(U,\,U)$ satisfies the conditions $1\,\in\, {\rm Hom}^{0}(U,\,U)$ and $d(1)\,=\,0$; and

\item for any $U,\,V,\, W\,\in\, {\rm Ob}(\mathcal C)$ and morphisms $f\,\in\, {\rm Hom}^{i}(U,\,V), \, g\,\in\, {\rm Hom}^{j}(V,\,W)$, 
the Leibniz rule
\[d(fg)\,=\,(df )g+(-1)^{i}fdg
\]
holds.
\end{itemize}
For DG-Categories ${\mathcal C}_{1},\, {\mathcal C}_{2}$,
a functor $$F\,:\,{\mathcal C}_{1}\,\longrightarrow\, {\mathcal C}_{2}$$ of DG-categories is a functor of
categories such that $F_{U,V}\,:\, {\rm Hom}(U, \,V)\,\longrightarrow\, {\rm Hom}(FU,\, FV)$ is a morphism of cochain complexes
for all $U,\,V\,\in\, {\rm Ob}({\mathcal C}_{1})$.
\end{definition}

\begin{definition}
Let $\mathcal C$ be a DG-category.
\begin{enumerate}
\item The additive category $E^{0}{\mathcal C}$ is defined as follows:
\begin{itemize}
\item ${\rm Ob}({\mathcal C})\,=\,{\rm Ob}(E^{0}{\mathcal C})$, and

\item ${\rm Hom}(U,\, V)\,=\,H^{0}({\rm Hom}^{0}(U,\, V))$ for all $U,\,V\,\in \,{\rm Ob}(E^{0}{\mathcal C})$.
\end{itemize}
For a functor $F\,:\,{\mathcal C}_{1}\,\longrightarrow\, {\mathcal C}_{2}$ of DG-categories, we denote by $E^{0}(F)$ the functor
$E^{0}({\mathcal C}_{1})\,\longrightarrow\, E^{0}({\mathcal C}_{2})$ which is induced by $F$.

\item An extension in $\mathcal C$ is a diagram 
\[\xymatrix{
M\ar[r]^{a}&U\ar[r]^{b}&N
}
\]
in $\mathcal C$ with $a\,\in\, {\rm Hom}^{0}(M,\, U)$ and $b\,\in\, {\rm Hom}^{0}(U, \,N)$ with $ba\,=\,0$, $da\,=\,0$ and $db\,=\,0$, such that 
there exists a splitting, meaning there is a diagram
\[\xymatrix{
M&\ar[l]^{g}U&\ar[l]^{h}N
}
\]
with $g\,\in\, {\rm Hom}^{0}(U,\, M)$ and $h\in {\rm Hom}^{0}(N, U)$ such that the
conditions $ga\,=\,1$, $bh\,=\,1$, $gh\,=\,0$ and $ag+hb\,=\,1$ hold.

\item
We define the new DG-category $\overline{\mathcal C}$ such that:
\begin{itemize}
\item $ {\rm Ob}(\overline{\mathcal C}) \,=\,\{(U,\, \eta)\,\in\, {\rm Ob}({\mathcal C})\times {\rm Hom}^{1}(U,\, U)\,\big\vert\,
d\eta +\eta^{2}\,=\,0\}$, and

\item ${\rm Hom}(U, \,V)\,=\,{\rm Hom}^{\ast}(U,\, V)$ for all $(U,\,\eta),\, (V,\,\zeta)\,\in\, {\rm Ob}(\overline{\mathcal C})$, 
while the differential $\overline{d}$ satisfies the condition
\[
\overline{d}(f)\,=\,df+ \zeta f-(-1)^{i}f\eta
\]
for all $f\,\in\, {\rm Hom}^{i}(U,\, V)$.
\end{itemize}

\item The DG-category $\widehat{\mathcal C}$ is the full subcategory of $ {\mathcal C}$ whose objects are successive extensions of objects ${\mathcal C}$.
We call $\widehat{\mathcal C}$ the {\em completion } of ${\mathcal C}$.
For a functor $$F\,:\,{\mathcal C}_{1}\,\longrightarrow\, {\mathcal C}_{2}$$ of DG-categories, we denote by
$\widehat{F}$ the functor $\widehat{\mathcal C}_{1}\,\longrightarrow\, \widehat{\mathcal C}_{2}$ of DG-categories which is induced by $F$.
\end{enumerate}
\end{definition}

\begin{proposition}[{\cite{Si2}}]\label{exdg}
Let $F\,:\,{\mathcal C}_{1}\,\longrightarrow\, {\mathcal C}_{2}$ be a functor of DG-categories.
Suppose that $F$ is surjective on isomorphism classes and for any $U,\,V\,\in\, {\mathcal C}_{1}$,
$$F_{U,V}\,:\,{\rm Hom}(U,\, V)\,\longrightarrow\, {\rm Hom}(FU,\, FV)$$ induces an isomorphism of cohomologies.
Then the induced functor $$E^{0}(\widehat{F})\,:\, E^{0}(\widehat{\mathcal C}_{1})\,\longrightarrow\,
E^{0}(\widehat{\mathcal C}_{2})$$ is an equivalence of categories.
\end{proposition}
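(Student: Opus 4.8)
The plan is to show that $E^0(\widehat F)$ is an equivalence by proving it is fully faithful and essentially surjective. Throughout I would realize the objects of $\widehat{\mathcal C}_i$ as twisted objects $(U,\,\eta)$ with $U$ a finite direct sum of objects of ${\mathcal C}_i$ and $\eta\,\in\, {\rm Hom}^1(U,\,U)$ strictly upper-triangular for the filtration recording the successive extension; for such $\eta$ the equation $d\eta+\eta^2\,=\,0$ reduces to $d\eta\,=\,0$ since $\eta$ is nilpotent. The crucial preliminary step is to upgrade the hypothesis that each $F_{U,V}$ is a quasi-isomorphism of the untwisted complexes to the statement that $\widehat F$ is a quasi-isomorphism of the \emph{twisted} Hom-complexes $({\rm Hom}(U,\,V),\,\overline d)$, where $\overline d (f)\,=\,df+\zeta f-(-1)^i f\eta$, between objects of $\widehat{\mathcal C}_1$. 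Because a DG-functor sends $\eta$ to $F\eta$ and is compatible with $\overline d$, this reduces to a comparison of perturbed differentials: filtering ${\rm Hom}(U,\,V)$ by the nilpotency degree of the twist exhibits $\overline d$ as a filtered perturbation of $d$ whose associated graded differential is exactly $d$, so the untwisted quasi-isomorphism propagates through the (finite, hence convergent) spectral sequence of the filtration to a quasi-isomorphism of the twisted complexes.

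Granting this, fully faithfulness is immediate: for $X,\,Y\,\in\,{\rm Ob}(\widehat{\mathcal C}_1)$ the morphism spaces in $E^0(\widehat{\mathcal C}_i)$ are the degree-zero cohomologies of the twisted Hom-complexes, and $\widehat F$ induces an isomorphism on these by the previous step.

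For essential surjectivity I would argue by induction on the length $\ell$ of the successive extension presenting an object $Y\,\in\,{\rm Ob}(\widehat{\mathcal C}_2)$. For $\ell\,=\,1$ the object lies in ${\mathcal C}_2$ and is hit up to isomorphism by the assumed surjectivity of $F$ on isomorphism classes. For the inductive step, present $Y$ as an extension $M\,\longrightarrow\, Y\,\longrightarrow\, N$ with $N\,\in\,{\rm Ob}({\mathcal C}_2)$ and $M$ a successive extension of length $\ell-1$; by induction and the base case choose $M',\,N'$ in $\widehat{\mathcal C}_1$ with $FM'\,\cong\, M$ and $FN'\,\cong\, N$ in the respective $E^0$-categories. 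A computation with a chosen splitting $g,\,h$ shows that such an extension is classified by the degree-one cocycle $\kappa\,=\,g\,\overline d h\,\in\,{\rm Hom}^1(N,\,M)$, whose class $[\kappa]\,\in\, H^1({\rm Hom}(N,\,M))$ is independent of the splitting; the identities $ba\,=\,0$, $da\,=\,0$, $db\,=\,0$ force $\overline d\kappa\,=\,0$. Transporting $[\kappa]$ through the isomorphisms $FM'\cong M$ and $FN'\cong N$ and inverting the degree-one cohomology isomorphism of the previous paragraph yields a class $[\kappa']\,\in\, H^1({\rm Hom}(N',\,M'))$. The twisted object $(M'\oplus N',\,\eta_{\kappa'})$, with $\eta_{\kappa'}$ the strictly upper-triangular element determined by a representative $\kappa'$, is then an object of $\widehat{\mathcal C}_1$, and since $F$ carries $\kappa'$ to a representative of $[\kappa]$ its image is isomorphic to $Y$ in $E^0(\widehat{\mathcal C}_2)$. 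Combining full faithfulness with essential surjectivity gives that $E^0(\widehat F)$ is an equivalence.

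The main obstacle is the propagation of the quasi-isomorphism from the untwisted to the twisted Hom-complexes, since it is there that the nilpotent Maurer--Cartan data must be controlled; once the twisted quasi-isomorphism and the identification of extensions with classes in $H^1({\rm Hom}(N,\,M))$ are in place, the induction and the concluding equivalence are formal. A secondary point requiring care is verifying that replacing the representative $\kappa'$ within its cohomology class yields twisted objects that are isomorphic in $E^0(\widehat{\mathcal C}_1)$, which ensures the lifted object is well defined up to isomorphism.
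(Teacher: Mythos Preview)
The paper does not supply its own proof of this proposition; it is quoted directly from Simpson \cite{Si2} and used as a black box. So there is no in-paper argument to compare against. Your sketch is essentially Simpson's original argument (twisted complexes, filtration/spectral-sequence comparison of the perturbed differentials, induction on extension length for essential surjectivity), and the overall strategy is sound.

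One slip to correct: it is \emph{not} true that for a strictly upper-triangular $\eta$ the Maurer--Cartan equation $d\eta+\eta^2=0$ reduces to $d\eta=0$. Nilpotency of $\eta$ only gives $\eta^{\ell}=0$ for the filtration length $\ell$; already for $\ell\ge 3$ one has $\eta^2\neq 0$ in general, and the cross-terms of $\eta^2$ are exactly what encode the compatibility of the successive extension data. Fortunately your argument never actually uses $d\eta=0$: the spectral-sequence step only needs that the twist is filtration-increasing (hence nilpotent), so that the associated graded of $\overline d$ is $d$ and the filtration is finite. Likewise, in the inductive step the off-diagonal block $\kappa$ satisfies $\overline d\kappa=0$ for the \emph{twisted} differential on ${\rm Hom}(N,M)$ (with the twist coming from $\eta_M$), not $d\kappa=0$; your later paragraphs implicitly use this correct version. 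Once that misstatement is removed, the proposal matches the standard proof.
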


\begin{example}[{Flat bundles}]\label{FlDG}
Let $(M,\, (T^{1,0}, \,S,\, I),\, (\eta, \,\xi))$ be a compact Sasakian manifold. 
\begin{enumerate}
\item ${\mathcal C}_{dR}$ is the category of all flat bundles on $M$ with
$${\rm Hom}^{\ast}(U,\, V)\,=\,(A^{\ast}(M,\, {\rm Hom}(U,\, V)),\, D).$$

\item ${\mathcal C}_{dR, B}$ is the category of all flat bundles on $M$ with $${\rm Hom}^{\ast}(U,\, V)\,=\,(A^{\ast}_{B}(M,\,{\rm Hom}(U,
\, V)),\, D)\, .$$ We note that $E^{0}({\mathcal C}_{dR})\,=\,E^{0}({\mathcal C}_{dR, B})$.

\item ${\mathcal C}^{s}_{dR}$ (respectively, ${\mathcal C}^{s}_{dR,B}$) is the full sub-category of ${\mathcal C}_{dR}$
(respectively, ${\mathcal C}_{dR,B}$) 
consisting of semi-simple flat bundles. We note that ${\mathcal C}_{dR}$ is naturally equivalent to $\widehat{\mathcal C}^{s}_{dR}$; see 
\cite[Lemma 3.4]{Si2}. But, we cannot say that ${\mathcal C}_{dR,B}$ is naturally equivalent to $\widehat{\mathcal C}^{s}_{dR, B}$ because
an extension in ${\mathcal C}_{dR,B}$ need not split.

\item ${\mathcal C}_{dR, Bh}$ is the full sub-category of ${\mathcal C}_{dR, B}$ consisting of flat bundles admitting a basic Hermitian metric.
Then ${\mathcal C}^{s}_{dR,B}$ is a full sub-category of ${\mathcal C}_{dR, Bh}$; see \cite[Section 4]{BK}.
By taking the orthogonal complement, every extension in ${\mathcal C}_{dR,Bh}$ can be made to split, and hence ${\mathcal C}_{dR,Bh}$
is naturally equivalent to $\widehat{\mathcal C}^{s}_{dR, B}$.

\item Suppose $(M,\, (T^{1,0}, \,S,\, I),\, (\eta, \,\xi))$ is quasi-regular. In this 
case, ${\mathcal C}^{qR}_{dR, B}$ is the full sub-category of ${\mathcal C}_{dR, B}$ 
consisting of flat bundles which are quasi-regular basic bundles; note that from
Lemma \ref{lemSinv} 
it follows that ${\mathcal C}^{qR}_{dR, B}$ is a full sub-category of ${\mathcal C}_{dR, 
Bh}$. Let ${\mathcal C}^{sqR}_{dR, B}$ be the full sub-category of ${\mathcal C}^{qR}_{dR, 
B}$ consisting of semi-simple flat bundles. By the same argument as above, ${\mathcal 
C}^{qR}_{dR, B}$ is naturally isomorphic to $\widehat{\mathcal C}^{sqR}_{dR, B}$.
\end{enumerate}
\end{example}

\begin{example}[{b-Higgs bundles}]\label{HiDG}
Let $(M,\, (T^{1,0}, \,S,\, I),\, (\eta, \,\xi))$ be a compact Sasakian manifold. 
\begin{enumerate}
\item ${\mathcal C}_{Dol,B}$ is the category of b-Higgs bundles $(E, \,\theta)$ admitting filtrations
$$
0\, \subset\, E_1\, \subset\, \cdots \, \subset\, E_{\ell-1}\, \subset\, E_\ell \,=\, E
$$
of sub-b-Higgs bundles such that the b-Higgs bundle $(E_i/E_{i-1},\, \theta)$ is stable and 
$$c_{1,B}(E_i/E_{i-1})\,=\,0\qquad{\rm and} \qquad \int_{M} 
c_{2,B}(E_i/E_{i-1})\wedge(d\eta)^{n-2}\wedge\eta\,=\,0$$ for every $1\, \leq\, i\, \leq\, 
\ell$ (the Higgs field on $E_i/E_{i-1}$ denoted by $\theta$ is again denoted by $\theta$); 
define $ {\rm Hom}^{\ast}(U,\, V)\,=\,(A^{\ast}_{B}(M,\,{\rm Hom}(U,\, V)), 
\,D^{\prime\prime})$.

\item ${\mathcal C}^{s}_{Dol,B}$ is the full sub-category of ${\mathcal C}_{Dol,B}$ consisting of polystable b-Higgs bundles.

\item ${\mathcal C}_{Dol, Bh}$ is the full sub-category of ${\mathcal C}_{Dol, B}$ consisting of objects admitting a basic Hermitian metric. 
This ${\mathcal C}^{s}_{Dol,B}$ is a full sub-category of ${\mathcal C}_{Dol, Bh}$. By taking the orthogonal complement, every extension in 
${\mathcal C}_{Dol,Bh}$ can be made to split, and hence ${\mathcal C}_{Dol,Bh}$ is naturally equivalent to $\widehat{\mathcal C}^{s}_{Dol, B}$.

\item Suppose $(M,\, (T^{1,0}, \,S,\, I),\, (\eta, \,\xi))$ is quasi-regular. In this case, 
${\mathcal C}^{qR}_{Dol, B}$ is the full sub-category of ${\mathcal C}_{Dol, B}$ consisting 
of quasi-regular b-Higgs bundles. Let ${\mathcal C}^{sqR}_{Dol, B}$ be the full 
sub-category of ${\mathcal C}^{qR}_{Dol, B}$ consisting polystable b-Higgs bundles. By the 
same argument as above, ${\mathcal C}^{qR}_{Dol, B}$ is naturally isomorphic to 
$\widehat{\mathcal C}^{sqR}_{Dol, B}$.
\end{enumerate}
\end{example}

\subsection{Extensions of harmonic bundles}

A harmonic bundle over a compact Sasakian manifold
$$(M,\, (T^{1,0}, \,S,\, I),\, (\eta, \,\xi))$$ is a flat bundle $(E,\,D)$ equipped with a 
harmonic metric $h$. Then by Corlette's theorem, $(E,\,D)$ is semi-simple \cite{Cor}. By Section \ref{secHarm}, we have a b-Higgs bundle $(E,\,
\theta)$, and we have $D\,=\,D^{\prime}+D^{\prime\prime}$ on $A^{\ast}_{B}(M,E)$, where $D^{\prime\prime}$ is the canonical operator
associated with the 
b-Higgs bundle $(E,\, \theta)$. We define the cochain complex $A_{D^{\prime}}(M,E)\,=\,\ker D^{\prime}_{\vert A^{\ast}_{B}(M,E)} $ with the 
differential $D^{\prime\prime}$. This is a sub-complex of both $(A^{\ast}_{B}(M,\,E),\,D)$ and $(A^{\ast}_{B}(M,\,E),\,D^{\prime\prime})$.

We define the DG-category ${\mathcal C}^{harm}_{D^{\prime}}$ whose objects are all harmonic
bundles and $${\rm Hom}^{\ast}(U,\, 
V)\,=\,(A^{\ast}_{D^{\prime}}(M,\, {\rm Hom}(U,\, V)),\, D^{\prime\prime}).$$
Then we have the functors $F^{harm}_{dR,B}\,:\, {\mathcal 
C}^{harm}_{D^{\prime},B}\,\longrightarrow\, {\mathcal C}^{s}_{dR,B}$ and $F^{harm}_{Dol,B}\,:\, {\mathcal C}^{harm}_{D^{\prime},B}\,
\longrightarrow\, {\mathcal C}^{s}_{Dol,B}$. 
These functors are surjective on isomorphism classes (see \cite[Section 7]{BK}). By the same statements as in \cite[Lemma 2.2]{Si2} (the so-called 
formality \cite{DGMS}) on basic differential forms with values in harmonic bundles (see \cite[Section 4]{Ka}), we conclude that for any $U,\,V\,\in 
\, {\mathcal C}^{harm}_{D^{\prime},B}$, the homomorphism $F_{U,V}\,:\,{\rm Hom}^{\ast}(U,\, V)\,\longrightarrow\, {\rm Hom}^{\ast}(FU,\,
FV)$ induces an isomorphism of cohomology for $F\,=\,F^{harm}_{dR,B},\, F^{harm}_{Dol,B}$ (see the proof of \cite[Lemma 3.4]{Si2}).
By Proposition \ref{exdg} and the arguments in 
Examples \ref{FlDG}, \ref{HiDG}, we have an equivalence $E^{0}({\mathcal C}_{dR, Bh})\,\cong\, E^{0}({\mathcal C}_{Dol,Bh})$.
That is to say, we have the following result.

\begin{theorem}
For a compact Sasakian manifold $(M,\, (T^{1,0}, \,S,\, I),\, (\eta, \,\xi))$, there exists an equivalence of categories
between the following two:
\begin{itemize}
\item the category of flat bundles on $M$ admitting a basic Hermitian metric, and

\item the category of b-Higgs bundles $(E, \,\theta)$ so that $E$ admitting a basic 
Hermitian metric and also a filtration
$$
0\, \subset\, E_1\, \subset\, \cdots \, \subset\, E_{\ell-1}\, \subset\, E_\ell \,=\, E
$$
of sub-Higgs bundles such that
the Higgs bundle $(E_i/E_{i-1},\,\theta)$ is stable and
$$c_{1,B}(E_i/E_{i-1})\,=\,0\qquad{\rm and} \qquad \int_{M} c_{2,B}(E_i/E_{i-1})\wedge(d\eta)^{n-2}\wedge\eta\,=\,0$$
for every $1\, \leq\, i\, \leq\, \ell$.
\end{itemize}
\end{theorem}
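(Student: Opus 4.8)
The plan is to obtain the asserted equivalence as a composition of two equivalences that both factor through the DG-category ${\mathcal C}^{harm}_{D^{\prime},B}$ of harmonic bundles, via the formal machinery of Proposition \ref{exdg}. The key observation is that the two categories named in the statement are precisely $E^{0}({\mathcal C}_{dR,Bh})$ and $E^{0}({\mathcal C}_{Dol,Bh})$ in the notation of Examples \ref{FlDG} and \ref{HiDG}. Indeed, unwinding the definition of $E^{0}$, a degree-zero $D$-closed morphism in ${\mathcal C}_{dR,Bh}$ is a basic $D$-parallel section of $\mathrm{Hom}$, hence exactly a morphism of flat bundles, while a degree-zero $D^{\prime\prime}$-closed morphism in ${\mathcal C}_{Dol,Bh}$ is a basic section killed by $\overline\partial_{E}$ and commuting with $\theta$, hence exactly a morphism of b-Higgs bundles. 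So once the two $E^{0}$-categories are matched up, the theorem follows by translating back.

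First I would record the two DG-functors $F^{harm}_{dR,B}\colon {\mathcal C}^{harm}_{D^{\prime},B}\to {\mathcal C}^{s}_{dR,B}$ and $F^{harm}_{Dol,B}\colon {\mathcal C}^{harm}_{D^{\prime},B}\to {\mathcal C}^{s}_{Dol,B}$ that send a harmonic bundle $(E,\,D,\,h)$ to its underlying semi-simple flat bundle, respectively to the associated polystable b-Higgs bundle $(E,\,\theta)$ produced by Theorem \ref{t4.2} and Section \ref{secHarm}. To apply Proposition \ref{exdg} to each, I must check its two hypotheses. The surjectivity on isomorphism classes is supplied by \cite{BK}: Corlette's theorem \cite{Cor} gives a harmonic metric on every semi-simple flat bundle, and the Hermite--Einstein theory (Theorem \ref{tthbk0} together with the inequality \eqref{he} and the discussion in Section \ref{secHarm}) shows that every polystable b-Higgs bundle with $c_{1,B}=0$ and $\int_{M}c_{2,B}\wedge(d\eta)^{n-2}\wedge\eta=0$ arises from a harmonic bundle whose flat connection is trivialized in curvature.

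The substantive hypothesis is that $F_{U,V}$ is a quasi-isomorphism on Hom-complexes: for harmonic bundles $U,\,V$, the sub-complex $\big(A^{\ast}_{D^{\prime}}(M,\,{\rm Hom}(U,\,V)),\, D^{\prime\prime}\big)$ should include quasi-isomorphically into both $\big(A^{\ast}_{B}(M,\,{\rm Hom}(U,\,V)),\, D\big)$ and $\big(A^{\ast}_{B}(M,\,{\rm Hom}(U,\,V)),\, D^{\prime\prime}\big)$ (recall $D$ and $D^{\prime\prime}$ agree on the sub-complex). This is the formality principle of \cite{DGMS} in the guise used in \cite[Lemma 2.2]{Si2}; for basic forms with values in a harmonic bundle over a compact Sasakian manifold it rests on the transverse $\partial_{B}\overline\partial_{B}$-type lemma and the K\"ahler identities of Section \ref{subshod}, and is established in \cite[Section 4]{Ka}. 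This is the step I expect to be the main obstacle, since it is the only genuinely analytic input; the rest of the argument is formal category theory.

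Granting these facts, Proposition \ref{exdg} gives that $E^{0}(\widehat{F^{harm}_{dR,B}})$ and $E^{0}(\widehat{F^{harm}_{Dol,B}})$ are equivalences of categories. It then remains to identify the completions. By Example \ref{FlDG}, splitting extensions through the orthogonal complement of a basic Hermitian metric yields a natural equivalence ${\mathcal C}_{dR,Bh}\simeq\widehat{{\mathcal C}^{s}_{dR,B}}$, and by Example \ref{HiDG} the same argument gives ${\mathcal C}_{Dol,Bh}\simeq\widehat{{\mathcal C}^{s}_{Dol,B}}$. Composing the two equivalences through the common middle term $E^{0}(\widehat{{\mathcal C}^{harm}_{D^{\prime},B}})$ then produces $E^{0}({\mathcal C}_{dR,Bh})\cong E^{0}({\mathcal C}_{Dol,Bh})$, which, by the translation recorded in the first paragraph, is exactly the claimed equivalence between the category of flat bundles on $M$ admitting a basic Hermitian metric and the category of b-Higgs bundles admitting a basic Hermitian metric together with a filtration whose successive quotients are stable with vanishing basic first and second Chern numbers.
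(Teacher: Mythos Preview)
Your proposal is correct and follows essentially the same approach as the paper: factor through the harmonic DG-category ${\mathcal C}^{harm}_{D^{\prime},B}$, verify the hypotheses of Proposition~\ref{exdg} via surjectivity on isomorphism classes (Corlette's theorem and \cite{BK}) and the formality quasi-isomorphism on Hom-complexes (\cite{Ka}, after \cite{DGMS} and \cite[Lemma~2.2]{Si2}), and then identify the completions $\widehat{{\mathcal C}^{s}_{dR,B}}$ and $\widehat{{\mathcal C}^{s}_{Dol,B}}$ with ${\mathcal C}_{dR,Bh}$ and ${\mathcal C}_{Dol,Bh}$ using the orthogonal-complement splittings of Examples~\ref{FlDG} and~\ref{HiDG}. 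Your write-up is in fact somewhat more explicit than the paper's in translating $E^{0}$ back to the concrete categories, but the argument is the same.
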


Suppose $(M,\, (T^{1,0}, \,S,\, I),\, (\eta, \,\xi))$ is quasi-regular. Then, by the 
previous section the equivalence $E^{0}({\mathcal C}_{dR, Bh})\,\cong\, E^{0}({\mathcal 
C}_{Dol,Bh})$ can be restricted to an equivalence $ E^{0}({\mathcal 
C}^{sqR}_{dR,B})\,\cong\, E^{0}( {\mathcal C}^{sqR}_{Dol,B})$. Hence we have the following:

\begin{theorem}\label{EQQUA}
For a compact Sasakian manifold $(M,\, (T^{1,0}, \,S,\, I),\, (\eta, \,\xi))$, there exists an equivalence of categories
between the following two:
\begin{itemize}

\item the category of flat bundles on $M$ which are quasi-regular basic bundles, and
 
\item the category of quasi-regular b-Higgs bundles $(E, \,\theta)$ which admit a filtration 
$$
0\, \subset\, E_1\, \subset\, \cdots \, \subset\, E_{\ell-1}\, \subset\, E_\ell \,=\, E
$$
of sub-b-Higgs bundles such that
the b-Higgs bundle $(E_i/E_{i-1},\,\theta)$ is stable and
$$c_{1,B}(E_i/E_{i-1})\,=\,0\qquad{\rm and} \qquad \int_{M} c_{2,B}(E_i/E_{i-1})\wedge(d\eta)^{n-2}\wedge \eta\,=\,0$$
for every $1\, \leq\, i\, \leq\, \ell$.
\end{itemize}
\end{theorem}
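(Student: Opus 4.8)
The plan is to restrict the harmonic-bundle argument of the previous subsection to the quasi-regular setting. Recall that the general equivalence $E^{0}(\mathcal{C}_{dR,Bh})\cong E^{0}(\mathcal{C}_{Dol,Bh})$ was produced by applying Proposition \ref{exdg} to the two functors $F^{harm}_{dR,B}$ and $F^{harm}_{Dol,B}$ emanating from the DG-category $\mathcal{C}^{harm}_{D^{\prime},B}$ of harmonic bundles. I would run the identical argument after replacing $\mathcal{C}^{harm}_{D^{\prime},B}$ by its full subcategory $\mathcal{C}^{harm,qR}_{D^{\prime},B}$ consisting of those harmonic bundles whose underlying basic vector bundle is quasi-regular. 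Once one knows that $F^{harm}_{dR,B}$ and $F^{harm}_{Dol,B}$ carry $\mathcal{C}^{harm,qR}_{D^{\prime},B}$ into $\mathcal{C}^{sqR}_{dR,B}$ and $\mathcal{C}^{sqR}_{Dol,B}$ respectively, and remain surjective on isomorphism classes, Proposition \ref{exdg} yields equivalences $E^{0}(\widehat{\mathcal{C}}^{sqR}_{dR,B})\cong E^{0}(\widehat{\mathcal{C}}^{harm,qR}_{D^{\prime},B})\cong E^{0}(\widehat{\mathcal{C}}^{sqR}_{Dol,B})$ (the cohomology-isomorphism hypothesis of Proposition \ref{exdg} is inherited from the general case, since the relevant $\mathrm{Hom}$-complexes are unchanged in a full subcategory). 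Combined with the identifications $\widehat{\mathcal{C}}^{sqR}_{dR,B}\cong \mathcal{C}^{qR}_{dR,B}$ and $\widehat{\mathcal{C}}^{sqR}_{Dol,B}\cong \mathcal{C}^{qR}_{Dol,B}$ of Examples \ref{FlDG} and \ref{HiDG}, this is exactly the asserted equivalence between the two categories in the statement.

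The essential point, and the step I expect to demand the most care, is that the non-abelian Hodge correspondence preserves quasi-regularity. For a harmonic bundle $(E,\,D,\,h)$ the flat bundle and the associated b-Higgs bundle $(E,\,\theta)$ live on one and the same underlying $C^{\infty}$ bundle, and I would show that they induce the same basic vector bundle, that is, the same action $\Phi^{E}$. By Theorem \ref{t4.2} the harmonic metric $h$ is basic, so in the decomposition $D=\nabla+\phi$ one has $\phi(\xi)=0$; hence the flat partial $\mathcal{F}_{\xi}$-connection determined by $D$ coincides along the Reeb direction with $\nabla_{\xi}$. On the other hand, the b-holomorphic structure $\overline{\partial}_{E}=\nabla^{\prime\prime}$ has the unitary connection $\nabla$ as its compatible Chern connection, and its underlying flat partial $\overline{\mathcal{G}}_{\xi}$-connection is the restriction of $\nabla$ to $\overline{\mathcal{G}}_{\xi}$, whose component along $\C\xi=T\mathcal{F}_{\xi}\otimes\C$ is again $\nabla_{\xi}$. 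Since $\Phi^{E}$ is by definition the parallel transport along the Reeb orbits for this partial connection, it is the same object in the de Rham and the Dolbeault pictures. Consequently $(E,\,D)$ is a quasi-regular basic bundle if and only if $(E,\,\theta)$ is, so the two functors restrict as required and stay surjective on isomorphism classes.

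It then remains to justify that the completions do not leave the quasi-regular categories, as claimed in Examples \ref{FlDG} and \ref{HiDG}. Two facts suffice. First, a basic sub-bundle of a quasi-regular bundle is quasi-regular (noted after Definition \ref{DQRR}), so the stable, respectively polystable, sub-quotients occurring in a filtration of a quasi-regular object are themselves quasi-regular; this shows that $\mathcal{C}^{qR}_{dR,B}$ and $\mathcal{C}^{qR}_{Dol,B}$ are contained in the respective completions. Second, by Lemma \ref{lemSinv} every quasi-regular bundle carries a $\Phi^{E}$-invariant, hence basic, Hermitian metric; taking orthogonal complements with respect to such a metric splits any extension in these categories as a direct sum of basic sub-bundles, and because a $\Phi^{E}$-invariant metric makes each $\Phi^{E}_{t}$ an isometry, this splitting is $\Phi^{E}$-equivariant. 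Therefore $\Phi^{E}$ decomposes as the direct sum of the corresponding actions, and the total space of an extension of quasi-regular bundles is again quasi-regular. This establishes $\mathcal{C}^{qR}_{dR,B}\cong\widehat{\mathcal{C}}^{sqR}_{dR,B}$ and $\mathcal{C}^{qR}_{Dol,B}\cong\widehat{\mathcal{C}}^{sqR}_{Dol,B}$, closing the chain of equivalences and proving the theorem.
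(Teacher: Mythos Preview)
Your proposal is correct and follows essentially the same route as the paper, which proves the theorem in one sentence by noting that the equivalence $E^{0}(\mathcal{C}_{dR,Bh})\cong E^{0}(\mathcal{C}_{Dol,Bh})$ restricts to $E^{0}(\mathcal{C}^{sqR}_{dR,B})\cong E^{0}(\mathcal{C}^{sqR}_{Dol,B})$ and then invokes the completion identifications of Examples~\ref{FlDG}(5) and~\ref{HiDG}(4). You have simply unpacked the two points the paper leaves implicit: that the harmonic correspondence preserves quasi-regularity because the partial $\mathcal{F}_{\xi}$-connection (and hence $\Phi^{E}$) is the same on both sides, and that extensions of quasi-regular bundles remain quasi-regular via the $\Phi^{E}$-invariant metric of Lemma~\ref{lemSinv}.
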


By Corollary \ref{qqfla}, we have:

\begin{corollary}
For a compact quasi-regular Sasakian manifold $(M,\, (T^{1,0}, \,S,\, I),\, (\eta, \,\xi))$, if the homomorphism $\psi(x)_{\ast}\,:\,
\pi_{1}(S^1,\,1)\,\longrightarrow\, \pi_{1}(M,\, x)$ has the finite image, then 
there exists an equivalence of categories between the following two:
\begin{itemize}
\item the category of flat bundles on $M$, and

\item the category of quasi-regular b-Higgs bundles $(E, \,\theta)$ which admit a filtration of sub-b-Higgs bundles
$$
0\, \subset\, E_1\, \subset\, \cdots \, \subset\, E_{\ell-1}\, \subset\, E_\ell \,=\, E
$$
such that the b-Higgs bundle $(E_i/E_{i-1},\,\theta)$ is stable and
$$c_{1,B}(E_i/E_{i-1})\,=\,0\qquad{\rm and} \qquad \int_{M} c_{2,B}(E_i/E_{i-1})\wedge(d\eta)^{n-2}\wedge\eta\,=\,0$$
for every $1\, \leq\, i\, \leq\, \ell$.
\end{itemize}
\end{corollary}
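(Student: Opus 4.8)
The plan is to deduce this directly from Theorem \ref{EQQUA} by showing that, under the finiteness hypothesis on the image of $\psi(x)_{\ast}$, the first category appearing in Theorem \ref{EQQUA} --- namely flat bundles on $M$ that are quasi-regular basic bundles --- in fact consists of \emph{all} flat bundles on $M$, so that the two source categories coincide. Once this identification is established, the desired equivalence is simply the one already furnished by Theorem \ref{EQQUA}.

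First I would invoke Corollary \ref{qqfla}: since $\psi(x)_{\ast}\,:\,\pi_{1}(S^1,\,1)\,\longrightarrow\, \pi_{1}(M,\, x)$ has finite image, every flat bundle $(E,\,D)$ over $M$, regarded as a basic vector bundle, is quasi-regular. In the language of the DG-categories of Example \ref{FlDG}, this says precisely that the full sub-category ${\mathcal C}^{qR}_{dR, B}$ of quasi-regular flat bundles equals the whole category ${\mathcal C}_{dR, B}$ of flat bundles; passing to the associated additive categories, one gets $E^{0}({\mathcal C}^{qR}_{dR, B})\,=\,E^{0}({\mathcal C}_{dR, B})\,=\,E^{0}({\mathcal C}_{dR})$, the last equality being the one recorded in Example \ref{FlDG}(2). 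Thus the category of flat bundles on $M$ which are quasi-regular basic bundles is nothing but the category of all flat bundles on $M$.

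Next I would feed this identification into Theorem \ref{EQQUA}. That theorem supplies an equivalence between the category of flat bundles on $M$ which are quasi-regular basic bundles and the category of quasi-regular b-Higgs bundles admitting a filtration by sub-b-Higgs bundles whose successive quotients $(E_i/E_{i-1},\,\theta)$ are stable and satisfy $c_{1,B}(E_i/E_{i-1})\,=\,0$ together with the vanishing of $\int_{M} c_{2,B}(E_i/E_{i-1})\wedge(d\eta)^{n-2}\wedge\eta$. Replacing the source category by the category of all flat bundles, via the identification of the previous paragraph, yields exactly the equivalence asserted in the corollary.

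There is essentially no analytic obstacle here: the substantive content is entirely packaged in Corollary \ref{qqfla} (which itself rests on Lemma \ref{lemSinv} for the existence of a basic Hermitian metric on a quasi-regular basic bundle) and in Theorem \ref{EQQUA}. The single point that must be checked with care is that ${\mathcal C}^{qR}_{dR, B}$ and ${\mathcal C}_{dR, B}$ agree not merely on objects but as categories with the same morphism complexes; this is immediate because ${\mathcal C}^{qR}_{dR, B}$ is defined as a \emph{full} sub-category of ${\mathcal C}_{dR, B}$, so both carry the morphism complexes $(A^{\ast}_{B}(M,\,{\rm Hom}(U,\,V)),\,D)$, and quasi-regularity is a condition imposed on objects alone. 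Hence the equivalence of Theorem \ref{EQQUA} transports verbatim, completing the argument.
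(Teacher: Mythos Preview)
Your proof is correct and follows exactly the paper's approach: the paper deduces this corollary from Theorem \ref{EQQUA} simply by invoking Corollary \ref{qqfla}, which is precisely what you do (with more detail). One small inaccuracy in your parenthetical remark: Corollary \ref{qqfla} does not rest on Lemma \ref{lemSinv} but rather on the monodromy lemma immediately preceding it; Lemma \ref{lemSinv} enters instead in the background of Theorem \ref{EQQUA} via Example \ref{FlDG}(5).
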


By Section \ref{Orbsec}, we have the following result on orbifolds.

\begin{theorem}\label{EQQUAOR}
For a compact quasi-regular Sasakian manifold $(M,\, (T^{1,0}, \,S,\, I),\, (\eta, \,\xi))$, consider the associated orbifold
$(X\,=\,M/S^{1},\, \mathcal{U})$. Then there exists an equivalence of categories between the following two:
\begin{itemize}
\item the category of representations of the orbifold fundamental group $\pi_{1}^{orb}(X)$, and

\item the category of Higgs orbibundles $(E, \,\theta)$ over the complex orbifold $(X\,=\,M/S^{1},\, \mathcal{U})$ such
that the associated b-Higgs bundle $(\widetilde{E}, \,\widetilde{\theta})$ over $(M,\, (T^{1,0}, \,S,\, I),\, (\eta, \,\xi))$
admits a filtration of sub-b-Higgs bundles
$$
0\, \subset\, E_1\, \subset\, \cdots \, \subset\, E_{\ell-1}\, \subset\, E_\ell \,=\, \widetilde{E}
$$
satisfying the conditions that the b-Higgs bundle $(E_i/E_{i-1},\,\theta)$ is stable and 
$$c_{1,B}(E_i/E_{i-1})\,=\,0\qquad{\rm and} \qquad \int_{M} 
c_{2,B}(E_i/E_{i-1})\wedge(d\eta)^{n-2}\wedge\eta\,=\,0$$ for all $1\, \leq\, i\, \leq\, 
\ell$.
\end{itemize}
\end{theorem}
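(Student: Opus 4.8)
The plan is to realize the asserted equivalence as a composite of three separate equivalences, all of which are essentially in place from the preceding sections. Writing $\pi\,=\,\pi_{1}^{orb}(X,\,S^{1}x)$ for a fixed $x\,\in\, M_{reg}$, these are: (A) an equivalence between the category of representations of $\pi$ and the category of flat bundles on $M$ that are \emph{regular} basic bundles; (B) the regular analogue of Theorem \ref{EQQUA}, namely an equivalence between regular flat bundles on $M$ and regular b-Higgs bundles admitting a filtration as in the statement; and (C) the equivalence, established in Section \ref{Orbsec}, between regular b-Higgs bundles $(\widetilde E,\,\widetilde\theta)$ over $M$ and Higgs orbibundles $(E,\,\theta)$ over $X$. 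Composing (A), (B) and (C) matches the two categories in the statement, since under (C) the b-Higgs bundle associated with a Higgs orbibundle is exactly the regular bundle $(\widetilde E,\,\widetilde\theta)$ appearing in the filtration hypothesis.

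For (A), I would use the Riemann--Hilbert correspondence together with Corollary \ref{regflll}. The exact sequence \eqref{hes} exhibits $\pi$ as a quotient of $\pi_{1}(M,\,x)$, so representations of $\pi$ are precisely those representations of $\pi_{1}(M,\,x)$ that factor through $\pi$; by Corollary \ref{regflll} these correspond, under monodromy, to the flat bundles on $M$ that are regular basic bundles. This is compatible on morphisms: a morphism of $\pi$-representations is an intertwiner of the underlying $\pi_{1}(M,\,x)$-representations, hence a morphism of flat bundles, i.e.\ a $D$-flat basic section of $\mathrm{Hom}(U,\,V)$, which is exactly a morphism in $E^{0}(\mathcal C_{dR,B})$. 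For (C) I would invoke verbatim the construction of Section \ref{Orbsec}, where the assignment $(E,\,\theta)\mapsto(\widetilde E,\,\widetilde\theta)$ and its inverse were shown to give an equivalence between Higgs orbibundles over $X$ and regular b-Higgs bundles over $M$.

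The substance of the argument is (B), for which I would mirror the DG-category proof of Theorem \ref{EQQUA}. Introduce the full subcategories $\mathcal C^{R}_{dR,B}\subset\mathcal C^{qR}_{dR,B}$ and $\mathcal C^{R}_{Dol,B}\subset\mathcal C^{qR}_{Dol,B}$ of regular flat bundles and regular b-Higgs bundles, together with their semi-simple, respectively polystable, versions $\mathcal C^{sR}_{dR,B}$ and $\mathcal C^{sR}_{Dol,B}$. Since a regular bundle is in particular quasi-regular, Lemma \ref{lemSinv} supplies a basic Hermitian metric, so these categories lie inside $\mathcal C_{dR,Bh}$ and $\mathcal C_{Dol,Bh}$; the orthogonal-complement splitting used in Examples \ref{FlDG} and \ref{HiDG} then gives $\mathcal C^{R}_{dR,B}\,\cong\,\widehat{\mathcal C^{sR}_{dR,B}}$ and $\mathcal C^{R}_{Dol,B}\,\cong\,\widehat{\mathcal C^{sR}_{Dol,B}}$, using that a basic sub-bundle of a regular bundle, and hence also the orthogonal quotient, is again regular. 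It then remains to check that the harmonic functors $F^{harm}_{dR,B}$ and $F^{harm}_{Dol,B}$ restrict to functors out of the regular harmonic bundles, are surjective on isomorphism classes onto $\mathcal C^{sR}_{dR,B}$ and $\mathcal C^{sR}_{Dol,B}$, and induce isomorphisms on the cohomology of the $\mathrm{Hom}$-complexes exactly as in Section \ref{secHarm}; Proposition \ref{exdg} then yields $E^{0}(\mathcal C^{R}_{dR,B})\,\cong\,E^{0}(\mathcal C^{R}_{Dol,B})$, which is (B).

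The main obstacle is to verify that regularity is preserved by the non-abelian Hodge correspondence, in both directions and under extensions; everything else is a routine restriction of arguments already given. Regularity of a basic bundle $E$ is determined by the Reeb-direction holonomy $\Phi^{E}$, that is, by the parallel transport of its flat partial $\mathcal F_{\xi}$-connection along $\xi$. For a harmonic bundle $(E,\,D,\,h)$ this partial connection is $D_{\xi}$, and since $h$ is basic one has $\phi(\xi)\,=\,0$ (see Section \ref{secHarm}), whence $D_{\xi}\,=\,\nabla_{\xi}$. For the associated b-Higgs bundle the relevant partial $\mathcal F_{\xi}$-connection is the $\xi$-component of its canonical connection $\nabla^{h}$; but the harmonic metric makes $\nabla$ unitary with $(0,\,1)$-part $\nabla^{\prime\prime}\,=\,\overline\partial_{E}$, so $\nabla$ is precisely the Chern connection, i.e.\ $\nabla^{h}\,=\,\nabla$ and hence $\nabla^{h}_{\xi}\,=\,\nabla_{\xi}$. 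Thus the flat and the Higgs structures induce the same $\Phi^{E}$, and a semi-simple flat bundle is regular if and only if the associated polystable b-Higgs bundle is regular. Finally, regularity passes to basic sub-bundles, and via the $\Phi^{E}$-invariant metric of Lemma \ref{lemSinv} to their orthogonal complements and hence to extensions, so the completions $\widehat{(-)}$ remain inside the regular subcategories. This establishes (B), and the composite of (A), (B) and (C) proves the theorem.
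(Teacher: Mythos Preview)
Your approach is essentially the same as the paper's: the paper derives Theorem \ref{EQQUAOR} directly from Section \ref{Orbsec} (which contains both Corollary \ref{regflll} and the equivalence between regular b-Higgs bundles and Higgs orbibundles) together with the regular version of Theorem \ref{EQQUA}, which the paper notes immediately afterwards. Your argument is more detailed---in particular your verification that the harmonic correspondence preserves regularity, via $D_\xi=\nabla_\xi=\nabla^h_\xi$ since $\phi(\xi)=0$ for a basic metric---is a point the paper leaves implicit, but the overall route is the same.
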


In Theorem \ref{EQQUA}, we can replace the assumption on quasi-regularity by the assumption of regularity.

Restricting the equivalence in Theorem \ref{EQQUA} to the b-holomorphic bundles (i.e., setting $\theta\,=\,0$), by Remark \ref{KHS}
we have the following:

\begin{corollary}
For a quasi-regular compact Sasakian manifold $(M,\, (T^{1,0}M,\, S,\, I),\, (\eta,\, \xi))$, there exists an equivalence of
categories between the following two:
\begin{itemize}
\item the category of flat bundles $(E, \,D)$ which are quasi-regular basic bundles and admit a filtration of sub-flat bundles
$$
0\, \subset\, E_1\, \subset\, \cdots \, \subset\, E_{\ell-1}\, \subset\, E_\ell \,=\, E
$$
satisfying the conditions that the flat bundle $(E_i/E_{i-1},\, D)$ is unitary, and

\item the category of quasi-regular b-holomorphic bundles $E$ admitting filtration of b-holomorphic sub-bundles
$$
0\, \subset\, E_1\, \subset\, \cdots \, \subset\, E_{\ell-1}\, \subset\, E_\ell \,=\, E
$$
such that the holomorphic bundle $E_i/E_{i-1}$ is stable and $$c_{1,B}(E_i/E_{i-1})\,=0\,\qquad{\rm and} \qquad 
\int_{M} c_{2,B}(E_i/E_{i-1})\wedge(d\eta)^{n-2}\wedge\eta\,=\,0$$ for all $1\, \leq\, i\, \leq\, \ell$.
\end{itemize}
\end{corollary}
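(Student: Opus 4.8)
The plan is to derive this corollary by restricting the equivalence of Theorem~\ref{EQQUA} to the full subcategories singled out by the condition $\theta\,=\,0$, and to identify the resulting flat-side objects by means of Remark~\ref{KHS}. Throughout, quasi-regularity is part of the data on both sides and is therefore automatically preserved.

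First I would treat the b-Higgs side. When $\theta\,=\,0$ a b-Higgs bundle $(E,\,0)$ is nothing but a b-holomorphic bundle $E$; the defining condition $\theta(\mathcal V)\,\subset\,\mathcal V\otimes\Omega_{B}$ for a sub-b-Higgs sheaf is then vacuous, so the sub-b-Higgs bundles of $(E,\,0)$ are exactly the b-holomorphic sub-bundles, and by the definition recalled in Remark~\ref{KHS} the stability of $(E_i/E_{i-1},\,0)$ is precisely the stability of the b-holomorphic bundle $E_i/E_{i-1}$. Hence the full subcategory of the Dolbeault side of Theorem~\ref{EQQUA} consisting of the objects with $\theta\,=\,0$ is exactly the second category in the present statement.

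Next I would identify the corresponding flat bundles. In the harmonic-bundle formalism underlying Theorem~\ref{EQQUA}, a polystable b-Higgs bundle $(E,\,\theta)$ with $c_{1,B}(E)\,=\,0$ and $\int_{M}c_{2,B}(E)\wedge(d\eta)^{n-2}\wedge\eta\,=\,0$ corresponds to a semi-simple flat bundle $(E,\,D)$ whose canonical decomposition is $D\,=\,\nabla+\theta+\overline\theta$, where $\overline\theta$ is the $h$-adjoint of $\theta$ as in \eqref{tst}. Thus $\theta\,=\,0$ forces $\overline\theta\,=\,0$, so $\phi\,=\,\theta+\overline\theta\,=\,0$ and $D\,=\,\nabla$ is a unitary flat connection; conversely a unitary flat bundle has $\phi\,=\,0$, hence $\theta\,=\,0$. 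Therefore the polystable b-holomorphic bundles with the stated Chern-class vanishing match exactly the unitary flat bundles, which is precisely the content of Remark~\ref{KHS}. Combining this with the previous paragraph, the $\theta\,=\,0$ atoms on the Dolbeault side correspond to the unitary atoms on the de Rham side.

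Finally I would transport the filtrations. The equivalence in Theorem~\ref{EQQUA} is the completion $E^{0}(\widehat F)$ of the equivalence on the polystable/semi-simple atoms, and since $F$ induces isomorphisms on the cohomology of the Hom-complexes, Proposition~\ref{exdg} ensures that it carries successive extensions to successive extensions, matching sub-objects with sub-objects and graded pieces with graded pieces. Restricting $E^{0}(\widehat F)$ to the subcategories generated under successive extension by the $\theta\,=\,0$ atoms on the Dolbeault side and by the unitary atoms on the de Rham side then yields the asserted equivalence: a quasi-regular b-holomorphic bundle filtered by b-holomorphic sub-bundles with stable Chern-trivial graded pieces is matched with a quasi-regular basic flat bundle filtered by sub-flat bundles with unitary graded pieces. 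The point demanding care is exactly this last matching: one must check that the filtration by sub-flat bundles on the de Rham side corresponds to the filtration by b-holomorphic sub-bundles on the Dolbeault side, i.e. that the extension classes living in $H^{1}$ of the Hom-complexes transport correctly, so that an object lies in the restricted flat-side subcategory if and only if its image lies in the restricted Higgs-side subcategory. This is precisely where the cohomology-isomorphism hypothesis of Proposition~\ref{exdg}, already verified for Theorem~\ref{EQQUA}, supplies the essential input; the remainder is bookkeeping.
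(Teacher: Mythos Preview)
Your proposal is correct and follows exactly the paper's approach: the paper simply says that the corollary is obtained by restricting the equivalence of Theorem~\ref{EQQUA} to the b-holomorphic bundles (i.e., setting $\theta=0$) and invoking Remark~\ref{KHS}. You have merely unpacked the details that the paper leaves implicit---the identification of the $\theta=0$ atoms with unitary flat bundles via the harmonic decomposition, and the transport of filtrations through the DG-category completion underlying Theorem~\ref{EQQUA}.
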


\section{Numerically flat vector bundles}

Let $E$ be a b-holomorphic vector bundle over a compact Sasakian manifold $$(M,\, (T^{1,0}M,\, S,\, I),\, (\eta,\, \xi)).$$
Given a basic Hermitian metric $h$ on $E$, we say that 
a section $\Theta\,\in\, A_{B}^{1,1}(M, {\rm End}(E))$ is
$$
\Theta \, \geq\, 0
$$
in the sense of Griffiths if writing $\Theta\,=\,\sqrt{-1}\sum a_{jk\lambda\mu}dz_{j}\wedge d\overline{z}_{k} \otimes
e_{\lambda}^{\ast}\otimes e_{\mu}$ for local transversely holomorphic coordinates $(z_{1},\,\cdots,\,z_{n},\, t)$ of $(M,{\mathcal F}_{\xi})$
and a local orthonormal frame $\{e_{\lambda}\}$, we have
$$\sum a_{jk\lambda\mu}X_{j}\overline{X}_{k} v_{\lambda }\overline{v}_{\mu}\,\geq\, 0$$
for every $x\,\in\, M$, $X\,\in\, T_{x}M$ and $v\,\in\, E_{x}$.

\begin{definition}\label{nef}
A holomorphic vector bundle $E$ over a compact Sasakian manifold is called \textit{numerically effective} ({\it nef} for short) if there is
a sequence of basic Hermitian metrics $h_{m}$ on the $m$-the symmetric powers $S^{m}E$ such that for every $\epsilon\, >\, 0$ and $m\,\ge\, 
m_{0}(\epsilon)$,
$$
R^{\nabla^{h_{m}}}(S^{m}E) \, \geq\, - m\epsilon d\eta\otimes {\rm Id}_{S^{m}E}
$$
in the sense of Griffiths.
\end{definition}

We obtain the following results by the same proofs as in \cite[Proposition 1.14, Proposition 1.15]{DPS}.

\begin{itemize}
\item If b-holomorphic vector bundles $E_{1},\, E_{2}$ over a compact Sasakian manifold are nef, then the tensor product $E_{1}\otimes E_{2}$ is 
also nef.

\item For a nef b-holomorphic vector bundle $E$ over a compact Sasakian manifold,
every tensor power $E^{\otimes m}$, $m\, \geq\, 1$, of $E$ is nef.

\item Given a short exact sequence of b-holomorphic vector bundles over a compact Sasakian 
manifold
$$
0\,\longrightarrow\, E_{1}\,\longrightarrow\, E_{2}\,\longrightarrow\, E_{3}
\,\longrightarrow\, 0\, ,$$
we have the following:
\begin{enumerate}
\item if $E_{2}$ is nef, then $E_{3}$ is nef;
\item if $E_{1}$ and $E_{3}$ are nef, then $E_{2}$ is nef;
\item if $E_{2}$ and $\det E^*_{3}$ are nef, then $E_{1}$ is nef.
\end{enumerate}
\end{itemize}

\begin{proposition}\label{nonvan}
Let $E$ be a b-holomorphic vector bundle over a compact Sasakian manifold $(M,\, 
(T^{1,0}M,\, S,\, I),\, (\eta,\, \xi))$. Suppose that $E$ is nef. For any non-zero $\sigma 
\in A^{0}_{B}(M,\, E^*)$ with $\overline\partial_{E^{\ast}} \sigma\,=\,0$,
the section $\sigma$ does not vanish anywhere on $M$.
\end{proposition}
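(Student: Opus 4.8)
The plan is to argue by contradiction, adapting the Demailly--Peternell--Schneider argument to the transverse (basic) category and combining the limiting metrics that define nef-ness with a Lelong-number semicontinuity argument. Suppose $\sigma$ vanishes at some $x_0\in M$, and let $k\ge 1$ be its order of vanishing there, computed in a basic holomorphic frame and local transverse holomorphic coordinates $(z_1,\dots,z_n)$. For each $\epsilon>0$ I choose $m\ge m_0(\epsilon)$ and a basic Hermitian metric $h_m$ on $S^mE$ with $R^{\nabla^{h_m}}(S^mE)\ge -m\epsilon\, d\eta\otimes\mathrm{Id}$ in the sense of Griffiths, as furnished by Definition \ref{nef}. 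The symmetric power $s_m:=\mathrm{Sym}^m\sigma$ is then a nonzero basic holomorphic section of $(S^mE)^{*}$ which vanishes at $x_0$ to order $mk$.

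Next I would transport the curvature hypothesis to the dual bundle. The dual metric $h_m^{*}$ on $(S^mE)^{*}$ has curvature $R^{h_m^{*}}=-(R^{h_m})^{t}$, so the Griffiths bound flips to $\sqrt{-1}R^{h_m^{*}}\le m\epsilon\, d\eta\otimes\mathrm{Id}$. The classical Bochner--Lelong inequality for a holomorphic section, carried out in a transverse chart, then gives on $\{\sigma\ne 0\}$
\[
\sqrt{-1}\,\partial_B\overline\partial_B\log|s_m|^{2}_{h_m^{*}}\ \ge\ -\frac{\langle\sqrt{-1}R^{h_m^{*}}s_m,\,s_m\rangle}{|s_m|^{2}}\ \ge\ -m\epsilon\, d\eta .
\]
Setting $\psi_m:=\tfrac1m\log|s_m|^{2}_{h_m^{*}}$, and rescaling $h_m$ by a constant so that $\sup_M\psi_m=0$, this reads $\sqrt{-1}\,\partial_B\overline\partial_B\psi_m\ge -\epsilon\, d\eta$. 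Since locally $\log|s_m|^{2}_{h_m^{*}}$ differs from $\log\sum_i|s_{m,i}|^{2}$ by a smooth function, $\psi_m$ is in fact a basic $\epsilon\,d\eta$-plurisubharmonic function on all of $M$, so the inequality holds as basic currents across the zero locus of $\sigma$.

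Now I would let $\epsilon=\epsilon_m\to 0$. The family $\{\psi_m\}$ consists of basic functions with $\sup_M\psi_m=0$ and $\sqrt{-1}\,\partial_B\overline\partial_B\psi_m\ge -\epsilon_m\, d\eta$; by the $L^1$-compactness of such quasi-plurisubharmonic families (applied chart-by-chart in the transverse holomorphic structure) a subsequence converges in $L^1$ to a basic function $\psi_\infty$ with $\sqrt{-1}\,\partial_B\overline\partial_B\psi_\infty\ge 0$. A basic plurisubharmonic function that is bounded above on the compact connected manifold $M$ is constant by the maximum principle, so $\psi_\infty\equiv\mathrm{const}$ and in particular $\nu(\psi_\infty,x_0)=0$. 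On the other hand, since $s_m$ vanishes to order $mk$ at $x_0$, in a transverse chart $\psi_m$ behaves like $2k\log|z|$, so the Lelong number $\nu(\psi_m,x_0)=2k$ for every $m$. By Demailly's upper semicontinuity of Lelong numbers under $L^1$-convergence of (quasi-)plurisubharmonic functions, $\nu(\psi_\infty,x_0)\ge\limsup_m\nu(\psi_m,x_0)=2k>0$, contradicting $\nu(\psi_\infty,x_0)=0$. Hence $\sigma$ cannot vanish anywhere.

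The main obstacle is not the curvature bookkeeping but the pluripotential theory in the basic/transverse category: one must verify that the three analytic inputs — the global $\epsilon\,d\eta$-plurisubharmonicity of $\psi_m$ across the transversely analytic zero set, the $L^1$-compactness of the normalized family $\{\psi_m\}$, and Demailly's semicontinuity of Lelong numbers — each go through for \emph{basic} functions and produce a \emph{basic} limit compatible with the transverse K\"ahler form $d\eta$. All three statements are local in the transverse coordinates $(z_1,\dots,z_n)$ and involve only basic objects, so they should reduce to the classical statements on polydiscs in $\C^{n}$; the care lies in checking this reduction, and in particular in confirming that the maximum principle forcing $\psi_\infty$ to be constant is valid for basic subharmonic functions on the compact foliated manifold $(M,\mathcal F_\xi)$.
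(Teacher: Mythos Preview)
Your proposal is correct and follows essentially the same Demailly--Peternell--Schneider strategy as the paper: pass to $\psi_m=\tfrac1m\log\|\sigma^m\|^2_{h_m^*}$, use the Griffiths bound to get $\sqrt{-1}\,\partial_B\overline\partial_B\psi_m\ge -\epsilon\,d\eta$, and derive a Lelong-number contradiction at a putative zero. The only organizational difference is that the paper works with the currents $T_m=-\tfrac{1}{2\pi\sqrt{-1}}\partial_B\overline\partial_B\psi_m$ directly and shows $T_m\to 0$ weakly via the mass bound $\int_M(T_m+\epsilon\,d\eta)\wedge(d\eta)^{n-1}\wedge\eta=\epsilon\int_M(d\eta)^n\wedge\eta$, then reduces to transverse complex coordinates using the local model $\eta=dt+\sqrt{-1}(\partial_B K-\overline\partial_B K)$; this sidesteps your normalization and maximum-principle step but lands at the same Lelong-number argument.
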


\begin{proof}
The main idea of proof is same as that of the proof of \cite[Proposition 1.16]{DPS}. Consider the 
topological dual $(A^{n-p,n-p}_{B}(M))^*$. We treat $(A^{n-p,n-p}_{B}(M))^*$ as 
$(p,\,p)$-currents like in the complex case. The
positivity of $T\in (A^{n-p,n-p}_{B}(M))^*$ is defined in
the same manner as done in \cite[Chapter III]{De}.

Take the Hermitian metrics $h_{m}$ as in Definition \ref{nef}, and let $h^*_{m}$ be the
corresponding dual metrics on $S^{m}E^*$.
Let
$$T_{m}\,=\,-\frac{1}{2\pi\sqrt{-1}} \partial_{B}\overline{\partial}_{B} \frac{1}{m}\log \Vert \sigma^{m}\Vert_{h_{m}^{*}}.
$$
Then, using the function
$$T_{m}(\alpha)\,=\,\int_{M} T_{m}\wedge \alpha\wedge\eta, \qquad \alpha\,\in\, A^{n-1,n-1}_{B}(M)$$
we regard $T_{m}\,\in\, (A^{n-1,n-1}_{B}(M))^*$.
Now by the same argument as in the proof of \cite[Proposition 1.16]{DPS},
for $m\,\ge\, m_{0}(\epsilon)$, the
current $T_{m}+\epsilon d\eta\in (A^{n-1,n-1}_{B}(M))^*$ is positive, and 
\[\int_{M}(T_{m}+\epsilon d\eta)\wedge (d\eta)^{n-1}\wedge\eta =\epsilon\int_{M} (d\eta)^{n}\wedge\eta.
\]
By the analogy with the standard relation between the mass measure and the trace measure
of a positive current (see \cite[Proposition 1.14, Definition 1.21]{De}), we conclude that
$T_{m}$ converges weakly to zero.

For a Sasakian manifold $(M,\, (T^{1,0}M,\, S,\, I),\, (\eta,\, \xi))$,
Considering K\"ahler potentials of $d\eta$, we can take local coordinates $(z_{1},\,\cdots,
\,z_{n}, t)$ satisfying the following conditions (see \cite{GKN}): 
\begin{itemize}
\item $\xi\,=\,\frac{\partial}{\partial t}$, and

\item there exists a real valued local basic function $K$
such that 
$$\eta\,=\,dt + \sqrt{-1}\sum_{j}\left( \frac{\partial K}{\partial z_j} dz_{j}-
\frac{\partial K}{\partial \overline{z}_j} d\overline{z}_{j}\right).$$
\end{itemize}
With respect to such coordinates, each $\alpha\,\in\, A^{n-1,n-1}_{B}(M)$ is regarded
as a $(n-1,\,n-1)$-form for $(z_{1},\,\cdots,\,z_{n})$ and 
$\int T_{m}\wedge \alpha\wedge\eta\,=\,\int_{z} T_{m}\wedge \alpha\int dt$.
Thus, the restrictions of $T_{m}$ can be seen as currents with complex variables
$(z_{1},\,\cdots,\,z_{n})$. Hence, the
argument on the Lelong number in the proof of \cite[Proposition 1.16]{DPS} is valid,
and the proposition follows.
\end{proof}

\begin{definition}
A b-holomorphic vector bundle $E$ over a compact Sasakian manifold 
is called \textit{numerically flat} if both $E$ and $E^*$ are nef.
\end{definition}

\begin{theorem}\label{nflatst}
Let $E$ be a quasi-regular b-holomorphic vector bundle on a quasi-regular compact Sasakian manifold $(M,\, (T^{1,0}M,\, S,\, I),\, (\eta,\, \xi))$.
If $E$ is numerically flat, then $E$ admits a filtration 
$$
0\, \subset\, E_1\, \subset\, \cdots \, \subset\, E_{\ell-1}\, \subset\, E_\ell \,=\, E
$$
of b-holomorphic sub-bundles such that for each $i$, the b-holomorphic bundle $E_i/E_{i-1}$ 
is stable and $c_{1,B}(E_i/E_{i-1})\,=\,0$.
\end{theorem}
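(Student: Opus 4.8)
The plan is to adapt the Demailly--Peternell--Schneider argument \cite{DPS} for the structure of numerically flat bundles to the basic setting, with the non-vanishing result Proposition \ref{nonvan} as the central tool and quasi-regularity supplying the basic Hermitian metrics that make the slope machinery available. First I would record two reductions. Since both $E$ and $E^{\ast}$ are nef, each has non-negative degree (a nef b-holomorphic bundle has $\deg\,\ge\,0$, directly from the curvature bound in Definition \ref{nef}), and therefore $\deg(E)\,=\,0$; as $\det E$ is then a nef basic line bundle of degree zero, the transverse hard Lefschetz and Hodge-index theory of the Sasakian metric \cite{EKA} forces $c_{1,B}(E)\,=\,0$. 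Next, because every b-holomorphic quotient of a nef bundle is again nef, any sub-b-Higgs sheaf $\mathcal V$ (with $\theta\,=\,0$) has $\deg(\mathcal V)\,\le\,0\,=\,\deg(E)$, so $E$ is semistable of slope $0$. Here the hypothesis of quasi-regularity is exactly what guarantees, through Lemma \ref{lemSinv} and Corollary \ref{det2}, that $E$ and the determinants $\det(\mathcal V)$ of all subsheaves involved carry basic Hermitian metrics; this makes their degrees genuine real numbers and the notion of (semi)stability meaningful, which is why the hypothesis cannot be dropped.

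The argument then proceeds by induction on $\mathrm{rk}(E)$. If $E$ is stable there is nothing to do, taking $\ell\,=\,1$, since $c_{1,B}(E)\,=\,0$. If $E$ is not stable, I would choose a saturated b-holomorphic subsheaf $E_{1}\,\subset\, E$ of slope $0$ of minimal positive rank, so that $E_{1}$ is stable and $Q\,:=\,E/E_{1}$ is torsion-free of slope $0$. Being a quotient of the nef bundle $E$, the sheaf $Q$ is nef, hence $c_{1,B}(Q)\,=\,0$ by the degree-zero-plus-nef argument above, and so $c_{1,B}(E_{1})\,=\,0$ as well. Moreover $E_{1}$ and $Q$ are again quasi-regular, being a sub-bundle and a quotient of the quasi-regular bundle $E$, so the inductive hypothesis will apply once they are known to be numerically flat b-holomorphic bundles.

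The crux, and the step I expect to be the main obstacle, is to show that $E_{1}$ is a genuine b-holomorphic sub-bundle, equivalently that $Q$ is locally free with no singularities in transverse codimension $\ge 2$, and that numerical flatness passes to $E_{1}$ and $Q$. This is precisely where Proposition \ref{nonvan} does the work, exactly as \cite[Proposition 1.16]{DPS} is used in the proof of \cite[Theorem 1.18]{DPS}: the nowhere-vanishing of basic holomorphic sections of the dual of a nef bundle forces the relevant sheaf inclusions to be sub-bundle inclusions, ruling out the degeneracy locus of $Q$. Once $Q$ is locally free, the exact-sequence properties of nef bundles recorded before Proposition \ref{nonvan} complete the bookkeeping. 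Applying these to $0\,\to\, E_{1}\,\to\, E\,\to\, Q\,\to\, 0$ and to its dual $0\,\to\, Q^{\ast}\,\to\, E^{\ast}\,\to\, E_{1}^{\ast}\,\to\, 0$: since $c_{1,B}(E_{1})\,=\,c_{1,B}(Q)\,=\,0$, the line bundles $\det(Q)^{\ast}$ and $\det(E_{1})$ are numerically trivial and hence nef, so property (3) gives that $E_{1}$ and $Q^{\ast}$ are nef, while property (1) gives that $E_{1}^{\ast}$ and $Q$ are nef; thus $E_{1}$ and $Q$ are both numerically flat. Feeding $E_{1}$ and $Q$ into the induction and concatenating the resulting filtrations yields the desired filtration of $E$ by b-holomorphic sub-bundles whose graded pieces are stable with vanishing first basic Chern class.
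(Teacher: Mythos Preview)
Your overall architecture is right and matches the paper's adaptation of \cite{DPS}: show $\deg(E)=0$, pick a minimal-rank saturated subsheaf $E_{1}$ of degree zero, prove $c_{1,B}(E_{1})=0$, use Proposition~\ref{nonvan} to upgrade the subsheaf to a sub-bundle, and induct. The difficulty is concentrated exactly where you locate it. However, the sentence ``Being a quotient of the nef bundle $E$, the sheaf $Q$ is nef, hence $c_{1,B}(Q)=0$ by the degree-zero-plus-nef argument above'' hides a genuine gap.

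Nefness in Definition~\ref{nef} is a condition on smooth basic metrics on symmetric powers of a \emph{bundle}; it is not defined for the sheaf $Q$. What you can do is pass to $\det(E_{1})^{\ast}$, which is an honest b-holomorphic line bundle on all of $M$ and, outside the singular set $S$ where $E_{1}\subset E$ fails to be a sub-bundle, is a quotient of the nef bundle $\bigwedge^{p}E^{\ast}$. The induced quotient metrics $h_{m}$ have curvature $\ge -\epsilon\,d\eta$ only on $M\setminus S$ and may blow up along $S$, so they do not give you the smooth global representatives your Hodge-index argument needs. In other words, the step ``nef of degree zero $\Rightarrow c_{1,B}=0$'' works for $\det E$ (a bundle with global smooth nef metrics) but does not transfer to $\det E_{1}$ without additional work. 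This is precisely the point where DPS themselves need the positive-current machinery.

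The paper supplies this missing piece by a basic-currents argument. It regards each $R^{\nabla^{h_{m}}}(\det(V)^{\ast})$ as an element of $(A^{n-1,n-1}_{B}(M))^{\ast}$, observes that $R^{\nabla^{h_{m}}}+\epsilon\,d\eta$ is a positive basic current with total mass controlled by $\epsilon$, and hence that $R^{\nabla^{h_{m}}}$ converges weakly to zero. To turn this into the vanishing of the \emph{smooth} basic cohomology class, the paper proves that the inclusion $A^{\ast}_{B}(M)\hookrightarrow D^{\ast}_{B}(M)$ into basic currents is a quasi-isomorphism, by comparing the Gysin-type short exact sequences for $A^{\ast}(M)^{\xi}$ and $D^{\ast}(M)^{\xi}$ and applying the five lemma. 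Only then does $[R^{\nabla^{h_{m}}}]=[R^{\nabla^{\widetilde{h}}}]$ in current cohomology force $c_{1,B}(\det(\mathcal{V}))=0$ in $H^{2}_{B}(M)$; one then makes $\det(\mathcal V)$ unitary flat and applies Proposition~\ref{nonvan} to the resulting global section of $\bigwedge^{p}E\otimes\det(\mathcal V)^{\ast}$. Once $c_{1,B}(E_{1})=0$ is established in this way, the rest of your plan (the exact-sequence bookkeeping for nefness and the induction) agrees with the paper.
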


\begin{proof}
Since $E$ is nef, it follows that $\det(E)$ is also nef; recall that
$E^{\otimes r}$ is nef and $\det(E)$ is a direct summand of $E^{\otimes r}$, where
$r\,=\, {\rm rank}(E)$.
Hence, by the definition of nefness, we conclude that
$${\rm deg}(E)\,=\,\int_{M}c_{1,B}(\det(E))\wedge(d\eta)^{n-1}\wedge \eta\,\ge\, 0.$$
Since $E^*$ is nef, we also have $-{\rm deg}(E)\,=\,\int_{M}c_{1,B}(\det(E)^*)
\wedge(d\eta)^{n-1}\wedge \eta\,\ge\, 0$.
These together imply that ${\rm deg}(E)\,=\,0$.

Let ${\mathcal V}\subset {\mathcal O}_{B}(E)$ be a reflexive subsheaf of minimal rank $p\,>\,0$
such that the quotient ${\mathcal O}_{B}(E)/\mathcal V$ is torsion-free
and also ${\rm deg}({\mathcal V})\,=\,0$.
By Lemma \ref{det}, the holomorphic line bundle ${\rm det}(\mathcal V)$ is quasi-regular, in
particular it admits a basic Hermitian metric $\widetilde{h}$.
On the other hand, there is a transversely analytic sub-variety $S\,\subset\, M$ of complex 
co-dimension at least 3 such that ${\mathcal V}$ is given by a b-holomorphic 
sub-bundle $V\,\subset\, E$ on the complement $M\setminus S$.
By the nefness of $E^*$, there exists basic Hermitian metrics $h_{m}$ on
${\rm det}(V)^{\ast}$ such that for every $\epsilon\, >\, 0$ and $m\,\ge\, m_{0}(\epsilon)$,
$$
R^{\nabla^{h_{m}}}({\rm det}(V)^*) \, \geq\, - \epsilon d\eta
$$
on $M\setminus S$. From the above condition that
${\rm deg}({\mathcal V})\,=\,0$, we have
$$-{\rm deg}({\mathcal V})=-\int_{M-S}R^{\nabla^{h_{m}}}({\rm det}(V)^*)\wedge(d\eta)^{n-1}
\wedge \eta\,=\,0.$$
As in the proof of Proposition \ref{nonvan}, regarding $R^{\nabla^{h_{m}}}({\rm det}(V)^*)
\,\in\, (A^{n-1,n-1}_{B}(M))^*$, we conclude that $R^{\nabla^{h_{m}}}$ converges weakly to zero.

It is known that the topological dual $(A^{2n-r}_{B}(M))^{\ast}$ is identified with the space
of basic currents, i.e., currents $T\,\in\, (A^{2n+1-r}(M))^{\ast}$ satisfying $i_{\xi}T
\,=\,0\,=\, {\mathcal L}_{\xi}T$ (see \cite{AE}).
Consider the cochain complex $D^{\ast}(M)\,=\,(A^{2n+1-\ast}(M))^{\ast} $ with the inclusion
map $A^{\ast}(M)\,\hookrightarrow\, D^{\ast}(M)$.
Then $$D^{\ast}_{B}(M)\,=\,(A^{2n-\ast}_{B}(M))^{\ast}\,\subset\, D^{\ast}(M)$$ is a sub-complex.
By the same way as done in \cite[Section 7.2]{BoG}, we have the short exact sequence
\[\xymatrix{
0\ar[r]&D^{\ast}_{B}(M)\ar[r]&D^{\ast}(M)^{\xi}\ar[r]&D^{\ast-1}_{B}(M)\ar[r]&0
}
\]
where $D^{\ast}(M)^{\xi}$ is the sub-complex of $D^{\ast}(M)$ consisting
$T\,\in \,D^{\ast}(M)$ satisfying ${\mathcal L}_{\xi}T\,=\,0$. This short exact sequence
produces a long exact sequence of cohomologies
\[\xymatrix{
\dots \ar[r]&H^{r}(D^{\ast}_{B}(M))\ar[r]&H^{r}(D^{\ast}(M))\ar[r]&H^{r-1}(D^{\ast}_{B}(M))\ar[r]&H^{r+1}(D^{\ast}_{B}(M))\ar[r]&\dots ;
}
\]
note that we have $H^{\ast}(D^{\ast}(M)^{\xi})\,\cong\, H^{\ast}(D^{\ast}(M))$ by averaging
with respect to the $S^{1}$-action.
Since the inclusion map $A^{\ast}(M)\,\hookrightarrow\, D^{\ast}(M)$ induces an
isomorphism of cohomology, and obviously $H^{0}_{B}(M) \,\cong\, H^{0}(D^{\ast}_{B}(M))$,
applying the five lemma to the diagram 
\[\xymatrix{
\dots \ar[r]&H^{r}_{B}(M)\ar[r]\ar[d]&H^{r}(M)\ar[r]\ar[d]&H^{r-1}_{B}(M)\ar[r]\ar[d]&H^{r+1}_{B}(M)\ar[r]\ar[d]&\dots\\
\dots \ar[r]&H^{r}(D^{\ast}_{B}(M))\ar[r]&H^{r}(D^{\ast}(M))\ar[r]&H^{r-1}(D^{\ast}_{B}(M))\ar[r]&H^{r+1}(D^{\ast}_{B}(M))\ar[r]&\dots
}
\]
induced by
\[\xymatrix{0\ar[r]&A^{\ast}_{B}(M)\ar[r]\ar[d]&A^{\ast}(M)^{\xi}\ar[r]\ar[d]&A^{\ast-1}_{B}(M)\ar[r]\ar[d]&0\\
0\ar[r]&D^{\ast}_{B}(M)\ar[r]&D^{\ast}(M)^{\xi}\ar[r]&D^{\ast-1}_{B}(M)\ar[r]&0,
}
\]
we obtain, inductively, that $H^{r}_{B}(M) \,\cong\, H^{r}(D^{\ast}_{B}(M))$ for any
integer $r$.
We have $[R^{\nabla^{h_{m}}}({\rm det}(V))]\,=\,[R^{\nabla^{\widetilde{h}}}({\rm det}
({\mathcal V}))]$ in $ H^{2}(D^{\ast}_{B}(M))$ and this implies
that $[R^{\nabla^{\widetilde{h}}}({\rm det}({\mathcal V}))]\,=\,
c_{1,B}({\rm det}({\mathcal V}))\,=\,0$ in $H^{2}_{B}(M)$.
By the same way as in the K\"ahler case,
changing $\widetilde{h}$ conformally, we can ensure that
$R^{\nabla^{\widetilde{h}}}({\rm det}({\mathcal V}))\,=\,0$,
and hence ${\rm det}({\mathcal V})$ is unitary flat.
 
By Proposition \ref{nonvan}, the canonical map ${\rm det}({\mathcal V})
\,\longrightarrow\, \bigwedge^{{\rm rk}({\mathcal V})}E$ is injective.
Hence, in the same way as done in \cite[Step 2. Proof of Theorem 1. 18]{DPS}, 
we conclude that ${\mathcal V}$ is given by a b-holomorphic 
sub-bundle $V\,\subset\, E$ on $M$ and $V$ is stable.
We also have $c_{1,B}(V)\,=\,c_{1,B}({\rm det}({\mathcal V}))\,=\,0$.
Indeed, \cite[Lemma 1. 20]{DPS} is purely local and the same argument goes
through in our basic holomorphic setting.
Thus, we can reduce $E$ to $E/V $ and the theorem follows inductively.
\end{proof}

\begin{corollary}\label{flatnflat}
 On a quasi-regular compact Sasakian manifold $(M,\, (T^{1,0}M,\, S,\, I),\, (\eta,\, \xi))$,
there is an equivalence of categories between the following two:
\begin{itemize}
\item The category of quasi-regular numerically flat b-holomorphic vector
bundles $E$ satisfying $\int_{M} c_{2,B}(E)\wedge(d\eta)^{n-2}\wedge\eta\,=\,0$.

\item The category of flat bundles $(E, \,D)$ which are quasi-regular basic bundles and
admit a filtration of sub-flat bundles
$$
0\, \subset\, E_1\, \subset\, \cdots \, \subset\, E_{\ell-1}\, \subset\, E_\ell \,=\, E
$$
such that the flat bundle $(E_i/E_{i-1},\,D)$ is unitary for every $1\, \leq\, i\, \leq\, 
\ell$.
\end{itemize}
\end{corollary}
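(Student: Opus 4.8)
The plan is to realize the category of quasi-regular numerically flat b-holomorphic bundles $E$ with $\int_{M}c_{2,B}(E)\wedge(d\eta)^{n-2}\wedge\eta=0$ as a full subcategory of quasi-regular b-holomorphic bundles that coincides, at the level of objects, with the second category appearing in the final corollary of the previous section (the restriction of Theorem \ref{EQQUA} to the case $\theta=0$, as in Remark \ref{KHS}). That corollary already identifies the latter with the category of flat quasi-regular basic bundles admitting a filtration with unitary quotients; so once the two subcategories of b-holomorphic bundles are shown to have the same objects — the morphisms agreeing automatically, numerical flatness being a property — the asserted equivalence follows by composition. Thus the whole task reduces to a single statement: a quasi-regular b-holomorphic bundle $E$ is numerically flat with $\int_{M}c_{2,B}(E)\wedge(d\eta)^{n-2}\wedge\eta=0$ if and only if it admits a filtration $0\subset E_{1}\subset\cdots\subset E_{\ell}=E$ by b-holomorphic sub-bundles whose successive quotients $G_{i}:=E_{i}/E_{i-1}$ are stable with $c_{1,B}(G_{i})=0$ and $\int_{M}c_{2,B}(G_{i})\wedge(d\eta)^{n-2}\wedge\eta=0$.

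For the forward implication I would apply Theorem \ref{nflatst} to the numerically flat bundle $E$, producing a filtration by b-holomorphic sub-bundles whose graded pieces $G_{i}$ are stable and satisfy $c_{1,B}(G_{i})=0$ in $H^{2}_{B}(M)$. The Whitney product formula for basic Chern classes gives, in $H^{\ast}_{B}(M)$,
\[
c_{2,B}(E)=\sum_{i}c_{2,B}(G_{i})+\sum_{i<j}c_{1,B}(G_{i})\wedge c_{1,B}(G_{j}),
\]
and the vanishing of every $c_{1,B}(G_{i})$ annihilates the cross terms, so that $\int_{M}c_{2,B}(E)\wedge(d\eta)^{n-2}\wedge\eta=\sum_{i}\int_{M}c_{2,B}(G_{i})\wedge(d\eta)^{n-2}\wedge\eta$. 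Since each $G_{i}$ is stable it carries a basic Hermite--Einstein metric by Theorem \ref{tthbk0}, so the Bogomolov--Gieseker type inequality \eqref{he} applies; as $c_{1,B}(G_{i})=0$, it reduces to $\int_{M}c_{2,B}(G_{i})\wedge(d\eta)^{n-2}\wedge\eta\ge 0$. A sum of non-negative numbers that vanishes must vanish termwise, so each $G_{i}$ satisfies the required $c_{2}$ condition, placing $E$ in the target subcategory.

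For the converse I would start from a filtration with stable quotients $G_{i}$ obeying $c_{1,B}(G_{i})=0$ and $\int_{M}c_{2,B}(G_{i})\wedge(d\eta)^{n-2}\wedge\eta=0$. For each $i$, Theorem \ref{tthbk0} supplies a Hermite--Einstein metric, and the equality case recorded just after \eqref{he} — that $c_{1,B}=0$ together with the vanishing of the $c_{2}$-integral forces $R^{D^{h}}=0$ — shows, since here $\theta=0$ and hence $D^{h}=\nabla^{h}$, that $G_{i}$ is Hermitian flat. A Hermitian flat bundle is nef, its induced metrics on all symmetric powers being flat and $-m\epsilon\,d\eta\le 0$ because $d\eta$ is a transverse K\"ahler form; its dual is again Hermitian flat, hence nef, so each $G_{i}$ is numerically flat. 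Invoking the permanence of nefness under extensions recorded before Proposition \ref{nonvan} — applied both to $0\to E_{i-1}\to E_{i}\to G_{i}\to 0$ and to its dual — and climbing the filtration inductively, I conclude that $E$ is numerically flat, while the same Whitney computation yields $\int_{M}c_{2,B}(E)\wedge(d\eta)^{n-2}\wedge\eta=0$.

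The decisive step is the forward implication, where a single global vanishing $\int_{M}c_{2,B}(E)\wedge(d\eta)^{n-2}\wedge\eta=0$ must be upgraded to the graded-piece vanishings. This rests entirely on the positivity $\int_{M}c_{2,B}(G_{i})\wedge(d\eta)^{n-2}\wedge\eta\ge 0$ coming from the Hermite--Einstein inequality \eqref{he}, which is precisely what converts a single annihilation into a termwise one; the remaining ingredients (the Whitney formula in basic cohomology, the equality case of \eqref{he}, and closure of numerical flatness under extensions) are formal. I would also verify at the outset that the filtration of Theorem \ref{nflatst} consists of genuine global b-holomorphic sub-bundles, so that \eqref{he} is applied to honest stable bundles rather than to sheaves — a point guaranteed by the statement of that theorem.
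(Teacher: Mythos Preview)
Your proposal is correct and follows precisely the route the paper intends: the corollary is stated without proof immediately after Theorem~\ref{nflatst}, and the implicit argument is exactly the one you spell out --- combine Theorem~\ref{nflatst} with the unnumbered corollary at the end of Section~5 (the $\theta=0$ restriction of Theorem~\ref{EQQUA}), using the Whitney formula and the Bogomolov--Gieseker inequality~\eqref{he} to pass from the global $c_{2}$-vanishing to the graded pieces, and the equality case of~\eqref{he} together with closure of nefness under extensions for the converse. Your explicit handling of the termwise $c_{2}$-vanishing via nonnegativity is the one substantive detail the paper leaves to the reader.
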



\end{document}